\documentclass[11pt,twoside,reqno]{amsart}
\usepackage{amsthm, amsmath, amscd, amssymb,centernot}
\usepackage{enumitem}
\usepackage[T1]{fontenc}
\usepackage[left=2.5cm,top=2.5cm,bottom=3cm,right=2.5cm]{geometry}
\usepackage[pagebackref]{hyperref}
\usepackage[capitalise]{cleveref}
\usepackage{tikz-cd}

\usepackage{listings}
\usepackage{xcolor}

\theoremstyle{definition}
\newtheorem{Defn}{Definition}[section]

\newtheorem{thm}[Defn]{Theorem}
\newtheorem*{thm*}{Theorem}
\newtheorem{prop}[Defn]{Proposition}

\newtheorem{lemma}[Defn]{Lemma}
\newtheorem{remark}[Defn]{Remark}
\newtheorem{exm}[Defn]{Example}

\author[Shilpa Rani]{Shilpa Rani}
 \address{Department of Mathematics, SRM University AP, Amaravati 522 240, Andhra Pradesh,India}
 \email{ranishilpa7377@gmail.com}
 \title{Surjectivity of certain Word maps in $SU(2)$ and $SL(2,\mathbb{C})$}
 
\begin{document}

    	\begin{abstract}
   	    In this article, we show the surjectivity  of word maps $w:SU(2) \times SU(2) \to SU(2)$ induced by several families of words in the free group of rank 2. Also, we prove the surjectivity of certain word maps on $SL(2,\mathbb{C})$.
   	\end{abstract}
\maketitle
\section{Introduction}

  Let \( \mathbf{F}_2 \) denote the free group on two generators \( a \) and \( b \), and let \( G \) be an arbitrary group. For any word \( w(a,b) \in \mathbf{F}_2 \), one defines a map $w : G \times G \to G, $ $ (x,y) \mapsto w(x,y)$,
called the \emph{word map} associated with the word \( w \).
	The study of word maps was initially motivated by the commutator maps and power maps which are some special cases of word maps that have a larger impact on the study of groups. The field has expanded to include a wide range of problems linking group theory, geometry, and number theory. Word maps reveal structural properties of groups and have led to several deep theorems, while many important questions and conjectures remain open, among which the surjectivity of word maps is a central problem. 
    
 	It can be easily seen that the image of a word map is invariant under conjugation and it always contains the identity element. In recent years, considerable progress has been made in the study of word maps, particularly, in the case of finite groups (see \cite{ML1,ML2,ML}). One of the central conjectures in this area is the \textit{Ore conjecture} \cite{OO}, which asserts that the word map corresponding to the commutator word is surjective for every non-abelian finite simple group. Significant contributions were made toward this conjecture by Thompson who proved it for $\text{PSL}(n,\mathbb{F}_{p})$ (see \cite{RCT}). The conjecture was eventually settled  in 2010 by M. W. Liebeck, E. A. O'Brien, A. Shalev, and P. H. Tiep \cite{MWLEAO}.  

    Beyond finite groups, related questions have also been studied in the setting of algebraic groups. In 2002-03, P. Chatterjee \cite{PC1, PC2} established that for a connected semisimple algebraic group, the power map is mostly surjective. Independently, Steinberg \cite{RS} obtained a similar result using a different method.   A notable result supporting the surjectivity of word maps is that the word map
$w : PSL(2,\mathbb{C}) \times PSL(2,\mathbb{C}) \to PSL(2,\mathbb{C})$
is surjective for all \( w \in \mathbf{F}_2^{(1)} \setminus \mathbf{F}_2^{(2)} \), where \( \mathbf{F}_2^{(n)} \) denotes the \(n\)th term of the derived series of \( \mathbf{F}_2 \) (see \cite{TBYGZ}).
Furthermore, the word map corresponding to any nontrivial double commutator word is surjective on \( PSL(2,K) \), where \( K \) is an algebraically closed field of characteristic \(0\) (see \cite{UTJS}).

	Another important result about the word maps is that the image of every word map is dense for a connected semisimple algebraic group over an algebraically closed field \cite{AB}. It can be easily seen that there are words for which the corresponding word maps are not surjective. 
	\begin{exm}
		Take $G=SL(2,\mathbb{C})$ and $w(a,b)=a^{2}$. The element $x=\begin{bmatrix} -1 & 1 \\ 0 & -1 \end{bmatrix}$ has no preimage in $SL(2,\mathbb{C})$.
	\end{exm}

    At the 2008 Spring Central Section Meeting of the AMS in Bloomington, 
    Michael Larsen posed the question that the word map for all $w\in \textbf{F}^{(n)}$ for sufficiently large	$n\in \mathbb{N}$ is surjective.  
	It is very easy to show that $w\notin \textbf{F}_{2}^{(1)}$, $w:SU(n)\times SU(n)\rightarrow SU(n)$ is surjective \cite[Lemma 2.1]{AEAT}. Goto \cite{MGo} and Toyama \cite{HTo} proved that the commutator word map on $SU(n)$ is surjective for all $n\in \mathbb{N}$.  

    Drawing motivation from the recent work of Vu The Khoi and Ho Minh Toan~\cite{VKHMT}, in which they establish the surjectivity of certain families of word maps on \( $SU(2)$ \), we investigate the surjectivity of related classes of word maps. The main results of this paper are the following.

   
\begin{thm}\label{A}
    
 The word map on \(SU(2)\) induced by the word	$w_n \in \mathbf{F}_{2}$	is surjective  whenever \(w_n\) takes one of the following forms:
		\begin{itemize}
			\item [(a)]	$	[[a,b],a^{n}[a,b]a^{-n}] $, $n\geq 1$
			\item [(b)]  $[[a,b],b^{n}[a,b]b^{-n}]$, $n\geq 1$
			\item [(c)]	$[[a,b],(ab)^{n}[a,b](ab)^{-n}]$, $n\geq 1$
			\item [(d)]	$[[a,b],a^{n}b[a,b]b^{-1}a^{-n}]$, $n\geq 2$
			\item[(e)] $[[a,b],a^{n}b^{2}[a,b]b^{-2}a^{-n}]$, $n\geq 1$
			\item [(f)] $[[a,b],a^{n}b^{m}[a,b]b^{-m}a^{-n}]$, $n\geq 2, m\geq 2 $. When $n\equiv 1,2,4,5 $ (mod $6$) and $m\equiv 0,2$ (mod $4$). 
            \item[(g)] $[[a,b],[a,(ab)^{n}]]$, $n\geq 2$.

		\end{itemize}
		
	\end{thm}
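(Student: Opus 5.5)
Every element of $SU(2)$ is conjugate to a diagonal matrix $\operatorname{diag}(e^{i\theta},e^{-i\theta})$, and the image of a word map is conjugation invariant. So it suffices to show that the continuous function $(x,y)\mapsto \operatorname{tr} w_n(x,y)$ on $SU(2)\times SU(2)$ attains every value in $[-2,2]$. Since $SU(2)\times SU(2)$ is connected and $w_n(1,1)=1$ gives the value $2$, the intermediate value theorem reduces everything to one task: for each word, exhibit a single pair $(x,y)$ with $\operatorname{tr} w_n(x,y)=-2$, i.e.\ $w_n(x,y)=-I$. Every word in the list has the shape $[u,v]$ with $u=[a,b]$ and $v$ either a conjugate $g u g^{-1}$ or (case (g)) another commutator. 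So I would use the classical fact, e.g.\ via quaternions, that two elements $u,v\in SU(2)$ satisfy $[u,v]=-I$ exactly when both are trace-zero (purely imaginary unit quaternions) and anticommute, i.e.\ are orthogonal as vectors in $\mathbb{R}^3$.

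\textbf{Cases (a)--(f).} Here $v=gug^{-1}$ with $g\in\{a^n,b^n,(ab)^n,a^nb,a^nb^2,a^nb^m\}$. The first requirement is $\operatorname{tr}[a,b]=0$. The second is that conjugation by $g$, which is a rotation of $\mathbb{R}^3$ by angle $2\phi_g$ (where $\operatorname{tr} g=2\cos\phi_g$), sends the axis $\hat u$ to an orthogonal vector.

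I would parametrize $a=\cos\alpha+\sin\alpha\,\mathbf{i}$ and $b=\cos\beta+\sin\beta(\cos\gamma\,\mathbf{i}+\sin\gamma\,\mathbf{j})$. Via the Fricke identity
\[
\operatorname{tr}[a,b]=\operatorname{tr}^2a+\operatorname{tr}^2b+\operatorname{tr}^2ab-\operatorname{tr}a\operatorname{tr}b\operatorname{tr}ab-2,
\]
the condition $\operatorname{tr}[a,b]=0$ becomes $\sin^2\alpha\sin^2\beta\sin^2\gamma=1/2$. The orthogonality condition is a trigonometric equation in $\alpha,\beta,\gamma,n,m$. In case (a) a natural choice is to make the axis of $a$ orthogonal to $\hat u$ and take $a^n$ to be a quarter turn, i.e.\ $n\alpha=\pi/4$ modulo $\pi/2$. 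One then checks that $\sin^2\alpha\sin^2\beta\sin^2\gamma=1/2$ can still be solved, which needs $\sin^2\alpha\ge 1/2$. For $n=1$ we take $\alpha=\pi/4$ or $3\pi/4$. For larger $n$ we pick $\alpha=(2k+1)\pi/(4n)$ with $k$ chosen so that $\alpha$ is near $\pi/2$. Case (b) is symmetric. Case (c) follows by the automorphism $a\mapsto ab$, which preserves $[a,b]$ up to conjugacy ($[ab,b]=a[b\ldots]$ conjugate-wise), so it reduces to case (a).

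For (d)--(f) I would first choose $b$ so that $b^m$ commutes with $u$ or acts simply, e.g.\ $b^m=\pm I$ when $\beta=\pi/m$ or $\pi/2\cdot$. This is where $m\equiv0,2\pmod 4$ enters: $b^m=\pm1$ is needed while still keeping $\sin^2\beta$ large. The problem then reduces to (a) with an extra constraint on $\alpha$. The congruences on $n$ modulo $6$ suggest that the good $\alpha$ are multiples of $\pi/3$-type angles, and that the construction $\alpha$ with $n\alpha\equiv \pm\pi/4$ must avoid $\sin\alpha$ being too small. Case (d) with $m=1$ (and (e) with $m=2$) needs a direct computation, possibly with a one-parameter family and a continuity argument for the orthogonality condition instead of an explicit solution.

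\textbf{Case (g).} I would set $c=ab$, so $b=a^{-1}c$ and $[a,b]=[a,a^{-1}c]=a a^{-1}c a^{-1}c^{-1}a=ca^{-1}c^{-1}a$, a conjugate of $[c^{-1},a]$-type. The task is then to find $a,c$ with $[a,b]$ and $[a,c^n]$ both trace zero and anticommuting. Choosing $c^n$ to be a quarter turn ($\operatorname{tr} c^n=0$) simplifies $[a,c^n]$. The main obstacle, both here and in (f), is solving the orthogonality equation explicitly for all admissible $n$. I expect to handle it by an intermediate value argument on an auxiliary one-variable function (for instance $\operatorname{tr}(uv)$ as a function of one angle, which must vanish), showing that it changes sign. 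The admissible residues in (f) would be where this sign change can be verified.
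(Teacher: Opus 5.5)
Your reduction is correct and is the paper's criterion in another form: $-I\in\operatorname{Im}w$ exactly when $u,v$ are trace-zero and anticommute, which is the condition $p_u=p_v=p_{uv}=0$ of Lemma~\ref{L2}. Your automorphism reductions of (b) and (c) to (a) are also fine; in fact $[ab,b]=[a,b]$ exactly.

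\textbf{The gap: your construction for (a) cannot exist, and (d)--(f) are built on it.} In your parametrization, the $\mathbf{i}$-component of $u=[a,b]$ is $\sin 2\alpha\,\sin^2\beta\sin^2\gamma$. On the trace-zero locus $\sin^2\alpha\sin^2\beta\sin^2\gamma=\tfrac12$ this equals $\cot\alpha$. So $\hat u\perp\mathbf{i}$ forces $\cos\alpha=0$, i.e.\ $a^2=-I$. Then $a^n\in\{\pm I,\pm a\}$ acts on $\hat u$ as $\pm\mathrm{id}$, never as a quarter turn. Your choice $n=1$, $\alpha=\pi/4$ gives $\cot\alpha=1$, so $u=\mathbf{i}$ commutes with $a$. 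Concretely, $a=(1+\mathbf{i})/\sqrt2$, $b=\mathbf{j}$ gives $[a,b]=\mathbf{i}$ and $w(a,b)=I$, not $-I$.

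The true orthogonality condition is $\cot^2\alpha+(1-\cot^2\alpha)\cos 2n\alpha=0$, i.e.\ $\cos^2\alpha=\cos2\alpha\cos2n\alpha$. With $x=2\cos\alpha$ this is exactly the paper's equation $1+(x^2-2)U_{n-1}(x/2)^2=0$. The real content of (a) is showing this has a root with $|x|\le\sqrt2$ for every $n$. The paper does this with explicit roots for $n=1,2,4$, and an intermediate value argument on $[0,\sqrt2]$ for odd $n$ and on $[2\cos\frac{(n-2)\pi}{2(n-1)},\sqrt2]$ for even $n>4$. Your treatment of (d) and (g) is only an expectation that some unspecified function changes sign. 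As written, none of the cases is actually proved.

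\textbf{What survives once (a) is repaired.} Given $k=0$, the (a)-condition depends only on $x$, and $k(x,y,z)=0$ with real $y,z$ forces $|x|\le\sqrt2$. So you may take $y=\operatorname{tr}b=0$ and $z=\sqrt{2-x^2}$. Then $b^2=-I$, hence $b^m=\pm I$ for every even $m$, and the words in (e) and (f) coincide with the word in (a) at that pair. This gives (e) and (f) for all $n\ge1$ and all even $m$. Note that $m\equiv 0,2\pmod 4$ just says $m$ is even. The mod-$6$ restriction comes only from the paper's particular point $(1,0,1)$: there $a^3=-I$, so $a^n$ is central when $n\equiv0,3\pmod6$. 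Your speculation about $\pi/3$-type angles is therefore unnecessary.

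For (d), take $\operatorname{tr}a=0$, so $a^n\in\{\pm I,\pm a\}$; this reduces to (b) or (c) with exponent $1$, which is what the paper's point $(0,1,1)$ does. For (g) you still need an actual solution. The paper takes $x=1$ and $z$ with $U_{n-2}(z/2)=0$: $z=0$ for odd $n$, and $z=2\cos\frac{(n-2)\pi}{2(n-1)}$ for even $n\ge4$. For $n=2$ it uses $(1,1,1)$.
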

    
		
	\begin{thm}\label{B}
	The word map $SL(2,\mathbb{C})\times SL(2,\mathbb{C})\rightarrow SL(2,\mathbb{C}) $ induced by any of the following words are surjective,	
		\begin{center}
				\begin{itemize}
				\item $w_{1}(a,b)=[[a,b],a[a,b]a^{-1}]$
				\item $w_{2}(a,b)=[[a,b],b[a,b]b^{-1}]$
				\item $w_{3}(a,b)=[[a,b],(ab)[a,b](ab)^{-1}]$
				\item $w_{4}(a,b)=[[a,b],ab^{2}[a,b]b^{-2}a^{-1}]$
				\item $w_{5}(a,b)=[[a,b],a].$
				
			\end{itemize}
		\end{center} 
	\end{thm}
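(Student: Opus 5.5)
The plan is to reduce surjectivity to a statement about trace polynomials, and then treat the exceptional traces $\pm 2$ separately. The image of each $w_i$ is invariant under conjugation. In $SL(2,\mathbb{C})$ a non-central element is determined up to conjugacy by its trace. The only classes not detected by the trace are $I$, $-I$, and the two non-semisimple classes $\pm\begin{bmatrix}1&1\\0&1\end{bmatrix}$. So it suffices to prove three things:
\begin{enumerate}[label=(\roman*)]
\item every $t\in\mathbb{C}$ occurs as $\operatorname{tr} w_i(A,B)$;
\item $-I$ lies in the image (the identity is trivially there);
\item for each $\varepsilon=\pm1$ there is a pair $(A,B)$ with $\operatorname{tr} w_i(A,B)=2\varepsilon$ and $w_i(A,B)\neq \varepsilon I$.
\end{enumerate}

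For (i), use the Fricke--Vogt description. Put $x=\operatorname{tr}A$, $y=\operatorname{tr}B$, $z=\operatorname{tr}AB$; then $(A,B)\mapsto(x,y,z)$ is onto $\mathbb{C}^3$. Write $c=[A,B]$, so that
\[
\kappa:=\operatorname{tr}c=x^2+y^2+z^2-xyz-2.
\]
For $w_1,\dots,w_4$ we have $w=[c,gcg^{-1}]$ with $g=A,\,B,\,AB,\,AB^2$ respectively. The commutator trace identity, together with $\operatorname{tr}c=\operatorname{tr}(gcg^{-1})=\kappa$, gives
\[
\operatorname{tr} w = 2\kappa^2+s^2-\kappa^2 s-2, \qquad s=\operatorname{tr}(c\,g c g^{-1}).
\]
I would expand $s$ as a polynomial in $x,y,z$ using $\operatorname{tr}(UV)+\operatorname{tr}(UV^{-1})=\operatorname{tr}U\operatorname{tr}V$. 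For $w_5=[c,A]$ the same identity gives $\operatorname{tr}w_5=\kappa^2+x^2+\operatorname{tr}(cA)^2-\kappa x\operatorname{tr}(cA)-2$, and $\operatorname{tr}(cA)$ is again polynomial in $x,y,z$. A nonconstant polynomial $P:\mathbb{C}^3\to\mathbb{C}$ is surjective: restrict it to a line on which it is nonconstant and apply the fundamental theorem of algebra. Hence (i) only requires checking that each $\operatorname{tr}w_i$ is nonconstant, for example by evaluating at two convenient points.

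For (ii), a commutator $[u,v]$ equals $-I$ exactly when $u,v$ have trace $0$ and anticommute, which is equivalent to $\operatorname{tr}u=\operatorname{tr}v=\operatorname{tr}(uv)=0$. For $w_1,\dots,w_4$ I would therefore solve the polynomial system $\kappa(x,y,z)=0$, $s(x,y,z)=0$. This is two equations in three unknowns, so it has solutions, and any solution gives $w_i=-I$. For $w_5$ the conditions become $\kappa=0$, $x=0$, $\operatorname{tr}(cA)=0$, three equations in three unknowns. Here I would solve explicitly, or else choose $A$ of order $4$ and $B$ so that $c$ anticommutes with $A$.

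Step (iii) is the main obstacle, because the trace polynomials cannot distinguish $\pm I$ from the unipotent classes. I would first try upper-triangular families. Taking $A,B$ upper triangular makes $c$ unipotent, and then $w_i$ is unipotent as well. The aim is to choose the parameters so that its off-diagonal entry is nonzero, which gives the class of $\begin{bmatrix}1&1\\0&1\end{bmatrix}$. Since $w_i$ is a double commutator this may vanish, so the fallback is a dimension argument. The fibre $\{\operatorname{tr}w_i=2\varepsilon\}$ is a hypersurface of dimension $5$ in $SL(2,\mathbb{C})^2$. If $w_i=\varepsilon I$ held on an entire component, that component would be contained in a proper subvariety. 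To rule this out, I would exhibit one explicit point where $\operatorname{tr}w_i=2\varepsilon$ and $w_i$ is not scalar. For $\varepsilon=-1$ a natural choice is a perturbation of the solution found in (ii), moving along the curve $\kappa=0$ while keeping $\operatorname{tr}w=-2$ but with $s\neq0$. For $w=[u,v]$, the condition $[u,v]=-I$ forces $\operatorname{tr}(uv)=0$, so $s\neq 0$ guarantees $w\neq -I$. The case $\varepsilon=+1$ is handled in the same way using a point where $c$ and $gcg^{-1}$ do not commute.
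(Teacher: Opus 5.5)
\textbf{Verdict: there is a genuine gap, in step (iii).} Your reduction to (i)--(iii) is the same as the paper's. Your route to (i) is a valid and more elementary substitute for the Bandman--Zarhin lemma: Fricke--Vogt surjectivity of $(x,y,z)$ onto $\mathbb{C}^3$, plus the fact that a nonconstant polynomial on $\mathbb{C}^3$ is onto. Step (iii), however, is the real content of the theorem (the paper spends its SageMath radical-membership computations exactly there), and the mechanisms you propose for it do not work.

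\emph{Upper-triangular pairs.} These cannot help for $w_1,\dots,w_4$. The Borel subgroup is metabelian, so every word in $\mathbf{F}_2^{(2)}$ is identically $I$ on it. For words in $\mathbf{F}_2^{(1)}$ they only produce upper unitriangular matrices, so never the class of $-\begin{bmatrix}1&1\\0&1\end{bmatrix}$. They settle only $w_5$ with $\varepsilon=1$.

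\emph{The case $\varepsilon=-1$.} Your proposed perturbation is impossible. On $\kappa=0$ your own formula reads $\operatorname{tr}w=s^2-2$, so $\operatorname{tr}w=-2$ forces $s=0$, i.e.\ $w=-I$. For $w_5$ one has $\operatorname{tr}(cA)=x(\kappa-1)$, hence $\operatorname{tr}w_5=\kappa^2+(2-\kappa)x^2-2$; on $\kappa=0$, trace $-2$ again forces $x=0$ and $w_5=-I$. The right non-degeneracy condition is $\kappa\neq 0$, not $s\neq 0$, because $[u,v]=-I$ forces $\operatorname{tr}u=0$. With it the case is easy. One checks $s=(\kappa-2)t^2+2$ with $t=\operatorname{tr}g\in\{x,\,y,\,z,\,yz-x\}$, and the pair $(\kappa,t)$ can be prescribed freely. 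Then $\kappa=4$, $t=1+\sqrt2$ gives $\operatorname{tr}w_i=-2$ with $\operatorname{tr}c\neq 0$. For $w_5$ take $(x,y,z)=(i,2,0)$, so $\kappa=1$.

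\emph{The case $\varepsilon=+1$.} Here you only restate the goal, and the dimension heuristic points the wrong way. From the same formulas,
\[
\operatorname{tr}w_i-2=(s-2)(s+2-\kappa^2)=(\kappa-2)^2\,t^2\,(t^2-\kappa-2).
\]
On the piece $\kappa=2$ (reducible pairs, lying in a Borel) one has $w_i\equiv I$. On the piece $t=0$, $g^2=-I$ gives $gcg^{-1}=c^{-1}$, so again $w_i\equiv I$. Thus $w_i=I$ does hold on entire $5$-dimensional components of the fibre. On the remaining piece $t^2=\kappa+2$ both behaviours occur: for $w_1$, diagonal $A$ with antidiagonal $B$ gives $c=A^2$ and $w_1=I$. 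So you must exhibit explicit matrices. For $w_1$, take $A=\mathrm{diag}(2,\tfrac12)$ and $B=\begin{bmatrix}1&-1\\1&0\end{bmatrix}$. These give $c=\begin{bmatrix}4&-3\\0&\tfrac14\end{bmatrix}$ and $AcA^{-1}=\begin{bmatrix}4&-12\\0&\tfrac14\end{bmatrix}$, which do not commute, so $w_1(A,B)$ is a nontrivial unipotent. Analogous explicit points are still needed for $w_2,w_3,w_4$.

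\emph{A smaller issue in (ii).} ``Two equations in three unknowns, so it has solutions'' is not a valid argument, since affine varieties can be empty. Just exhibit the points: $(1,1,0)$ for $w_1,w_2,w_4$, $(1,0,1)$ for $w_3$, and $(0,1,1)$ for $w_5$. For $w_1,\dots,w_4$ these are the same solutions in $\tau$ that the paper obtains from Theorem~A. For $w_5$ your trace criterion is actually sounder than the paper's argument. The paper's matrix $\mathrm{diag}(t,-1/t)$ has determinant $-1$. Moreover, since $b$ normalizes the diagonal torus, $[a,b]$ is diagonal and $[[a,b],a]=I$ anyway.
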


	\section{Preliminaries}

    In this section, we recall some results that are needed in the  sequel.
    
	  The following facts about $SU(2)$ and the trace of a matrix in $SU(2)$ are immediate.
          \begin{enumerate}
	  	\item 	A matrix in $SU(2)$ is in the form,
	  $\begin{bmatrix}
	  		a & b \\
	  		-\overline{b} & \overline{a}
	 	\end{bmatrix}  $ where $|a|^{2}+|b|^{2}=1$.
	 	\item  The trace of $A$, $T(A)\in [-2,2]$.
	  	\item $A$ satisfies $A^{2}-T(A) A+I=0$. 
		
	  	\item $T(A)=T(A^{-1})$
	  	\item $	T(AB)+T(AB^{-1})=T(A)T(B)$
	  	\item $	T(ABA^{-1}B^{-1})=T(A)^{2}+T(B)^{2}+T(AB)^2-T(A)T(B)T(AB)-2.$
	 \end{enumerate}

    The Chebyshev polynomials are  important ingredients that help in our study of word maps, in particular the second Chebyshev polynomials. We define the second Chebyshev polynomials $U_{n}$  by iteration:

    \begin{Defn}

		Set $U_{0}(x)=1, U_{1}(x)=2x$.  Then the Second Chebyshev polynomial $U_{n}$ is defined as $U_{n+1}(x)=2xU_{n}(x)-U_{n-1}(x)$.
        
    \end{Defn}
    
    \begin{remark}

    When $x=\mathrm{\mathrm{cos}} \theta$ the second Chebyshev polynomials have the trigonometric form 
    \begin{center}
         $U_n(x)=\frac{\mathrm{sin}(n+1) \theta}{\mathrm{sin} \theta}$.
    \end{center}
         So we have,\[ 
	U_{n}(0) = \left\{
	\begin{array}{ll}
		0 & \text{if } n\equiv 1,3 \text{(mod4)} \\
		1 & \text{if } n\equiv 0 \text{(mod4)} \\
		-1 & \text{if }  n\equiv 2 \text{(mod4)}.
	\end{array}
	\right.
	\] 
        \end{remark}
	Now we list some lemmas that will be useful in later sections.
	\begin{lemma}\label{L1}\cite[Lemma 2.1, Lemma 2.3]{VKHMT} We have following relations:
    \begin{enumerate}
        \item Let $A\in SU(2)$. If $x=  T(A)$, then $A^{n}=U_{n-1}(\frac{x}{2})A-U_{n-2}(\frac{x}{2})I$, for all integers $n\geq 2$.
        \item $U_{n}(x)^{2}+U_{n-1}(x)^{2}-2xU_{n}(x)U_{n-1}(x)=1$ for all $n \geq 1.$
    \end{enumerate}
	\end{lemma}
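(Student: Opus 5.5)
Both statements in Lemma~\ref{L1} will be proved by induction on $n$, using the recursion $U_{n+1}(x)=2xU_n(x)-U_{n-1}(x)$ together with the Cayley--Hamilton relation $A^2=T(A)A-I$ (fact (3) above). It is convenient to extend the Chebyshev sequence by $U_{-1}=0$; this is consistent with the recursion, since $U_1=2xU_0-U_{-1}$. Write $x=T(A)$ and $y=x/2$, so $A^2=2yA-I$.

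For part (1), the base case $n=2$ reads $A^2=U_1(y)A-U_0(y)I=2yA-I=xA-I$, which is exactly Cayley--Hamilton. If one wishes to start at $n=1$, the formula also gives $A=U_0A-U_{-1}I$.
\begin{itemize}
\item Assume $A^n=U_{n-1}(y)A-U_{n-2}(y)I$.
\item Multiply on the left by $A$ and substitute $A^2=2yA-I$:
\[
A^{n+1}=U_{n-1}(y)(2yA-I)-U_{n-2}(y)A=\bigl(2yU_{n-1}(y)-U_{n-2}(y)\bigr)A-U_{n-1}(y)I .
\]
\item The recursion turns the coefficient of $A$ into $U_n(y)$, which closes the induction.
\end{itemize}
Nothing here uses unitarity, so the identity holds for any $2\times 2$ matrix of determinant $1$. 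That generality will matter when the result is applied to $SL(2,\mathbb{C})$.

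For part (2), set $F_n=U_n^2+U_{n-1}^2-2xU_nU_{n-1}$. The claim is that $F_n=1$ for every $n\ge 1$.
\begin{itemize}
\item Base case: $F_1=4x^2+1-4x^2=1$. Alternatively $F_0=1+0-0=1$.
\item Inductive step: substitute $U_{n+1}=2xU_n-U_{n-1}$ into $F_{n+1}=U_{n+1}^2+U_n^2-2xU_{n+1}U_n$ and factor.
\end{itemize}
The factoring gives
\[
F_{n+1}=U_{n+1}\bigl(U_{n+1}-2xU_n\bigr)+U_n^2=-U_{n+1}U_{n-1}+U_n^2 .
\]
Substituting for $U_{n+1}$ once more,
\[
-U_{n+1}U_{n-1}+U_n^2=-(2xU_n-U_{n-1})U_{n-1}+U_n^2=U_n^2+U_{n-1}^2-2xU_nU_{n-1}=F_n .
\]
So $F_n$ is constant in $n$ and equals $1$.

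A more conceptual check of part (2) uses part (1). Take $A$ with trace $2x$ and compute $\det(A^n)=1$ from $A^n=U_{n-1}A-U_{n-2}I$. For $A-\lambda I$ we have $\det(A-\lambda I)=\lambda^2-2x\lambda+1$, so $\det(\mu A-\nu I)=\nu^2-2x\mu\nu+\mu^2$. Putting $\mu=U_{n-1}$ and $\nu=U_{n-2}$ recovers the identity. Equivalently, in trigonometric form with $x=\cos\theta$, it reduces to $\sin^2(n+1)\theta+\sin^2 n\theta-2\cos\theta\,\sin(n+1)\theta\,\sin n\theta=\sin^2\theta$, and it extends to all $x$ as a polynomial identity.

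Neither part has a genuine obstacle. The only care needed is bookkeeping of the argument $x/2$ versus $x$ in part (1), and the sign conventions in the algebraic manipulation for part (2).
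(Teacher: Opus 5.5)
Your proof is correct. The induction $A^{n+1}=A\cdot A^{n}$ with $A^{2}=T(A)A-I$ and the Chebyshev recurrence proves part (1). For part (2), the computation $F_{n+1}=U_n^2-U_{n+1}U_{n-1}=F_n$ together with $F_1=1$ settles it, and the $\det(A^n)=1$ check is a valid independent confirmation.

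The paper gives no proof of its own here; it simply imports both facts from \cite[Lemmas 2.1, 2.3]{VKHMT}. Your self-contained induction fills that in completely. The extra generality you note, that part (1) only needs $\det A=1$, is harmless, although the paper never actually uses this lemma in its $SL(2,\mathbb{C})$ section.
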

Consider the action of $SU(2)$ on $(SU(2)\times SU(2))$ by $X.(A,B)=(XAX^{-1},XBX^{-1})$. The orbit space $(SU(2)\times SU(2))/SU(2)$ can be identified 
     $$\tau = \{ (x,y,z)\in [-2,2]^{3}\mid | k(x,y,z)| \leq2\},\text{where}       ~ k(x,y,z)=x^{2}+y^{2}+z^{2}-xyz-2,$$ 
     by the map,
	$$(A,B)\longmapsto (T(A),T(B),T(AB)).$$ 
	
	For any word $w$, the trace of $w(A,B)$ can be expressed as a polynomial $p_{w}$  in the variables $x=T(A), y=T(B), z=T(AB)$ with integer coefficients \cite{VH},\cite{FRKF}.  Since $T(ABA^{-1}B^{-1})=T(A)^{2}+T(B)^{2}+T(AB)^2-T(A)T(B)T(AB)-2$, for the word $w(A,B)=ABA^{-1}B^{-1}$ we can say $p_{w}$ as the polynomial $k(x,y,z)=x^{2}+y^{2}+z^{2}-xyz-2$.
	\begin{lemma} \cite[Proposition 2.1]{VKHMT} \label{L2}
		Let $w\in \textbf{F}_{2}^{(1)}$ be a word of the form $w(a,b)=[u(a,b),v(a,b)]$ where $u,v\in \textbf{F}_{2}$. Then the word map $w$ is surjective if and only if the system of equations $\{ p_{u}=0,  p_{v}=0,  p_{uv}=0 \}$ has a solution in $\tau $.
	\end{lemma}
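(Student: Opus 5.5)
The plan is to reduce surjectivity of $w=[u,v]$ to whether the single element $-I$ lies in the image. Then I translate "$[X,Y]=-I$" into a condition on the traces of $X=u(A,B)$, $Y=v(A,B)$ and $XY=(uv)(A,B)$.

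\emph{Step 1 (surjectivity $\iff$ $-I\in\operatorname{im} w$).} The image of $w$ has four properties:
\begin{itemize}
\item it is invariant under conjugation;
\item it contains $I=w(I,I)$;
\item it is compact and connected, being the continuous image of the compact connected space $SU(2)\times SU(2)$;
\item its image under $T$ is therefore a connected compact subset of $[-2,2]$ containing $2$, i.e.\ an interval $[t_0,2]$.
\end{itemize}
In $SU(2)$ two elements are conjugate if and only if they have the same trace (diagonalize to $\mathrm{diag}(e^{i\theta},e^{-i\theta})$). So the image is exactly $T^{-1}([t_0,2])$. Hence $w$ is surjective if and only if $t_0=-2$, i.e.\ if and only if $-I$, the unique element of trace $-2$, lies in the image.

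\emph{Step 2 (characterize $[X,Y]=-I$ by traces).} I claim that for $X,Y\in SU(2)$ we have $[X,Y]=-I$ if and only if $T(X)=T(Y)=T(XY)=0$.
\begin{itemize}
\item If the three traces vanish, fact (6) gives $T([X,Y])=k(0,0,0)=-2$, and by the remark in Step 1 this forces $[X,Y]=-I$.
\item Conversely, $[X,Y]=-I$ means $XYX^{-1}=-Y$. Taking traces gives $T(Y)=-T(Y)$, so $T(Y)=0$. Symmetrically, $YXY^{-1}=-X$ gives $T(X)=0$. Then fact (6) gives $-2=k(0,0,z)=z^2-2$, so $z=T(XY)=0$.
\end{itemize}

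\emph{Step 3 (pass to $\tau$).} By definition, $p_u(x,y,z)=T(u(A,B))$, $p_v(x,y,z)=T(v(A,B))$ and $p_{uv}(x,y,z)=T(u(A,B)v(A,B))$, where $(x,y,z)=(T(A),T(B),T(AB))$. By Steps 1 and 2, $w$ is surjective if and only if some pair $(A,B)$ makes these three traces vanish. The identification of the orbit space with $\tau$ says precisely that the trace map $(A,B)\mapsto(T(A),T(B),T(AB))$ has image exactly $\tau$. So such a pair exists if and only if the system $p_u=p_v=p_{uv}=0$ has a solution in $\tau$.

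The only point needing care is this last surjectivity of the trace map onto $\tau$, which is where the inequality $|k|\le 2$ enters. Since the paper takes the identification of the orbit space with $\tau$ as given, I would simply invoke it. Otherwise I would prove it by taking $A$ diagonal and $B$ a general $SU(2)$ matrix, and checking that the achievable values of $T(AB)$ for fixed $T(A),T(B)$ form exactly the interval cut out by $|k|\le 2$.
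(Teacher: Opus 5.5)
Your proof is correct. It is essentially the standard argument behind this lemma; the paper gives no proof of its own and simply quotes it from \cite[Proposition 2.1]{VKHMT}. The argument has three steps: reduce surjectivity to $-I\in\text{Im }w$ using connectedness, conjugation invariance and the fact that trace classifies conjugacy classes in $SU(2)$; characterize $[X,Y]=-I$ by $T(X)=T(Y)=T(XY)=0$; and use that $(A,B)\mapsto (T(A),T(B),T(AB))$ maps $SU(2)\times SU(2)$ onto $\tau$.

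One small point on your optional sketch of that last fact. The computation with $A$ diagonal produces exactly the condition $k\le 2$. This agrees with $|k|\le 2$ because $k\ge -2$ holds automatically on $[-2,2]^{3}$: as a quadratic in $z$, $k+2=z^{2}-xyz+x^{2}+y^{2}$ has discriminant $x^{2}y^{2}-4x^{2}-4y^{2}\le 0$.
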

	
	\begin{remark}
		As \( SO(2, \mathbb{C}) \) is abelian, it is clear that no word \( w \in \mathbf{F}_{2}^{(1)} \) is surjective.
		
		Its maximal torus is the group itself, and each element in $SO(2,\mathbb{C})$ is conjugate to a matrix of the form:
		\[
		\begin{bmatrix} \alpha & 0 \\ 0 & \alpha^{-1} \end{bmatrix}, \quad \text{where } \alpha \in \mathbb{C}^{\ast}.
		\]
		
		Let \( w \in \mathbf{F}_{2} \setminus \mathbf{F}_{2}^{(1)} \) be a word of the form:
		\[
		w = a^{n_1} b^{m_1} a^{n_2} b^{m_2} \cdots a^{n_t} b^{m_t}
		\]
		with either \( \sum_{i=1}^{t} n_i \ne 0 \) or \( \sum_{i=1}^{t} m_i \ne 0 \). Without loss of generality, assume \( N := \sum_{i=1}^{t} n_i \ne 0 \).
		
		Let
		\[
		a = \begin{bmatrix} x & 0 \\ 0 & x^{-1} \end{bmatrix}, \quad b = I = \begin{bmatrix} 1 & 0 \\ 0 & 1 \end{bmatrix}.
		\]
		Then the word map evaluates to:
		\[
		w(a, b) = \begin{bmatrix} x^{N} & 0 \\ 0 & x^{-N} \end{bmatrix}.
		\]
		
		Now, if we choose \( x \) to be an \( N \)th root of any \( \alpha \in \mathbb{C}^* \), then:
		\[
		w(a, b) = \begin{bmatrix} \alpha & 0 \\ 0 & \alpha^{-1} \end{bmatrix}.
		\]
		
		Hence, every matrix of this form lies in the image of the word map.  The word is surjective, since $w(PAP^{-1},PBP^{-1})=Pw(A,B)P^{-1}$. So, every word in \( \mathbf{F}_{2} \setminus \mathbf{F}_{2}^{(1)} \) is surjective on \( SO(2, \mathbb{C}) \).
	\end{remark}
	
	\section{Surjectivity of word maps in $SU(2)$}
	In this section, we will show the surjectivity of some family of words on $SU(2)$.
	\begin{prop}\label{a1}
		The word map induced by the word $w(a,b)=[[a,b],a[a,b]a^{-1}]$ is surjective on $SU(2)$.
	\end{prop}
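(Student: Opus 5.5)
The plan is to apply Lemma~\ref{L2} with $u(a,b)=[a,b]$ and $v(a,b)=a[a,b]a^{-1}$, so that $w=[u,v]$. It then suffices to find a point $(x,y,z)\in\tau$, where $x=T(A)$, $y=T(B)$, $z=T(AB)$, at which $p_u$, $p_v$ and $p_{uv}$ all vanish. Write $C=[A,B]$.

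The first two conditions coincide. Indeed $v(A,B)=ACA^{-1}$ is conjugate to $C$, so $p_v=p_u=k(x,y,z)$, and we need $T(C)=k(x,y,z)=0$.

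For the third condition, I will use that $T(C)=0$ gives $C^2=-I$ by the Cayley--Hamilton relation (3), hence $C^{-1}=-C$. Then
\[
T(C\,ACA^{-1})=-T(CAC^{-1}A^{-1})=-T([C,A]).
\]
By the commutator trace formula (6), together with $T(C)=0$, this equals $-(x^2+T(CA)^2-2)$.

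The main step is computing $T(CA)$ in terms of $x,y,z$. I would write $CA=BA^{-1}B^{-1}A^2$ up to cyclic permutation of the trace, and substitute $A^2=xA-I$. This gives
\[
T(CA)=x\,T(BA^{-1}B^{-1}A)-T(BA^{-1}B^{-1}).
\]
Here $BA^{-1}B^{-1}A=A^{-1}CA$ has trace $T(C)=0$, and $T(BA^{-1}B^{-1})=T(A^{-1})=x$. Hence $T(CA)=-x$, and the third condition becomes $2x^2-2=0$, that is, $x=\pm1$.

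It remains to choose $x=1$ and solve $k(1,y,z)=y^2+z^2-yz-1=0$ inside $[-2,2]^3$. A convenient solution is $y=1$, $z=0$, since $k(1,1,0)=1+1+0-0-2=0$. The point $(1,1,0)$ lies in $\tau$ because all coordinates are in $[-2,2]$ and $|k|=0\le 2$. All three equations then hold at this point, and Lemma~\ref{L2} gives surjectivity.

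The only real obstacle is the trace reduction of $T(CA)$. Everything else is a direct consequence of $T(C)=0$ and the identities listed in the preliminaries.
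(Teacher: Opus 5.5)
Your proof is correct and follows the paper's route. You apply Lemma~\ref{L2} with $u=[a,b]$ and $v=a[a,b]a^{-1}$, reduce the third equation on the locus $k(x,y,z)=0$ to $x^2=1$, and exhibit $(1,1,0)\in\tau$ as a solution. The only difference is how the third trace is computed: the paper expands to get the full polynomial $p_{uv}=x^2k(x,y,z)-2x^2+2$ and then sets $k=0$, whereas you impose $T(C)=0$ first and use $C^{-1}=-C$ with the commutator trace formula, a slightly shorter path to the same equation.
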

	\begin{proof}
		We prove this using the Lemma \ref{L2}. We know that polynomial corresponding to the words $[a,b]$ and $ a[a,b]a^{-1}$ is $k(x,y,z)$. That is,
		\begin{center}
			$p_{[a,b]}=k(x,y,z)$, \\
			$p_{a[a,b]a^{-1}}=k(x,y,z).$ 
		\end{center}
Let  $ A,B \in SU(2)$. Consider,\\
		$T([A,B]A[A,B]A^{-1})= T(ABA^{-1}B^{-1}AABA^{-1}B^{-1}A^{-1})
		= T(A^{-1}B^{-1}A^{2}BA^{-1}).$
\begin{equation*}
			\begin{aligned}
				A^{-1}B^{-1}A^{2}BA^{-1}&=A^{-1}B^{-1}(T(A)A-I)BA^{-1}\\
				&= T(A)^{2}A^{-1}B^{-1}AB-T(A)A^{-1}B^{-1}ABA-T(A)A^{-1}+I.
			\end{aligned}
		\end{equation*}
		Thus, 	
		\begin{equation*}
			\begin{aligned}
				T([A,B]A[A,B]A^{-1})&= T(T(A)^{2}A^{-1}B^{-1}AB-T(A)A^{-1}B^{-1}ABA-T(A)A^{-1}+I)\\&= T(A)^{2}T(A^{-1}B^{-1}AB)-2T(A)T(A)+2.
			\end{aligned}
		\end{equation*}
		Then the polynomial corresponding to the word $[a,b]a[a,b]a^{-1}$ is,
		\begin{center}
			$p_{[a,b]a[a,b]a^{-1}}= x^{2}k(x,y,z)-2x^{2}+2.$
		\end{center}
        Now consider the system of equations,
		\begin{equation}\label{a.1}
			\begin{aligned}
				&p_{[a,b]}=0,\\ &p_{a[a,b]a^{-1}}=0,\\ &p_{[a,b]a[a,b]a^{-1}}=0.
			\end{aligned}
		\end{equation}
		Now, using	$p_{[a,b]}=k(x,y,z)=0$ the third equation in the system (\ref{a.1}) becomes $-2x^{2}+2=0$.
		That is, $x= 1$ or $-1$. This shows that $(1,1,0)\in \tau $ is a solution to the system (\ref{a.1}). Therefore, the word map corresponding to the given word $w $ is
surjective.
	\end{proof}
	
	\newpage
	\begin{prop}\label{a2}
		The word map induced by the word $w(a,b)=[[a,b],a^{n}[a,b]a^{-n}]$, $n\geq 2$ is surjective on $SU(2)$.
	\end{prop}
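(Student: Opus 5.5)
The plan is to apply Lemma \ref{L2} with $u=[a,b]$ and $v=a^{n}[a,b]a^{-n}$. As in Proposition \ref{a1}, $p_u=p_v=k(x,y,z)$, since conjugation preserves the trace. So everything reduces to computing $p_{uv}$, at least on the surface $k=0$, and showing that the resulting one-variable equation in $x$ has a root $x$ that extends to a point of $\tau$ with $k=0$.

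\textbf{Step 1: expand the powers.} Let $A,B\in SU(2)$, $C=[A,B]$, $x=T(A)$, and write $\alpha=U_{n-1}(x/2)$, $\beta=U_{n-2}(x/2)$. By Lemma \ref{L1}(1), $A^{n}=\alpha A-\beta I$. Since $T(A^{-1})=x$, the same lemma gives $A^{-n}=\alpha A^{-1}-\beta I$. Expanding,
\[
T(CA^{n}CA^{-n})=\alpha^{2}T(CACA^{-1})-\alpha\beta\,T(CAC)-\alpha\beta\,T(C^{2}A^{-1})+\beta^{2}T(C^{2}).
\]

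\textbf{Step 2: restrict to $k=0$.} On $k=0$ we have $T(C)=0$, so $C^{2}=-I$ and $C^{-1}=-C$. Hence:
\begin{itemize}
\item $T(C^{2})=-2$;
\item $T(C^{2}A^{-1})=-x$;
\item $T(CAC)=-T(CAC^{-1})=-x$;
\item $T(CACA^{-1})=2-2x^{2}$, by the computation in Proposition \ref{a1}.
\end{itemize}
Therefore, on $k=0$,
\[
p_{uv}=2\bigl(\alpha^{2}(1-x^{2})+\alpha\beta x-\beta^{2}\bigr).
\]
Lemma \ref{L1}(2) gives $\alpha^{2}+\beta^{2}-x\alpha\beta=1$. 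Substituting for $\beta^{2}$ simplifies the condition $p_{uv}=0$ to
\[
\alpha^{2}(2-x^{2})=1 .
\]

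\textbf{Step 3: solve the trigonometric equation.} Put $x=2\cos\theta$, so that $\alpha=\sin n\theta/\sin\theta$ and $2-x^{2}=-2\cos 2\theta$. The condition becomes $f(\theta)=0$, where
\[
f(\theta)=-2\cos2\theta\,\sin^{2}n\theta-\sin^{2}\theta .
\]
I look for a root in $[\pi/4,\pi/2]$. Note $f(\pi/4)=-1/2<0$.
\begin{itemize}
\item If $n$ is odd, $f(\pi/2)=2-1=1>0$, and the intermediate value theorem gives a root.
\item If $n\ge 4$ is even, take $\theta_0=\pi/2-\pi/(2n)\in(\pi/4,\pi/2)$. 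Then $\sin^{2}n\theta_0=1$ and $\cos2\theta_0=-\cos(\pi/n)$, so $f(\theta_0)=2\cos(\pi/n)-\sin^{2}\theta_0\ge\sqrt2-1>0$, and again there is a root.
\item If $n=2$, solve directly. With $c=\cos^{2}\theta$, the equation becomes $16c^{2}-8c+1=0$, so $c=1/4$, $\theta=\pi/3$ and $x=1$.
\end{itemize}
This step is the main obstacle: after the simplification one must check, uniformly in $n$, that the equation really has a root in the admissible range. The sign changes above handle it.

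\textbf{Step 4: lift to $\tau$.} A root $\theta\in[\pi/4,\pi/2]$ gives $x\in[0,\sqrt2]$. Take $y=0$ and $z=\sqrt{2-x^{2}}$. Then $k(x,y,z)=0$ and $(x,y,z)\in[-2,2]^{3}$, so the point lies in $\tau$. It satisfies $p_u=p_v=p_{uv}=0$, and Lemma \ref{L2} gives surjectivity.
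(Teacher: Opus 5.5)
Your proof is correct and follows the paper's route: Lemma \ref{L2}, the Chebyshev expansion of $A^{\pm n}$ via Lemma \ref{L1}(1), and reduction by Lemma \ref{L1}(2) to $1+(x^{2}-2)U_{n-1}(x/2)^{2}=0$ on $k=0$, followed by an intermediate value argument split by the parity of $n$. The differences are cosmetic. You evaluate the traces directly on $k=0$ using $C^{2}=-I$ instead of computing the full polynomial $p_{uv}$. Your test point $\theta_0=\pi/2-\pi/(2n)$ covers $n=4$ together with the larger even $n$, whereas the paper uses $x=2\cos\frac{(n-2)\pi}{2(n-1)}$ for even $n>4$ and treats $n=4$ separately through the root $x=-1$.
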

	\begin{proof}
		Let  $ A,B \in SU(2)$. Using (1) of Lemma \ref{L1},
		\begin{equation*}
			\begin{aligned}
				  [A,B]A^{n}[A,B]A^{-n}&= [A,B](U_{n-1}(x/2)A-U_{n-2}(x/2)I)[A,B](U_{n-1}(x/2)A^{-1}-U_{n-2}(x/2)I)\\ &= U_{n-1}(x/2)^{2}[A,B]A[A,B]A^{-1}-U_{n-1}(x/2)U_{n-2}(x/2)[A,B]A[A,B]\\&\hspace*{0.5cm}-U_{n-1}(x/2)U_{n-2}(x/2)[A,B][A,B]A^{-1}+ U_{n-2}(x/2)^{2}[A,B]^{2}.
			\end{aligned}
		\end{equation*}
		Hence we get,
		\begin{equation}\label{1.1}
			\begin{aligned}
				T([A,B]A^{n}[A,B]A^{-n})&=U_{n-1}(x/2)^{2}T([A,B]A[A,B]A^{-1})-U_{n-1}(x/2)U_{n-2}(x/2)T([A,B]A[A,B])\\& \hspace*{0.5cm} -U_{n-1}(x/2)U_{n-2}(x/2)T([A,B][A,B]A^{-1})+ U_{n-2}(x/2)^{2}T([A,B]^{2}).
			\end{aligned}
		\end{equation}
		And
		\begin{equation}\label{1.2}
			\begin{aligned}
				T([A,B]A[A,B]A^{-1})&= 
                T(A)^{2}T([A,B])-2T(A)^{2}+2.
			\end{aligned}
		\end{equation}
		Also,
		\begin{equation}\label{1.3}
			\begin{aligned}
				T([A,B]A[A,B])&= T((T([A,B])[A,B]-I)A)\\&=T([A,B])T([A,B]A)-T(A),
			\end{aligned}
		\end{equation}
		\begin{equation}\label{1.4}
			\begin{aligned}
				T([A,B][A,B]A^{-1})&= T([A,B])T(A)-T(A).
			\end{aligned}
		\end{equation}
		Substituting (\ref{1.2}),(\ref{1.3}) and (\ref{1.4}) in (\ref{1.1}), we get;
		\begin{equation}
			\begin{aligned}
				T([A,B]A^{n}[A,B]A^{-n})&=U_{n-1}(x/2)^{2}(T(A)^{2}T([A,B])-2T(A)^{2}+2)\\&\hspace*{0.5cm}-U_{n-1}(x/2)U_{n-2}(x/2)(T([A,B])T([A,B]A)-T(A))\\& \hspace*{0.5cm} -U_{n-1}(x/2)U_{n-2}(x/2)(T([A,B])T(A)-T(A))+ U_{n-2}(x/2)^{2}T([A,B]^{2}).
			\end{aligned}
		\end{equation}
		Now,
		\begin{equation*}
			\begin{aligned}
				p_{[a,b]a^{n}[a,b]a^{-n}}&=U_{n-1}(x/2)^{2}(x^{2}k(x,y,z)-2x^{2}+2)\\&\hspace*{0.5cm}-U_{n-1}(x/2)U_{n-2}(x/2)(xk(x,y,z)^{2}-xk(x,y,z)-x)\\& \hspace*{0.5cm} -U_{n-1}(x/2)U_{n-2}(x/2)(xk(x,y,z)-x)+ U_{n-2}(x/2)^{2}(k(x,y,z)^{2}-2),\\
				p_{[a,b]}&=k(x,y,z),\\
				p_{a^{n}[a,b]a^{-n}}&= k(x,y,z).
			\end{aligned}
		\end{equation*}
        Now consider the system,
		\begin{equation}\label{2}
			\begin{aligned}
				p_{[a,b]}&=0,\\ p_{a^{n}[a,b]a^{-n}}&=0,\\ p_{[a,b]a^{n}[a,b]a^{-n}}&=0.
			\end{aligned}
		\end{equation}
		That is,
		\begin{center}
			$k(x,y,z)=0,$
		\end{center}
		
			$x^{2}U_{n-1}(x/2)^{2}-2U_{n-1}(x/2)^{2}+U_{n-1}(x/2)^{2}+U_{n-2}(x/2)^{2}-xU_{n-1}(x/2)U_{n-2}(x/2)=0. $\\
		\\
		Using the relation $U_{n}(x)^{2}+U_{n-1}(x)^{2}-2xU_{n}(x)U_{n-1}(x)=1$, the system becomes;
		\begin{center}
        $k(x,y,z)=0,$\\
			$x^{2}U_{n-1}(x/2)^{2}-2U_{n-1}(x/2)^{2}+1=0 $
		\end{center}
		So system (\ref{2}) is equivalent to the following system,
		\begin{equation}\label{3.a}
			\begin{aligned}
				k(x,y,z)&=0,
				\\ 1+(x^{2}-2)U_{n-1}(x/2)^{2}&=0.
			\end{aligned}
		\end{equation}
		When $n=2$, $1+(x^{2}-2)x^{2}=0 \Rightarrow x^{4}-2x^{2}+1=0 \Rightarrow x^{2}=1$. Therefore, $(1,1,0)\in \tau $ is a solution to the system. 
        When $n=4$, $x=-1$ is a root of $1+(x^{3}-2x)(x^{2}-2)=0$. So $(-1,1,0)$ is a solution to the system in $\tau$.
        
		In general, when $n$ is odd, $(n-1)$ is even and hence $U_{n-1}(0)^{2}=1$. Considering the second equation of the system (\ref{3.a}) at $0$, it will give $-1 < 0$. Also, when evaluating the same equation at $\sqrt{2}$ we get $1 > 0$. Hence, we conclude that the second equation has a root between $\sqrt{2}$ and $0$. Say $\alpha$. We can find a $\beta$ such that $\alpha^{2}+\beta^{2}=2$ in $[-2,2]$. Thus, $(\alpha ,\beta ,0)\in \tau$ is a solution to system (\ref{3.a}). 

		When  n is even, take $x=2\mathrm{cos}(\frac{(n-2)\pi }{2(n-1)})$.\\
		Then, \begin{equation*}
			\begin{aligned}
				U_{n-1}\left(\frac{x}{2}\right)=	U_{n-1}\left(\mathrm{cos}\left(\frac{(n-2)\pi }{2(n-1)}\right) \right)&=\frac{\mathrm{sin}(\frac{n(n-2)\pi }{2(n-1)})}{\mathrm{sin}(\frac{(n-2)\pi }{2(n-1)})}=\frac{\mathrm{sin} (\frac{(n-2)\pi }{2}+\frac{(n-2)\pi }{2(n-1)})}{\mathrm{sin}(\frac{(n-2)\pi }{2(n-1)})}\\ &=\frac{\mathrm{sin}(\frac{(n-2)\pi }{2}) \mathrm{cos}(\frac{(n-2)\pi }{2(n-1)})-\mathrm{cos} (\frac{(n-2)\pi }{2})\mathrm{sin}(\frac{(n-2)\pi }{2(n-1)})}{\mathrm{sin} (\frac{(n-2)\pi }{2(n-1)}).}
			\end{aligned}
		\end{equation*}
		Thus when $n$ is even, we get $\mathrm{sin}(\frac{(n-2)\pi }{2})=0 $ and $\mathrm{cos}(\frac{(n-2)\pi }{2})=(-1)^{\frac{n-2}{2}}$.\\
		Hence, $U_{n-1}(\frac{x}{2})^{2}=1$.\\
		Thus the second equation of the system (\ref{3.a}) becomes, \begin{center}
     $1+(x^{2}-2)U_{n-1}(x/2)^{2}=			1+4\mathrm{cos}^{2}\left(\frac{(n-2)\pi}{2(n-1)} \right)-2$\\ $= 4\mathrm{cos}^{2}\left(\frac{(n-2)\pi}{2(n-1)} \right)-1$.\\ $4\mathrm{cos}^{2}\left(\frac{(n-2)\pi}{2(n-1)} \right)-1<0 \Leftrightarrow \mathrm{cos}^{2}\left(\frac{(n-2)\pi}{2(n-1)} \right)< \frac{1}{4}$.
		\end{center}
		Since $\mathrm{cos}(x)$ is decreasing in $[0,\pi]$,
		\begin{center}
			$\mathrm{cos}\left(\frac{(n-2)\pi}{2(n-1)} \right)<\frac{1}{2}=\mathrm{cos}\left(\frac{\pi}{3}\right)\Leftrightarrow  \frac{(n-2)\pi}{2(n-1)} > \frac{\pi}{3} \Leftrightarrow 3n-6 > 2n-2 \Leftrightarrow n>4.$
		\end{center}
        
		Hence we get that, when $n>4$ the second equation in the system (\ref{3.a}) is negative for $x=2\mathrm{cos}(\frac{(n-2)\pi }{2(n-1)})$. Hence there is a root $x$ such that $x<\sqrt{2}$, which gives the existence of  $y,z\in [-2,2]$ such that $k(x,y,z)=0$. 
        
		So, system (\ref{2}) has solution in $\tau$. Therefore, the word map corresponding to the given word $w$ is surjective on $SU(2)$.
		\end{proof}

\newpage
	\begin{prop}\label{a3}
		The word map induced by the word $w(a,b)=[[a,b],b[a,b]b^{-1}]$ is surjective on $SU(2)$.
	\end{prop}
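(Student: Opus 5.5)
The plan is to mirror the proof of Proposition \ref{a1}, with the roles of $a$ and $b$ exchanged, and to apply Lemma \ref{L2} with $u=[a,b]$ and $v=b[a,b]b^{-1}$. Conjugation does not change trace, so $p_u=p_v=k(x,y,z)$. The whole proof therefore reduces to computing $p_{uv}$, the trace polynomial of $[a,b]\,b[a,b]b^{-1}$.

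For $A,B\in SU(2)$ the product cancels to
\[
[A,B]B[A,B]B^{-1}=ABA^{-1}B^{-1}BABA^{-1}B^{-1}B^{-1}=ABA^{-1}\cdot ABA^{-1}B^{-2},
\]
so its trace is $T\bigl((ABA^{-1})^{2}B^{-2}\bigr)$. I will expand this in two steps.
\begin{enumerate}
\item Apply the Cayley--Hamilton relation $(ABA^{-1})^2=T(B)\,ABA^{-1}-I$, which gives a trace of $T(B)\,T(ABA^{-1}B^{-2})-T(B^{-2})$.
\item Apply it again as $B^{-2}=T(B)B^{-1}-I$. This gives $T(ABA^{-1}B^{-2})=T(B)\,T([A,B])-T(ABA^{-1})=y\,k-y$, together with $T(B^{-2})=y^2-2$.
\end{enumerate}
Putting these together,
\[
p_{[a,b]b[a,b]b^{-1}}=y^{2}k(x,y,z)-2y^{2}+2,
\]
which is exactly the polynomial from Proposition \ref{a1} with $x$ replaced by $y$. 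This is expected from the symmetry swapping $a$ and $b$.

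Now consider the system $\{p_u=0,\ p_v=0,\ p_{uv}=0\}$. Substituting $k=0$ into the third equation gives $-2y^2+2=0$, so $y=\pm1$. It remains to find $x,z$ with $k(x,1,z)=x^2+1+z^2-xz-2=0$. The choice $x=1$, $z=0$ works, so $(1,1,0)\in\tau$ is a solution, since $|k|=0\le 2$. Lemma \ref{L2} then gives surjectivity.

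The only step needing care is the trace expansion: the reduction of $[A,B]B[A,B]B^{-1}$ and the two applications of Cayley--Hamilton, where it is easy to slip on which factor is inverted. As an alternative, one could avoid the computation by noting that the automorphism $a\leftrightarrow b$ of $\mathbf F_2$ sends the word of Proposition \ref{a1} to $[[b,a],b[b,a]b^{-1}]$. Since $[b,a]=[a,b]^{-1}$, this word is the same as the one in the statement up to inverses inside the commutator. However, relating those directly still needs a small argument, so I would present the direct trace computation.
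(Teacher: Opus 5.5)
Your proof is correct and matches the paper's: you apply Lemma \ref{L2} with $u=[a,b]$, $v=b[a,b]b^{-1}$, obtain $p_{uv}=y^{2}k(x,y,z)-2y^{2}+2$ by Cayley--Hamilton, and exhibit the solution $(1,1,0)\in\tau$. The only cosmetic difference is that the paper simplifies the product to $AB^{2}A^{-1}B^{-2}$ and expands $B^{\pm 2}$, whereas you write it as $(ABA^{-1})^{2}B^{-2}$; both give the same polynomial.
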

	\begin{proof}
	For any $A,B \in SU(2)$ we have,
		\begin{equation*}\label{4.1a}
			\begin{aligned}
				[A,B]B[A,B]B^{-1}&=ABA^{-1}B^{-1}BABA^{-1}B^{-1}B^{-1}\\ &=AB^{2}A^{-1}B^{-2}\\ &= T(B)^{2}ABA^{-1}B^{-1}-T(B)ABA^{-1}-T(B)B+I.	
			\end{aligned}
		\end{equation*}
        Therefore, trace is
        \begin{equation}
            T([A,B]B[A,B]B^{-1})=T(B)^{2}T(ABA^{-1}B^{-1})-2T(B)^{2}+2.
        \end{equation}
        Consider the system of equations,
		\begin{equation}\label{4}
			\begin{aligned}
				p_{[a,b]}&=0,\\ p_{b[a,b]b^{-1}}&=0,\\ p_{[a,b]a[a,b]a^{-1}}&=0.
			\end{aligned}
		\end{equation}
		From (\ref{4.1a}), the system (\ref{4}) becomes
		\begin{equation*}
			\begin{aligned}
				k(x,y,z)&=0,\\
				y^{2}k(x,y,z)-2y^{2}+2&=0.
			\end{aligned}
		\end{equation*}
        When $ -2y^{2}+2=0
				\Rightarrow y=1 \text{or} -1.$		
		So, $(1,1,0)\in \tau$ is solution to system (\ref{4}). Hence, the given word map is surjective.
	\end{proof}

	\begin{prop}\label{a4}
		The word map induced by the word $w(a,b)=[[a,b],b^{n}[a,b]b^{-n}]$,$n\geq 2$ is surjective on $SU(2)$.
	\end{prop}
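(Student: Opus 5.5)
Mirroring the proof of Proposition \ref{a2}, with the roles of $A$ and $B$ interchanged, we apply Lemma \ref{L2} to $u=[a,b]$, $v=b^{n}[a,b]b^{-n}$. Conjugation does not change the trace, so $p_u=p_v=k(x,y,z)$. It remains to compute $p_{uv}$, the polynomial for $[A,B]B^{n}[A,B]B^{-n}$. By (1) of Lemma \ref{L1}, with $y=T(B)$,
\[
B^{n}=U_{n-1}(y/2)B-U_{n-2}(y/2)I,\qquad B^{-n}=U_{n-1}(y/2)B^{-1}-U_{n-2}(y/2)I .
\]
Expanding the product gives
\[
T([A,B]B^{n}[A,B]B^{-n})=U_{n-1}^{2}\,T([A,B]B[A,B]B^{-1})-U_{n-1}U_{n-2}\bigl(T([A,B]B[A,B])+T([A,B]^{2}B^{-1})\bigr)+U_{n-2}^{2}\,T([A,B]^{2}),
\]
where all $U$'s are evaluated at $y/2$.

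Next we reduce each of these traces modulo $k=T([A,B])$, since we only need them on the locus $k=0$. On that locus $[A,B]^{2}=-I$, by the Cayley--Hamilton identity (3). The proof of Proposition \ref{a3} gives $T([A,B]B[A,B]B^{-1})=y^{2}k-2y^{2}+2$, which becomes $2-2y^{2}$. The other traces become $T([A,B]B[A,B])=kT([A,B]B)-y=-y$, $T([A,B]^{2}B^{-1})=-y$, and $T([A,B]^{2})=-2$. Substituting, the third equation becomes
\[
(2-2y^{2})U_{n-1}^{2}+2yU_{n-1}U_{n-2}-2U_{n-2}^{2}=0.
\]
Identity (2) of Lemma \ref{L1}, applied at $y/2$, gives $U_{n-1}^{2}+U_{n-2}^{2}-yU_{n-1}U_{n-2}=1$. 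Using it to eliminate the cross term, the system becomes
\[
k(x,y,z)=0,\qquad 1+(y^{2}-2)U_{n-1}(y/2)^{2}=0,
\]
which is exactly system (\ref{3.a}) with $x$ replaced by $y$. This matches the $n=1$ case in Proposition \ref{a3}, which gave $y^{2}=1$.

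We then solve the one-variable equation exactly as in Proposition \ref{a2}. For $n=2$ take $y=1$. For $n=4$ take $y=-1$. For odd $n$, the left side equals $-1$ at $y=0$ and $1$ at $y=\sqrt2$, so the intermediate value theorem gives a root $\alpha\in(0,\sqrt2)$. For even $n>4$, evaluate at $y=2\cos\!\bigl(\tfrac{(n-2)\pi}{2(n-1)}\bigr)$: there $U_{n-1}^{2}=1$ and the value $4\cos^{2}(\cdot)-1$ is negative, while the value at $\sqrt2$ is $1$. Either way we get a root $\beta$ with $|\beta|\le\sqrt2$. Choose $x$ with $x^{2}+\beta^{2}=2$ and $z=0$. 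Then $k(x,\beta,0)=0$, so $(x,\beta,0)\in\tau$, and Lemma \ref{L2} gives surjectivity. One can also see the swap directly: the map $(A,B)\mapsto(B^{-1},A^{-1})$ sends $[A,B]$ to $[B^{-1},A^{-1}]$, which relates the two words.

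The main obstacle is the case $n=6$ in the even case, since the cosine trick needs $n>4$. I will check $n=6$ by direct sign analysis of $1+(y^{2}-2)U_{5}(y/2)^{2}$, following Proposition \ref{a2}. Since the reduced equation is identical to the one in that proposition, its resolution carries over verbatim.
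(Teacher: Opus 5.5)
Your proposal is correct and follows the paper's proof essentially step for step: Lemma~\ref{L2}, the Chebyshev expansion of $B^{\pm n}$, reduction on the locus $k=0$ (where $[A,B]^{2}=-I$) to $1+(y^{2}-2)U_{n-1}(y/2)^{2}=0$, and then the case analysis of Proposition~\ref{a2} with the roles of $x$ and $y$ exchanged. Two minor slips, neither affecting the argument: first, $n=6$ already satisfies $n>4$, so the cosine argument covers it and your flagged ``obstacle'' needs no separate check; second, the substitution that actually relates the two words is the plain swap $(a,b)\mapsto(b,a)$, not $(A,B)\mapsto(B^{-1},A^{-1})$, since it sends the word of Proposition~\ref{a2} to $[X^{-1},Y^{-1}]=(YX)^{-1}[X,Y](YX)$ with $X=[a,b]$, $Y=b^{n}[a,b]b^{-n}$, which is a conjugate of $w=[X,Y]$.
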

	\begin{proof}
	Let $A,B \in SU(2)$. By virtue of (1) of Lemma \ref{L1}, 
		\begin{equation*}
			\begin{aligned}
				[A,B]B^{n}[A,B]B^{-n}&= [A,B](U_{n-1}(y/2)B-U_{n-2}(y/2)I)[A,B](U_{n-1}(y/2)B^{-1}-U_{n-2}(y/2)I)\\ &=(U_{n-1}(y/2)[A,B]B-U_{n-2}(y/2)[A,B])(U_{n-1}(y/2)[A,B]B^{-1}-U_{n-2}(y/2)[A,B])\\&= U_{n-1}(y/2)^{2}[A,B]B[A,B]B^{-1}-U_{n-1}(y/2)U_{n-2}(y/2)[A,B]B[A,B]\\&\hspace*{0.5cm}-U_{n-1}(x/2)U_{n-2}(y/2)[A,B][A,B]B^{-1}+ U_{n-2}(y/2)^{2}[A,B]^{2}.
			\end{aligned}
		\end{equation*}
		Hence we get,
		\begin{equation}\label{5.1}
			\begin{aligned}
				T([A,B]B^{n}[A,B]B^{-n})&=U_{n-1}(y/2)^{2}T([A,B]B[A,B]B^{-1})-U_{n-1}(y/2)U_{n-2}(y/2)T([A,B]B[A,B])\\& \hspace*{0.5cm} -U_{n-1}(y/2)U_{n-2}(y/2)T([A,B][A,B]B^{-1})+ U_{n-2}(y/2)^{2}T([A,B]^{2}).
			\end{aligned}
		\end{equation}
		And 
		\begin{equation}\label{5.2}
			\begin{aligned}
				T([A,B]B[A,B]B^{-1})&= T(B)^{2}T([A,B])-2T(B)^{2}+2.
			\end{aligned}
		\end{equation}
		Also,\begin{equation}\label{5.3}
			\begin{aligned}
				T([A,B]B[A,B])&= T((T([A,B])[A,B]-I)B)\\&=T([A,B])T([A,B]B)-T(B)\\&= T([A,B])T(B)-T(B),
			\end{aligned}
		\end{equation}
		
		\begin{equation}\label{5.4}
			\begin{aligned}
				T([A,B][A,B]B^{-1})&= T((T([A,B])[A,B]-I)B^{-1})\\&=  T([A,B])T(ABA^{-1}(T(B)B-I))-T(B) \\&= T(B)T([A,B])^{2}-T([A,B])T(B)-T(B).
			\end{aligned}
		\end{equation}
		Substituting (\ref{5.2}),(\ref{5.3}) and (\ref{5.4}) in (\ref{5.1}), we get;
		\begin{equation}
			\begin{aligned}
				T([A,B]B^{n}[A,B]B^{-n})&=U_{n-1}(y/2)^{2}(T(B)^{2}T([A,B])-2T(B)^{2}+2)\\&\hspace*{0.5cm}-U_{n-1}(y/2)U_{n-2}(y/2)(T([A,B])T(B)-T(B))\\& \hspace*{0.5cm} -U_{n-1}(y/2)U_{n-2}(y/2)(T(B)T([A,B])^{2}-T([A,B])T(B)-T(B))\\& \hspace*{0.7cm} +U_{n-2}(y/2)^{2}T([A,B]^{2}).
			\end{aligned}
		\end{equation}
		Now,\begin{equation*}
			\begin{aligned}
				p_{[a,b]b^{n}[a,b]b^{-n}}&=U_{n-1}(y/2)^{2}(y^{2}k(x,y,z)-2y^{2}+2)\\&\hspace*{0.5cm}-U_{n-1}(y/2)U_{n-2}(y/2)(yk(x,y,z)^{2}-yk(x,y,z)-y)\\& \hspace*{0.5cm} -U_{n-1}(y/2)U_{n-2}(y/2)(yk(x,y,z)-y)+ U_{n-2}(y/2)^{2}(k(x,y,z)^{2}-2),\\
				p_{[a,b]}&=k(x,y,z),\\
				p_{b^{n}[a,b]b^{-n}}&= k(x,y,z).
			\end{aligned}
		\end{equation*}
        Consider the system of equations,
		\begin{equation}\label{16}
			\begin{aligned}
				p_{[a,b]}&=0,\\ p_{b^{n}[a,b]b^{-n}}&=0,\\ p_{[a,b]b^{n}[a,b]b^{-n}}&=0.
			\end{aligned}
		\end{equation}
		The system (\ref{16}) equivalent to: \begin{center}
			$k(x,y,z)=0,$\\
            $U_{n-1}(y/2)^{2}(-2y^{2}+2)+2yU_{n-1}(y/2)U_{n-2}(y/2)-2 U_{n-2}(y/2)^{2}=0. $
		\end{center}
        By using the relation we have, $U_{n}(y)^{2}+U_{n-1}(y)^{2}-2yU_{n}(y)U_{n-1}(y)=1$.
		\begin{center}
			$y^{2}U_{n-1}(y/2)^{2}-2U_{n-1}(y/2)^{2}+1=0 $
			$\Rightarrow$ $1+(y^{2}-2)U_{n-1}(y/2)^{2}=0.$
		\end{center}
		So system (\ref{16}) is equivalent to the following system,
		\begin{equation*}\label{17}
			\begin{aligned}
				k(x,y,z)&=0,
				\\ 1+(y^{2}-2)U_{n-1}(y/2)^{2}&=0.
			\end{aligned}
		\end{equation*}
		The given system of equations corresponds to that in Proposition \ref{a2}, with the variable $x$ replaced by $y$. Hence, we can find the solution in similar way as shown in that proposition by substituting $x$ with $y$. Therefore, the given word map is surjective on $SU(2)$.
		\end{proof}

    \begin{prop}\label{a5}
		The word map induced by the word $w(a,b)=[[a,b],(ab)[a,b](ab)^{-1}]$ is surjective on $SU(2)$.
	\end{prop}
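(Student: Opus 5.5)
We apply Lemma \ref{L2} with $u=[a,b]$ and $v=(ab)[a,b](ab)^{-1}$. Since trace is conjugation invariant, $p_u=p_v=k(x,y,z)$. It therefore suffices to compute $p_{uv}$ and find a point of $\tau$ where $k=0$ and $p_{uv}=0$ hold simultaneously.

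\emph{Simplifying $uv$.} Write $C=AB$. Then
\[
[A,B]\,C[A,B]C^{-1}=ABA^{-1}B^{-1}\,AB\,ABA^{-1}B^{-1}\,B^{-1}A^{-1}.
\]
The middle product $B^{-1}A^{-1}AB$ cancels, so
\[
uv=ABA^{-1}\,ABA^{-1}B^{-2}A^{-1}.
\]
Conjugating by $A$ gives
\[
T(uv)=T\bigl(BA^{-1}ABA^{-1}B^{-2}\bigr)=T(B^{2}A^{-1}B^{-2}).
\]
This expression needs checking, because it would equal $T(A^{-1})=x$, which looks suspicious. So I will redo the cancellation carefully, tracking every factor:
\[
[A,B]\,(AB)\,[A,B]\,(AB)^{-1}=A B A^{-1} B^{-1} A B A B A^{-1} B^{-1} B^{-1} A^{-1}.
\]
No cancellation occurs inside this word. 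The honest route is to conjugate so as to obtain a short cyclic word, then reduce it with the Cayley--Hamilton identity $X^{2}=T(X)X-I$ and the identity $T(XY)+T(XY^{-1})=T(X)T(Y)$, as in Propositions \ref{a1} and \ref{a3}.

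\emph{Reduction.} A useful trick is to rewrite the word in terms of $C=AB$ and $B$ (or $C$ and $A$), since $[A,B]=[C,B]$ up to the substitution $A=CB^{-1}$:
\[
[A,B]=CB^{-1}BB C^{-1}B^{-1}=CBC^{-1}B^{-1}=[C,B].
\]
Hence $w(a,b)=[[c,b],c[c,b]c^{-1}]$ with $c=ab$. This is exactly the word of Proposition \ref{a1} with $a$ replaced by $c$. Reusing that computation, we get
\[
p_{uv}=z^{2}k'-2z^{2}+2,
\]
where $k'$ is the trace polynomial of $[C,B]$ in the variables $T(C)=z$, $T(B)=y$, $T(CB)$. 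Because $[C,B]=[A,B]$, we have $k'=k(x,y,z)$.

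\emph{Solving the system.} The system becomes $k(x,y,z)=0$ together with $-2z^{2}+2=0$, so $z=\pm1$. It remains to exhibit a point of $\tau$ with $z=1$ and $k=0$. Take $(x,y,z)=(0,1,1)$. Then $k=0+1+1-0-2=0$, and the point lies in $[-2,2]^{3}$ with $|k|\le2$, so it belongs to $\tau$. By Lemma \ref{L2}, the word map is surjective.

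The main obstacle is justifying the change of variables. Lemma \ref{L2} is stated in the coordinates $(T(A),T(B),T(AB))$, while we are computing in $(T(C),T(B),T(CB))$. Since $(A,B)\mapsto(AB,B)$ is a bijection of $SU(2)\times SU(2)$, the word map for $w(a,b)$ has the same image as the word map for $w'(c,b)=[[c,b],c[c,b]c^{-1}]$. Surjectivity of the latter follows directly from Proposition \ref{a1}, which makes the whole argument rigorous without recomputing any traces.
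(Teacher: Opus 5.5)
Your argument is correct, but its decisive step differs from the paper's. The paper computes the trace directly. Conjugating by $AB$ gives $T\bigl((A^{-1}B^{-1})^{2}(AB)^{2}\bigr)$, Cayley--Hamilton for $AB$ yields $p_{uv}=z^{2}k(x,y,z)-2z^{2}+2$, and the point $(1,0,1)\in\tau$ then settles Lemma \ref{L2}.

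You instead use the identity $[ab,b]=[a,b]$ in $\mathbf{F}_2$ to write $w(a,b)=w'(ab,b)$ with $w'=[[a,b],a[a,b]a^{-1}]$. In other words, $w$ is the image of the word of Proposition \ref{a1} under the automorphism $a\mapsto ab$, $b\mapsto b$. Since $(A,B)\mapsto(AB,B)$ is a bijection of $SU(2)\times SU(2)$, the two word maps have the same image, and Proposition \ref{a1} finishes the proof. Your Lemma \ref{L2} check at $(0,1,1)$ is then redundant, though also correct.

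The paper's route is self-contained and produces the trace polynomial explicitly. Yours needs no trace algebra and explains why the system is exactly that of Proposition \ref{a1} with $x$ replaced by $z$. It is also valid in any group. The same substitution $c=ab$ therefore reduces Proposition \ref{a6} to Proposition \ref{a2}, and shows that $w_3$ and $w_1$ of Theorem \ref{B} have the same image on $SL(2,\mathbb{C})$.

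In a written version, delete your first attempted simplification. The product $uv$ is a reduced word, so the claimed cancellation and the resulting $T(uv)=x$ are false. The factors $B^{-1}A^{-1}$ and $AB$ only meet cyclically, after conjugating by $AB$, which is exactly how the paper begins.
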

	\begin{proof}
    Let $A,B$ be matrices in $SU(2)$. 
		Consider,\begin{equation}\label{6.1}
			\begin{aligned}
				[A,B]AB[A,B](AB)^{-1}&=ABA^{-1}B^{-1}ABABA^{-1}B^{-1}B^{-1}A^{-1}.\\
			 \text{Therefore,}\hspace{.5cm} T([A,B]AB[A,B](AB)^{-1})&= T(A^{-1}B^{-1}ABABA^{-1}B^{-1})\\&= T((A^{-1}B^{-1})^{2}(AB)^{2}). 		\end{aligned}
		\end{equation}
        We have,  
		$(AB)^{2}=T(AB)AB-I$ and $(A^{-1}B^{-1})^{2}=T(AB)A^{-1}B^{-1}-I$.
        Thus,
		\begin{equation}\label{6.2}
			\begin{aligned}
					(AB)^{2}(A^{-1}B^{-1})^{2}&=(T(AB)AB-I)(T(AB)A^{-1}B^{-1}-I)\\
				&=T(AB)^{2}ABA^{-1}B^{-1}-T(AB)AB-T(AB)A^{-1}B^{-1}+I.
			\end{aligned}
		\end{equation}
		From (\ref{6.1}) and (\ref{6.2}) we can write, \\
		\begin{equation*}
			\begin{aligned}
				T(	[A,B]AB[A,B](AB)^{-1})&= T(T(AB)^{2}ABA^{-1}B^{-1}-T(AB)AB-T(AB)A^{-1}B^{-1}+I)	\\ 
				&=T(AB)^{2}T(ABA^{-1}B^{-1})-2T(AB)^{2}+2. 	
			\end{aligned}
		\end{equation*}
		So the polynomial corresponding to the given word $w(a,b)=[[a,b],(ab)[a,b](ab)^{-1}]$  is,\begin{center}
			$p_{[[a,b],(ab)[a,b](ab)^{-1}] }=z^{2}k(x,y,z)-2z^{2}+2$.
		\end{center}
        Consider the system of equations,
		\begin{equation}\label{6.3}
			\begin{aligned}
				p_{[a,b]}&=0,\\ p_{(ab)[a,b](ab)^{-1}}&=0,\\ p_{[a,b](ab)[a,b](ab)^{-1}}&=0.
			\end{aligned}
		\end{equation}
        This is same as,
		\begin{center}
		    $k(x,y,z)=0,$\\ \hspace{2cm} $z^{2}k(x,y,z)-2z^{2}+2=0.$
		\end{center}
			Thus, $(1,0,1)$ is a solution to (\ref{6.3}) in $\tau$.Therefore, the word map corresponding to the given word is surjective.
	\end{proof}

	\begin{prop}\label{a6}
		The word map induced by the word $w(a,b)=[[a,b],(ab)^{n}[a,b](ab)^{-n}]$, $n\geq 2$ is surjective on $$SU(2)$$.
	\end{prop}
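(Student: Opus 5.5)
Following the pattern of Propositions \ref{a2} and \ref{a5}, the plan is to apply Lemma \ref{L2} with $u=[a,b]$ and $v=(ab)^{n}[a,b](ab)^{-n}$. Since $v$ is a conjugate of $[a,b]$, we have $p_u=p_v=k(x,y,z)$. It therefore suffices to compute $p_{uv}$ and find a point of $\tau$ with $k=0$ and $p_{uv}=0$.

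To compute $p_{uv}$, set $C=AB$, so that $T(C)=z$. By Lemma \ref{L1}(1),
\[
C^{n}=U_{n-1}(z/2)C-U_{n-2}(z/2)I, \qquad C^{-n}=U_{n-1}(z/2)C^{-1}-U_{n-2}(z/2)I.
\]
Expanding $[A,B]C^{n}[A,B]C^{-n}$ exactly as in Proposition \ref{a2} gives four trace terms:
\begin{itemize}
\item $U_{n-1}^2\,T([A,B]C[A,B]C^{-1})$, which equals $U_{n-1}^2\bigl(z^2k-2z^2+2\bigr)$ by the computation in Proposition \ref{a5};
\item $-U_{n-1}U_{n-2}\,T([A,B]C[A,B])$;
\item $-U_{n-1}U_{n-2}\,T([A,B][A,B]C^{-1})$;
\item $U_{n-2}^2\,T([A,B]^2)=U_{n-2}^2(k^2-2)$.
\end{itemize}
For the two middle terms, use $[A,B]^2=k[A,B]-I$. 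This reduces each to $k$ times a trace of a word minus $T(C)=z$. Once $k=0$ is imposed, both middle terms become $U_{n-1}U_{n-2}\,z$ each. So on $k=0$ the remaining equation is
\[
2-2z^2U_{n-1}^2+2zU_{n-1}U_{n-2}-2U_{n-2}^2=0,
\]
with every $U$ evaluated at $z/2$. The identity of Lemma \ref{L1}(2), $U_{n-1}^2+U_{n-2}^2-zU_{n-1}U_{n-2}=1$, is used to eliminate $U_{n-2}^2-zU_{n-1}U_{n-2}$. This should give, as in Proposition \ref{a2},
\[
1+(z^{2}-2)\,U_{n-1}(z/2)^{2}=0.
\]
So the system reduces to $k(x,y,z)=0$ together with this equation, which is the system \eqref{3.a} with $x$ replaced by $z$.

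The real-root analysis of Proposition \ref{a2} then carries over with $x$ replaced by $z$:
\begin{itemize}
\item $n=2$: take $z=1$.
\item $n=4$: take $z=-1$.
\item $n$ odd: the intermediate value theorem on $[0,\sqrt2]$ gives a root $\alpha$.
\item $n$ even, $n>4$: the left side is negative at $z=2\cos\bigl(\frac{(n-2)\pi}{2(n-1)}\bigr)$ and positive at $z=\sqrt2$, giving a root.
\end{itemize}
In every case the root satisfies $|z|\le\sqrt2$.

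It remains to produce $(x,y,z)\in\tau$ with $k(x,y,z)=0$ for each such $z$. Take $x=0$ and $y=\sqrt{2-z^2}$. Then $k=x^2+y^2+z^2-xyz-2=0$, all coordinates lie in $[-2,2]$, and $|k|\le2$. This is the same choice of $\beta$ used before, only with the roles of the coordinates permuted, so $(0,\sqrt{2-z^2},z)\in\tau$ solves the system. Lemma \ref{L2} then gives surjectivity.

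The main obstacle is the bookkeeping in the two middle trace terms. One has to verify that, after substituting $[A,B]^2=k[A,B]-I$, every contribution is a multiple of $k$ apart from the $-z$ terms. This holds because each leftover is $k$ times a trace of the form $T([A,B]C^{\pm1})$, and these are polynomials that need not be computed explicitly. I would check this carefully before invoking the identity from Lemma \ref{L1}(2).
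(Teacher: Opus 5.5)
Your proposal is correct and follows the paper's proof step for step. The shared outline is Lemma \ref{L2} with $u=[a,b]$, the Chebyshev expansion of $(AB)^{\pm n}$ into four trace terms, reduction on $k=0$ to $1+(z^{2}-2)U_{n-1}(z/2)^{2}=0$, and then the root analysis of Proposition \ref{a2} with $x$ replaced by $z$; your point $(0,\sqrt{2-z^{2}},z)$ is the paper's choice of $\beta$. One slip to fix: in your displayed equation on $k=0$ the first term should read $(2-2z^{2})U_{n-1}^{2}$, not $2-2z^{2}U_{n-1}^{2}$ (as written, Lemma \ref{L1}(2) would give $2(1-z^{2})U_{n-1}^{2}=0$), whereas with the correct coefficient the identity gives $-2\bigl(1+(z^{2}-2)U_{n-1}^{2}\bigr)=0$, as you claim.
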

	\begin{proof}
	Let $A,B \in SU(2)$. 	With the help of Lemma \ref{L1} we get, 
		\begin{equation*}
			\begin{aligned}
				[A,B](AB)^{n}[A,B](AB)^{-n}&= [A,B](U_{n-1}(z/2)AB-U_{n-2}(z/2)I)[A,B](U_{n-1}(z/2)(AB)^{-1}\\& \hspace*{1cm}-U_{n-2}(z/2)I)\\ &= U_{n-1}(z/2)^{2}[A,B]AB[A,B](AB)^{-1}-U_{n-1}(z/2)U_{n-2}(z/2)[A,B]AB[A,B]\\&\hspace*{0.5cm}-U_{n-1}(z/2)U_{n-2}(z/2)[A,B][A,B](AB)^{-1}+ U_{n-2}(z/2)^{2}[A,B]^{2}.
			\end{aligned}
		\end{equation*}
		Hence we get,
		\begin{equation}\label{7.1}
			\begin{aligned}
				T([A,B](AB)^{n}[A,B](AB)^{-n})&=U_{n-1}(z/2)^{2}T([A,B]AB[A,B](AB)^{-1})\\&-U_{n-1}(z/2)U_{n-2}(z/2)T([A,B]AB[A,B])\\ & -U_{n-1}(z/2)U_{n-2}(z/2)T([A,B][A,B](AB)^{-1})+ U_{n-2}(z/2)^{2}T([A,B]^{2}).
			\end{aligned}
		\end{equation}
		And
		\begin{equation}\label{7.2}
			\begin{aligned}
				T([A,B]AB[A,B](AB)^{-1})&=
				T(AB)^{2}T(ABA^{-1}B^{-1})-2T(AB)^{2}+2.
			\end{aligned}
		\end{equation}
		Also,  
		\begin{equation}\label{7.3}
			\begin{aligned}
				T([A,B]AB[A,B])&= T((T([A,B])[A,B]-I)AB) 
                \\&= T([A,B])T((AB)^{2}A^{-1}B^{-1})-T(AB)\\&= T([A,B])(T(AB)T(ABA^{-1}B^{-1})-T(AB))-T(AB)\\ &=
				T([A,B])^{2}T(AB)-T([A,B])T(AB))-T(AB).
			\end{aligned}
		\end{equation}
		\begin{equation}\label{7.4}
			\begin{aligned}
				T([A,B][A,B](AB)^{-1})&= T((T([A,B])[A,B]-I)(AB)^{-1})\\&=T([A,B])T([A,B](AB)^{-1})-T(AB)\\&= T([A,B])T(AB)-T(AB).
			\end{aligned}
		\end{equation}
		Substituting (\ref{7.2}),(\ref{7.3}) and (\ref{7.4}) in (\ref{7.1}), we get;
		\begin{equation}
			\begin{aligned}
				T([A,B](AB)^{n}[A,B](AB)^{-n})&=U_{n-1}(z/2)^{2}	T(AB)^{2}T(ABA^{-1}B^{-1})-2T(AB)^{2}+2\\&-U_{n-1}(z/2)U_{n-2}(z/2)	T([A,B])^{2}T(AB)-T([A,B])T(AB))-T(AB)\\ & -U_{n-1}(z/2)U_{n-2}(z/2)T([A,B])T(AB)-T(AB)+ U_{n-2}(z/2)^{2}(T([A,B])^{2}-2).
			\end{aligned}
		\end{equation}
		Now,
		\begin{equation*}
			\begin{aligned}
				p_{[a,b](ab)^{n}[a,b](ab)^{-n}}&=U_{n-1}(z/2)^{2}	z^{2}(k(x,y,z)-2z^{2}+2)\\&   -U_{n-1}(z/2)U_{n-2}(z/2)	(k(x,y,z)^{2}z-k(x,y,z)z-z)\\ & -U_{n-1}(z/2)U_{n-2}(z/2)(k(x,y,z)z-z)+ U_{n-2}(z/2)^{2}(k(x,y,z)^{2}-2),\\
				p_{[a,b]}&=k(x,y,z),\\
				p_{(ab)^{n}[a,b](ab)^{-n}}&= k(x,y,z).
			\end{aligned}
		\end{equation*}
        Consider the system of equations,
		\begin{equation}\label{7.b}
			\begin{aligned}
				p_{[a,b]}&=0,\\ p_{(ab)^{n}[a,b](ab)^{-n}}&=0,\\ p_{[a,b](ab)^{n}[a,b](ab)^{-n}}&=0.
			\end{aligned}
		\end{equation}
		The system (\ref{7.b}) becomes:
		\begin{center}
			$k(x,y,z)=0,$\\
            $U_{n-1}(z/2)^{2}	(z^{2}k(x,y,z)-2z^{2}+2)-U_{n-1}(z/2)U_{n-2}(z/2)	(k(x,y,z)^{2}z-k(x,y,z)z-z)$ \\ $-U_{n-1}(z/2)U_{n-2}(z/2)(k(x,y,z)z-z)+ U_{n-2}(z/2)^{2}(k(x,y,z)^{2}-2)=0. $
	\end{center}
	This is equivalent to,	
		$$k(x,y,z)=0,$$ $$	z^{2}U_{n-1}(z/2)^{2}-2U_{n-1}(z/2)^{2}+U_{n-1}(z/2)^{2}+U_{n-2}(z/2)^{2}-zU_{n-1}(z/2)U_{n-2}(z/2)=0.\\ $$
		Using the relation, $U_{n}(z)^{2}+U_{n-1}(z)^{2}-2zU_{n}(z)U_{n-1}(z)=1$ second equation becomes,
        \begin{center}
			$z^{2}U_{n-1}(z/2)^{2}-2U_{n-1}(z/2)^{2}+1=0 $
			$\Rightarrow$ $1+(z^{2}-2)U_{n-1}(z/2)^{2}=0.$
		\end{center}
So the system (\ref{7.b}) is equivalent to the following system.
		\begin{equation}\label{3}
			\begin{aligned}
				k(x,y,z)&=0,
				\\ 1+(z^{2}-2)U_{n-1}(z/2)^{2}&=0.
			\end{aligned}
		\end{equation}
		The given system of equations corresponds to that in Proposition \ref{a2}, with the variable $x$ replaced by $z$. Hence, we can find the solution in the same way as shown in that proposition by substituting $x$ with $z$. Therefore, the given word map is surjective.
	\end{proof}
	
	\newpage
	\begin{prop}\label{a7}
		The word map induced by the word $w(a,b)=[[a,b],a^{n}b[a,b]b^{-1}a^{-n}]$ is surjective for all $n\geq 2$ on $SU(2)$.
	\end{prop}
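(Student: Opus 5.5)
We will apply Lemma \ref{L2} with $u=[a,b]$ and $v=a^{n}b[a,b]b^{-1}a^{-n}$. Since $v$ is a conjugate of $[a,b]$, we have $p_u=p_v=k(x,y,z)$. It therefore suffices to produce a point of $\tau$ with $k(x,y,z)=0$ at which $p_{uv}$ also vanishes.

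Write $C=[A,B]$. By (1) of Lemma \ref{L1}, $A^{n}=U_{n-1}(x/2)A-U_{n-2}(x/2)I$ and $A^{-n}=U_{n-1}(x/2)A^{-1}-U_{n-2}(x/2)I$. Exactly as in Proposition \ref{a2}, $T(CA^{n}BCB^{-1}A^{-n})$ then splits into four terms:
\[
U_{n-1}^{2}\,T(CABCB^{-1}A^{-1})-U_{n-1}U_{n-2}\,T(CABCB^{-1})-U_{n-1}U_{n-2}\,T(CBCB^{-1}A^{-1})+U_{n-2}^{2}\,T(CBCB^{-1}),
\]
where every $U_j$ is evaluated at $x/2$. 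Two of these traces are immediate from the identity $CB=ABA^{-1}$. First, $CBCB^{-1}=AB^{2}A^{-1}B^{-2}$, whose trace is $y^{2}k-2y^{2}+2$ by the computation in Proposition \ref{a3}. Second, $CBCB^{-1}A^{-1}=AB^{2}A^{-1}B^{-2}A^{-1}$, whose trace is $T(A^{-1})=x$. The remaining two traces I would reduce with the relations $M^{2}=T(M)M-I$ and $T(MN)+T(MN^{-1})=T(M)T(N)$, working modulo $k=0$. On this locus $T(C)=0$, so $C^{2}=-I$ and $C^{-1}=-C$, which shortens the words considerably.

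To keep the computation manageable I would restrict to the slice $y=0$. Then $B^{2}=-I$ and $B^{-1}=-B$, and the constraint becomes $k=x^{2}+z^{2}-2=0$. This is solvable in $[-2,2]$ for every $|x|\le\sqrt2$ by taking $z=\pm\sqrt{2-x^{2}}$, and any such $(x,0,z)$ lies in $\tau$ since $|k|=0\le2$. On this slice $T(CBCB^{-1})=2$, and both $T(CABCB^{-1})$ and $T(CABCB^{-1}A^{-1})$ should reduce to polynomials in $x$ and $z$. Substituting $z^{2}=2-x^{2}$ turns $p_{uv}$ into a function $g(x)$ (possibly carrying a factor $z$, whose sign can be chosen). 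Where possible I would simplify $g$ using (2) of Lemma \ref{L1}, as was done in Proposition \ref{a2}.

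The final step is to find a zero of $g$ on $[0,\sqrt2]$ by the intermediate value theorem. At $x=\sqrt2$ we get $z=0$, so $AB$ has trace $0$; at $x=0$, $A^{2}=-I$ and $U_{n-1}(0),U_{n-2}(0)\in\{0,\pm1\}$ are known from the Remark. Both endpoints should therefore give explicit values, and I would look for a sign change between them. If the endpoints fail to give a sign change for some residues of $n$, I would instead evaluate at Chebyshev nodes $x=2\cos\theta$, where $U_{n-1}(x/2)=\sin(n\theta)/\sin\theta$; this is how the even case of Proposition \ref{a2} was handled.

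The main obstacle I expect is twofold: reducing the two long traces to a clean closed form, and then finding a sign change valid uniformly for all $n\ge2$. The hypothesis $n\ge2$ suggests that $n=1$ genuinely fails, so the sign analysis must use $n\ge2$ in an essential way. If the slice $y=0$ does not produce a sign change, the fallback is to use the slice $z=0$ instead.
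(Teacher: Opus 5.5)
Your plan works, but the slice $y=0$ is a detour the paper avoids. None of the four traces needs a slice, because on the whole locus $k=0$ they are already simple:
\begin{itemize}
\item $T(CABCB^{-1}A^{-1})=2-2z^{2}$, since this word is $[A,B]\,(AB)[A,B](AB)^{-1}$ and the computation of Proposition \ref{a5} applies;
\item $T(CABCB^{-1})=yzk^{2}-2yzk+x=x$, for instance via $T(MN)=T(M)T(N)-T(MN^{-1})$ with $M=CAB$ and $N=CB^{-1}$.
\end{itemize}
Add your two easy traces and apply (2) of Lemma \ref{L1} at $x/2$. The third equation then becomes $1=z^{2}U_{n-1}(x/2)^{2}+y^{2}U_{n-2}(x/2)^{2}$. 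The paper finishes with the single point $(0,1,1)\in\tau$: exactly one of $U_{n-1}(0),U_{n-2}(0)$ is $\pm1$ and the other is $0$. This settles every $n\ge2$ at once, with no sign analysis.

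On your slice, $y=0$ removes the $U_{n-2}$ term from that collapsed equation. With $z^{2}=2-x^{2}$ you get $p_{uv}=2\bigl(1+(x^{2}-2)U_{n-1}(x/2)^{2}\bigr)$, with no factor of $z$. This is exactly the equation of Proposition \ref{a2}, so you inherit its case analysis instead of a uniform argument:
\begin{itemize}
\item For odd $n$ your endpoint sign change works, since $p_{uv}=-2$ at $x=0$ and $2$ at $x=\sqrt2$.
\item For even $n$ both endpoints give $+2$. You need the node $x=2\cos\frac{(n-2)\pi}{2(n-1)}$, where $U_{n-1}(x/2)^{2}=1$, so the bracket equals $x^{2}-1$. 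This is $\le0$ only for even $n\ge4$.
\item For $n=2$ the bracket is $(x^{2}-1)^{2}$. It vanishes at $x=1$ but never changes sign, so an intermediate-value argument cannot find the root; you must exhibit $(1,0,1)$ directly.
\end{itemize}
Finally, your guess that $n=1$ genuinely fails is wrong. For $n=1$ the word is that of Proposition \ref{a5}, solved by $(1,0,1)$, which even lies on your slice. The hypothesis $n\ge2$ is needed only so that the formulas of Lemma \ref{L1} apply.
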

	
	\begin{proof}
    Let $A,B \in SU(2)$. By Lemma \ref{L1} we get
		\begin{equation*}
			\begin{aligned}
				[[A,B] A^{n}B[A,B]B^{-1}A^{-n}]&= [A,B](U_{n-1}(x/2)A-U_{n-2}(x/2)I)B[A,B]B^{-1}(U_{n-1}(x/2)A^{-1}\\&\hspace*{1cm} -U_{n-2}(x/2)I)\\ &= U_{n-1}(x/2)^{2}[A,B]AB[A,B]B^{-1}A^{-1}\\&\hspace*{.75cm}-U_{n-1}(x/2)U_{n-2}(x/2)[A,B]AB[A,B]B^{-1}\\&\hspace*{.75cm}-U_{n-1}(x/2)U_{n-2}(x/2)[A,B]B[A,B]B^{-1}A^{-1}\\&\hspace*{.75cm}+ U_{n-2}(x/2)^{2}[A,B]B[A,B]B^{-1}.
			\end{aligned}
		\end{equation*}
		Therefore,
		\begin{equation}\label{8.5}
			\begin{aligned}
				T([[A,B] A^{n}B[A,B]B^{-1}A^{-n}])&=  U_{n-1}(x/2)^{2}T([A,B]AB[A,B]B^{-1}A^{-1})\\&\hspace*{.75cm}-U_{n-1}(x/2)U_{n-2}(x/2)T([A,B]AB[A,B]B^{-1})\\&\hspace*{.75cm}-U_{n-1}(x/2)U_{n-2}(x/2)T([A,B]B[A,B]B^{-1}A^{-1})\\&\hspace*{.75cm}+ U_{n-2}(x/2)^{2}T([A,B]B[A,B]B^{-1}).
			\end{aligned}
		\end{equation}
      And
		\begin{equation}\label{8.1}
			\begin{aligned}
T([A,B]AB[A,B]B^{-1}A^{-1})&=
                T(AB)^{2}T([A,B])-2T(AB)^{2}+2,
			\end{aligned}
		\end{equation}
		\begin{equation}\label{8.2}
			\begin{aligned}
				T([A,B]AB[A,B]B^{-1})&=
                T(T(B)[A,B]AB[A,B]-[A,B]AB[A,B]B)\\&=T(B)T([A,B]^{2}AB)-T([A,B]AB[A,B]B)\\&= T(B)T([A,B])^{2}T(AB)-2T(B)T([A,B])T(AB)+T(A),
			\end{aligned}
		\end{equation}
		\begin{equation}\label{8.3}
			\begin{aligned}
            T([A,B]B[A,B]B^{-1}A^{-1})&=T(ABA^{-1}B^{-1}BABA^{-1}B^{-1}B^{-1}A^{-1})=T(A),
			\end{aligned}
		\end{equation}
		\begin{equation}\label{8.4}
			\begin{aligned}
T([A,B]B[A,B]B^{-1})&=T(ABA^{-1}B^{-1}BABA^{-1}B^{-1}B^{-1})\\&=T(B)^{2}T([A,B])-2T(B)^{2}+2.
			\end{aligned}
		\end{equation}
		Substituting (\ref{8.2}),(\ref{8.3}),(\ref{8.4}) and (\ref{8.1}) in (\ref{8.5}) , we get;
		\begin{equation}
			\begin{aligned}
				T([[A,B] A^{n}B[A,B]B^{-1}A^{-n}])&=  U_{n-1}(x/2)^{2}(T(AB)^{2}T([A,B])-2T(AB)^{2}+2)\\& -U_{n-1}(x/2)U_{n-2}(x/2)(T(B)T([A,B])^{2}T(AB))\\&-U_{n-1}(x/2)U_{n-2}(x/2)(-2T(B)T([A,B])T(AB)+T(A))\\&-U_{n-1}(x/2)U_{n-2}(x/2)T(A)\\&+ U_{n-2}(x/2)^{2}(T(B)^{2}T([A,B])-2T(B)^{2}+2).
			\end{aligned}
		\end{equation}
		Now,
		\begin{equation*}
			\begin{aligned}
				p_{[a,b]a^{n}b[a,b]b^{-1}a^{-n}}&=U_{n-1}(x/2)^{2}(z^{2}k(x,y,z)-2z^{2}+2)\\&\hspace*{0.5cm}-U_{n-1}(x/2)U_{n-2}(x/2)(yzk(x,y,z)^{2}-2yzk(x,y,z)+x)\\& \hspace*{0.5cm} -U_{n-1}(x/2)U_{n-2}(x/2)x+ U_{n-2}(x/2)^{2}(k(x,y,z)y^{2}-2y^{2}+2),\\
				p_{[a,b]}&=k(x,y,z),\\
				p_{a^{n}b[a,b]b^{-1}a^{-n}}&= k(x,y,z).
			\end{aligned}
		\end{equation*}
        Consider the system of equations,
		\begin{equation}\label{8}
			\begin{aligned}
				p_{[a,b]}&=0,\\ p_{a^{n}b[a,b]b^{-1}a^{-n}}&=0,\\ p_{[a,b]a^{n}b[a,b]b^{-1}a^{-n}}&=0.
			\end{aligned}
		\end{equation}
		This is same as,
		\begin{center}
			$k(x,y,z)=0,$\\
            $U_{n-1}(x/2)^{2}(-2z^{2}+2)-U_{n-1}(x/2)U_{n-2}(x/2)(2x)+ U_{n-2}(x/2)^{2}(-2y^{2}+2)=0 .$
		\end{center}
		
 Using the relation $U_{n}(x)^{2}+U_{n-1}(x)^{2}-2xU_{n}(x)U_{n-1}(x)=1$ the system (\ref{8}) is equivalent to,	
\begin{center}
$k(x,y,z)=0,$\\
			$2(1-y^{2} U_{n-2}(x/2)^{2}-z^{2}U_{n-1}(x/2)^{2})=0$.
	\end{center}
		We can see that $(0,1,1)\in \tau$ is a solution to this system. So the given word map is surjective.
	\end{proof}

		\begin{prop}\label{a8}
		The word map induced by the word $w(a,b)=[[a,b],ab^{2}[a,b]b^{-2}a]$, $n\geq 2$ is surjective on $SU(2)$.
	\end{prop}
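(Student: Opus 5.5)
I will apply Lemma \ref{L2} with $u=[a,b]$ and $v=ab^{2}[a,b]b^{-2}a^{-1}$. I read the last letter of the word in the statement as $a^{-1}$, matching item (e) of Theorem \ref{A} with $n=1$. Since $v$ is a conjugate of $[a,b]$, we have $p_u=p_v=k(x,y,z)$. So it suffices to exhibit one point of $\tau$ with $k=0$ at which $p_{uv}$ also vanishes.

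The plan is not to compute $p_{uv}$ in full. Instead I specialise to a slice of $\tau$ where the word collapses to one already treated, namely $y=T(B)=0$. For $B\in SU(2)$ with $T(B)=0$, fact (3) of the Preliminaries gives $B^{2}=-I$. This is central, so
$$b^{2}[a,b]b^{-2}\mapsto B^{2}[A,B]B^{-2}=[A,B],$$
and hence $uv$ evaluates to $[A,B]A[A,B]A^{-1}$. Because $p_{uv}(x,y,z)=T(uv(A,B))$ for every pair $(A,B)$ realising $(x,y,z)\in\tau$, on the slice $y=0$ we get $p_{uv}(x,0,z)=p_{[a,b]a[a,b]a^{-1}}(x,0,z)$. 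By the computation in Proposition \ref{a1}, the latter equals $x^{2}k(x,0,z)-2x^{2}+2$. (Alternatively, one can expand $B^{2}=yB-I$ in general, obtaining four trace terms, and then put $y=0$; the three terms carrying a factor $y$ drop out and the same conclusion follows.)

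The system of Lemma \ref{L2} restricted to $y=0$ is therefore
$$k(x,0,z)=x^{2}+z^{2}-2=0,\qquad -2x^{2}+2=0 .$$
This is solved by $(x,y,z)=(1,0,1)$. This point lies in $\tau$, since all coordinates are in $[-2,2]$ and $k(1,0,1)=0$, so $|k|\le 2$. By Lemma \ref{L2}, the word map is surjective.

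The only step needing care is the legitimacy of the specialisation. The point is that $p_{uv}$ is a polynomial identity valid on all of $\tau$. Its value at $(1,0,1)$ is therefore the trace of $uv(A,B)$ for any concrete pair realising this point, and for such a pair $B^{2}=-I$. Beyond that, everything reduces to Proposition \ref{a1}.
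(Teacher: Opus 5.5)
Your proof is correct. Reading the last letter as $a^{-1}$ (and ignoring the stray ``$n\geq 2$'') matches the word the paper's proof actually treats.

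You and the paper share the same framework: Lemma \ref{L2} with $u=[a,b]$ and $v=ab^{2}[a,b]b^{-2}a^{-1}$. The difference is in how $p_{uv}$ is handled. The paper expands $B^{\pm 2}=yB^{\pm 1}-I$ and evaluates the four resulting traces. This gives the full polynomial $p_{uv}=y^{2}(z^{2}k-2z^{2}+2)-2y(xzk-2xz+y)+x^{2}k-2x^{2}+2$, and the paper then checks the point $(1,1,0)$. You never compute $p_{uv}$. On the slice $y=0$ the matrix $B^{2}=-I$ is central, so $uv$ collapses to the word of Proposition \ref{a1}, and $(1,0,1)$ suffices. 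The two answers are consistent: on $\{k=0\}$ the paper's polynomial equals $2-2(yz-x)^{2}$, which vanishes at both points.

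What each route buys:
\begin{itemize}
\item The paper's expansion describes the whole relevant zero set, not just its trace on one slice.
\item Your argument is shorter and avoids the four-term trace computation.
\item Your argument also generalizes. For $v=a^{n}b^{m}[a,b]b^{-m}a^{-n}$ with $m$ even, $y=0$ forces $B^{m}=\pm I$, so the system reduces to that of Proposition \ref{a2}. Its solutions exist for every $n\geq 1$ by Propositions \ref{a1} and \ref{a2}, and they all satisfy $x^{2}<2$, hence give points $(x,0,\sqrt{2-x^{2}})\in\tau$.
\item That observation would remove the restriction $n\equiv 1,2,4,5\pmod{6}$ in Proposition \ref{a10}. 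That restriction is exactly the condition $U_{n-1}(1/2)^{2}=1$ forced by the paper's fixed choice $x=1$.
\end{itemize}
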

	\begin{proof}
    
	Let $A,B\in SU(2)$. Consider $T([A,B])=T(ABA^{-1}B^{-1})$, $T(AB^{2}[A,B]B^{-2}A^{-1})=T(ABA^{-1}B^{-1})$.
	\begin{equation*}\label{}
		\begin{aligned}
			T([[A,B] AB^{2}[A,B]B^{-2}A^{-1}])&=  T(B)^{2}T([A,B]AB[A,B]B^{-1}A^{-1})-T(B)T([A,B]AB[A,B]A^{-1})\\& \hspace*{0.75cm}-T(B)T([A,B]A[A,B]B^{-1}A^{-1})+T([A,B]A[A,B]A^{-1})\\ & = T(B)^{2}( T(AB)^{2}T( A^{-1}B^{-1}AB)-T(AB)T(A^{-1}B^{-1})-T(AB)^{2}+2) \\&\hspace*{.75cm}-T(B)(T(A)T(AB)T(A^{-1}B^{-1}AB)-2T(A)T(AB)+T(B))\\&\hspace*{.75cm}-T(B)(T(A)T(AB)T(A^{-1}B^{-1}AB)-2T(AB)T(A)+T(B))\\&\hspace*{.75cm}+T(A)^{2}T(A^{-1}B^{-1}AB)-2T(A)T(A)+2 
			\\ &=  T(B)^{2}( T(AB)^{2}T( A^{-1}B^{-1}AB)-2T(AB)^{2}+2) \\&\hspace*{.75cm}-2T(B)(T(A)T(AB)T(A^{-1}B^{-1}AB)-2T(A)T(AB)+T(B))\\&\hspace*{.75cm}+T(A)^{2}T(A^{-1}B^{-1}AB)-2T(A)^{2}+2.
		\end{aligned}
	\end{equation*}
    Therefore the polynomials corresponding to the words are;
\begin{equation*}
	\begin{aligned}
		p_{[a,b]ab^{2}[a,b]b^{-2}a^{-1}}&=y^{2}(z^{2}k(x,y,z)-2z^{2}+2)-2y(xzk(x,y,z)-2xz+y)+x^{2}k(x,y,z)-2x^{2}+2,\\
		p_{[a,b]}&=k(x,y,z),\\
		p_{ab^{2}[a,b]b^{-2}a^{-1}}&= k(x,y,z).
	\end{aligned}
\end{equation*}
Consider the system of equations,
\begin{equation}\label{34}
	\begin{aligned}
		p_{[a,b]}&=0,\\ p_{ab^{2}[a,b]b^{-2}a^{-1}}&=0,\\ p_{[a,b]ab^{2}[a,b]b^{-2}a^{-1}}&=0.
	\end{aligned}
\end{equation}
The above system of equations is equivalent to:
\begin{center}
	$k(x,y,z)=0,$\\$y^{2}(-2z^{2}+2)-2y(-2xz+y)-2x^{2}+2=0.$
\end{center}
Clearly, $(1,1,0)\in \tau $ is a solution to system (\ref{34}) of equations. Thus, the word map corresponding to the given word is surjective. 
\end{proof}
	\begin{prop}\label{a9}
		The word map induced by the word $w(a,b)=[[a,b],a^{n}b^{2}[a,b]b^{-2}a^{-n}]$, $n\geq 2$ is surjective on $SU(2)$.
	\end{prop}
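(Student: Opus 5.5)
We apply Lemma \ref{L2} with $u=[a,b]$ and $v=a^{n}b^{2}[a,b]b^{-2}a^{-n}$. Since $v$ is a conjugate of $[a,b]$, we have $p_u=p_v=k(x,y,z)$, so it suffices to find a point of $\tau$ with $k=0$ at which $p_{uv}$ also vanishes. To compute $p_{uv}$, expand $A^{n}=U_{n-1}(x/2)A-U_{n-2}(x/2)I$ and $A^{-n}=U_{n-1}(x/2)A^{-1}-U_{n-2}(x/2)I$ using (1) of Lemma \ref{L1}, exactly as in Propositions \ref{a2} and \ref{a7}. This gives
\begin{equation*}
\begin{aligned}
T([A,B]A^{n}B^{2}[A,B]B^{-2}A^{-n}) &= U_{n-1}^{2}\,T([A,B]AB^{2}[A,B]B^{-2}A^{-1}) \\
&\quad -U_{n-1}U_{n-2}\bigl(T([A,B]AB^{2}[A,B]B^{-2})+T([A,B]B^{2}[A,B]B^{-2}A^{-1})\bigr) \\
&\quad +U_{n-2}^{2}\,T([A,B]B^{2}[A,B]B^{-2}),
\end{aligned}
\end{equation*}
with all $U$'s evaluated at $x/2$.

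The first trace was computed in Proposition \ref{a8}. On $k=0$ it reduces to
\[
y^{2}(2-2z^{2})+4xyz-2y^{2}-2x^{2}+2 .
\]
For the last trace, note that $[A,B]B^{2}[A,B]B^{-2}$ has the same trace as $AB^{2}A^{-1}B^{-2}$. We expand $B^{2}=yB-I$ as in Proposition \ref{a3}, or equivalently observe that $T(B^{2})=y^{2}-2$ and $[A,B]B^{2}[A,B]B^{-2}$ is conjugate to $[A,B^{2}]$. On $k=0$ this gives $2-2(y^{2}-2)^{2}$. The two cross traces can be reduced in the same manner, using $M^{2}=T(M)M-I$ repeatedly, to polynomials in $x,y,z,k$.

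The main obstacle is the cross terms: computing them in general is messy. I would avoid this by specializing to $x=0$, where the Remark on $U_{n}(0)$ shows that one of $U_{n-1}(0)$, $U_{n-2}(0)$ vanishes. Then $U_{n-1}(0)U_{n-2}(0)=0$ for every $n$, and the cross terms never need to be computed. With $x=0$, the equation $k=0$ becomes $y^{2}+z^{2}=2$.

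If $n$ is odd, then $U_{n-2}(0)=0$ and $U_{n-1}(0)^{2}=1$. The third equation becomes $2-2y^{2}z^{2}=0$, i.e.\ $y^{2}z^{2}=1$, which together with $y^{2}+z^{2}=2$ is satisfied by $y=z=1$. If $n$ is even, then $U_{n-1}(0)=0$ and $U_{n-2}(0)^{2}=1$. The third equation becomes $2-2(y^{2}-2)^{2}=0$, i.e.\ $y^{2}\in\{1,3\}$, and again $y=z=1$ works.

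Thus in both cases $(0,1,1)\in\tau$ (since $k(0,1,1)=0$) solves the system. The only step requiring care is to double-check the reductions of the first and last traces modulo $k=0$; the cross terms never need to be computed. Lemma \ref{L2} then gives surjectivity.
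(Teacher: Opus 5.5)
Your overall strategy works, but your reduction of the last trace is wrong as written, although the witness you pick survives. The identity $[a,b]\,b\,[a,b]\,b^{-1}=[a,b^{2}]$ from Proposition~\ref{a3} holds only with exponent $1$. In general $[A,B]B^{2}[A,B]B^{-2}$ is neither conjugate to $[A,B^{2}]$ nor of the same trace.

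For example, if $T(B)=0$ then $B^{2}=-I$. Then $[A,B]B^{2}[A,B]B^{-2}=[A,B]^{2}$ has trace $k^{2}-2$, whereas $[A,B^{2}]=I$ has trace $2$.

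Your formula $2-2(y^{2}-2)^{2}$ also does not follow from your own claim, which would give $T([A,B^{2}])=y^{2}k-2y^{2}+2\equiv 2-2y^{2}$. It cannot be correct in any case: at $(0,0,\sqrt{2})\in\tau$, where $k=0$, it equals $-6\notin[-2,2]$.

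Expanding $B^{\pm 2}=yB^{\pm 1}-I$ gives
\[
T([A,B]B^{2}[A,B]B^{-2})=y^{4}k-2y^{4}+4y^{2}-y^{2}k^{2}+k^{2}-2\equiv -2(y^{2}-1)^{2}\pmod{k}.
\]
This is also the $n=2$ case of the reduced equation in Proposition~\ref{a4}. So for even $n$ the condition is $y^{2}=1$, not $y^{2}\in\{1,3\}$, and $(0,1,1)$ still works. Your first-trace reduction $-2y^{2}z^{2}+4xyz-2x^{2}+2$ is correct, so with this one line replaced your proof is complete.

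Once repaired, your route is genuinely different from the paper's. The paper computes all four traces, including the two cross terms $T([A,B]AB^{2}[A,B]B^{-2})$ and $T([A,B]B^{2}[A,B]B^{-2}A^{-1})$. It reduces $p_{uv}$ modulo $k$ to one equation in $U_{n-1}(x/2)$ and $U_{n-2}(x/2)$, then splits by $n \bmod 6$: $(1,1,0)$ for $n\equiv 0,1,3,4$ and $(1,0,1)$ for $n\equiv 2,5$.

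Your choice $x=0$ means $A^{2}=-I$, so $A^{n}\in\{\pm A,\pm I\}$ with matching signs in $A^{-n}$. Hence $v(A,B)$ collapses to $AB^{2}[A,B]B^{-2}A^{-1}$ for $n$ odd, the situation of Proposition~\ref{a8}. For $n$ even it collapses to $B^{2}[A,B]B^{-2}$, the situation of Proposition~\ref{a4} with $n=2$. The cross terms never arise, and the single witness $(0,1,1)$ (the point used in Proposition~\ref{a7}) serves every $n\ge 2$.

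The paper's computation yields the full reduced polynomial and thus more of the solution set. Yours is shorter and avoids the error-prone cross-term algebra, provided the two surviving traces are computed correctly.
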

\begin{proof}
Let $A$ and $B$ be matrices in $SU(2)$. Using (1) of Lemma \ref{L1},
		\begin{equation*}
		\begin{aligned}
			[[A,B] A^{n}B^{2}[A,B]B^{-2}A^{-n}]&= [A,B](U_{n-1}(x/2)A-U_{n-2}(x/2)I)B^{2}[A,B]B^{-2}(U_{n-1}(x/2)A^{-1}\\&\hspace*{1cm} -U_{n-2}(x/2)I)\\ &= U_{n-1}(x/2)^{2}[A,B]AB^{2}[A,B]B^{-2}A^{-1}\\&\hspace*{.75cm}-U_{n-1}(x/2)U_{n-2}(x/2)[A,B]AB^{2}[A,B]B^{-2}\\&\hspace*{.75cm}-U_{n-1}(x/2)U_{n-2}(x/2)[A,B]B^{2}[A,B]B^{-2}A^{-1}\\&\hspace*{.75cm}+ U_{n-2}(x/2)^{2}[A,B]B^{2}[A,B]B^{-2}.
		\end{aligned}
	\end{equation*}
	Therefore, \begin{equation}\label{9.5}
		\begin{aligned}
			T([[A,B] A^{n}B^{2}[A,B]B^{-2}A^{-n}])&=  U_{n-1}(x/2)^{2}T([A,B]AB^{2}[A,B]B^{-2}A^{-1})\\&\hspace*{.5cm}-U_{n-1}(x/2)U_{n-2}(x/2)T([A,B]AB^{2}[A,B]B^{-2})\\&\hspace*{.5cm}-U_{n-1}(x/2)U_{n-2}(x/2)T([A,B]B^{2}[A,B]B^{-2}A^{-1})\\&\hspace*{.5cm}+ U_{n-2}(x/2)^{2}T([A,B]B^{2}[A,B]B^{-2}).
		\end{aligned}
	\end{equation}
    Also,
	\begin{equation}\label{9.1}
		\begin{aligned}
			T([A,B]AB^{2}[A,B]B^{-2}A^{-1})&=T([A,B]A(T(B)B-I)[A,B](T(B)B^{-1}-I)A^{-1})\\&= T(B)^{2}T([A,B]AB[A,B]B^{-1}A^{-1})-T(B)T([A,B]AB[A,B]A^{-1})\\&\hspace{0.5cm}-T(B)T([A,B]A[A,B]B^{-1}A^{-1})+T([A,B]A[A,B]A^{-1})\\&= T(B)^{2}(T(AB)^{2}T(ABA^{-1}B^{-1})-2T(AB)^{2}+2)\\&-2T(B)(T(A)T(AB)T(ABA^{-1}B^{-1})-2T(A)T(AB)+T(B))\\&+T(A)^{2}T(ABA^{-1}B^{-1})-2T(A)^{2}+2,
		\end{aligned}
	\end{equation}
	\begin{equation}\label{9.2}
		\begin{aligned}
			T([A,B]AB^{2}[A,B]B^{-2})&=T([A,B]A(T(B)B-I)[A,B](T(B)B^{-1}-I))\\&= T(B)^{2}T([A,B]AB[A,B]B^{-1})-T(B)T([A,B]AB[A,B])\\&\hspace*{0.5cm}-T(B)T([A,B]A[A,B]B^{-1})+T([A,B]A[A,B])\\&= T(ABA^{-1}B^{-1})^{2}T(B)^{3}T(AB)-2T(B)^{2}T(AB)T(ABA^{-1}B^{-1})\\&\hspace*{0.5cm}-T(ABA^{-1}B^{-1})^{2}T(B)T(AB)+T(ABA^{-1}B^{-1})T(B)T(AB)\\&\hspace*{0.5cm}-T(A)T(B)^{2}T(ABA^{-1}B^{-1})^{2}+2T(A)T(B)^{2}T(ABA^{-1}B^{-1})\\&\hspace*{0.5cm}+T(A)T(ABA^{-1}B^{-1})^{2}-T(A)T(ABA^{-1}B^{-1})+2T(B)T(AB)-T(A),
		\end{aligned}
	\end{equation}
	\begin{equation}\label{9.3}
		\begin{aligned}
			T([A,B]B^{2}[A,B]B^{-2}A^{-1})&=T(B)^{2}T([A,B]B[A,B]B^{-1}A^{-1}-T(B)T([A,B]B[A,B]A^{-1})\\&\hspace*{0.5cm}-T(B)T([A,B][A,B]B^{-1}A^{-1})+T([A,B]^{2}A^{-1})\\&= 2T(B)T(AB)-T(B)T(AB)T(ABA^{-1}B^{-1})\\&\hspace*{0.5cm}+T(ABA^{-1}B^{-1})T(A)-T(A),
		\end{aligned}
	\end{equation}
	\begin{equation}\label{9.4}
		\begin{aligned}
			T([A,B]B^{2}[A,B]B^{-2})&=T([A,B](T(B)B-I)[A,B](T(B)B-I))\\&=
			T(B)^{2}T([A,B]B[A,B]B^{-1})-T(B)T([A,B]B[A,B])\\&\hspace*{0.5cm}-T(B)T([A,B]^{2}B^{-1})+T([A,B]^{2})\\&= T(B)^{4}T(ABA^{-1}B^{-1})-2T(B)^{4}+4T(B)^{2}-T(ABA^{-1}B^{-1})^{2}T(B)^{2}\\&\hspace*{0.5cm}+T(ABA^{-1}B^{-1})^{2}-2.
		\end{aligned}
	\end{equation}
	Substituting (\ref{9.2}),(\ref{9.3}),(\ref{9.4}) and (\ref{9.1}) in (\ref{9.5}) , we get;
	\begin{equation}
		\begin{aligned}
			T([[A,&B] A^{n}B^{2}[A,B]B^{-2}A^{-n}])=  U_{n-1}(x/2)^{2}(T(B)^{2}(T(AB)^{2}T(ABA^{-1}B^{-1})-2T(AB)^{2}+2)\\&-2T(B)(T(A)T(AB)T(ABA^{-1}B^{-1})-2T(A)T(AB)+T(B))+T(A)^{2}T(ABA^{-1}B^{-1})\\&-2T(A)^{2}+2)-U_{n-1}(x/2)U_{n-2}(x/2)(T(ABA^{-1}B^{-1})^{2}T(B)^{3}T(AB)\\&-2T(B)^{2}T(AB)T(ABA^{-1}B^{-1})-T(ABA^{-1}B^{-1})^{2}T(B)T(AB)+T(ABA^{-1}B^{-1})T(B)T(AB)\\&-T(A)T(B)^{2}T(ABA^{-1}B^{-1})^{2}+2T(A)T(B)^{2}T(ABA^{-1}B^{-1})+T(A)T(ABA^{-1}B^{-1})^{2}\\&-T(A)T(ABA^{-1}B^{-1})+2T(B)T(AB)-T(A))-U_{n-1}(x/2)U_{n-2}(x/2)(2T(B)T(AB)\\&-T(B)T(AB)T(ABA^{-1}B^{-1})+ U_{n-2}(x/2)^{2}(T(B)^{4}T(ABA^{-1}B^{-1})-2T(B)^{4}+4T(B)^{2}\\&-T(ABA^{-1}B^{-1})^{2}T(B)^{2}+T(ABA^{-1}B^{-1})^{2}-2).
		\end{aligned}
	\end{equation}
	The polynomials corresponding to the words are \begin{equation*}
		\begin{aligned}
			p_{[a,b]a^{n}b^{2}[a,b]b^{-2}a^{-n}}&=U_{n-1}(x/2)^{2}(y^{2}z^{2}k(x,y,z)-2y^{2}z^{2}-2xyzk(x,y,z)+4xyz+x^{2}k(x,y,z)-2x^{2}+2)\\&\hspace*{0.5cm}-U_{n-1}(x/2)U_{n-2}(x/2)(y^{3}zk(x,y,z)^{2}-2y^{2}zk(x,y,z)-yzk(x,y,z)^{2}+yzk\\&\hspace*{0.5cm}-xy^{2}k(x,y,z)^{2}+2xy^{2}k(x,y,z)+xk(x,y,z)^{2}-xk(x,y,z)+2yz-x)\\&\hspace*{0.5cm} -U_{n-1}(x/2)U_{n-2}(x/2)(2yz-yzk(x,y,z)+xk(x,y,z)-x)\\&\hspace*{0.5cm}+ U_{n-2}(x/2)^{2}(k(x,y,z)y^{4}-2y^{4}+4y^{2}-k(x,y,z)^{2}y^{2}+k(x,y,z)^{2}-2),\\
			p_{[a,b]}&=k(x,y,z),\\
			p_{a^{n}b^{2}[a,b]b^{-2}a^{-n}}&= k(x,y,z).
		\end{aligned}
	\end{equation*}
    Consider the system of equations,
	\begin{equation}\label{7}
		\begin{aligned}
			p_{[a,b]}&=0,\\ p_{a^{n}b^{2}[a,b]b^{-2}a^{-n}}&=0,\\ p_{[a,b]a^{n}b^{2}[a,b]b^{-2}a^{-n}}&=0.
		\end{aligned}
	\end{equation}
	The system (\ref{7}) is equivalent to:  \begin{center}
		$k(x,y,z)=0,$
        $U_{n-1}(x/2)^{2}(4xyz-2y^{2}z^{2}-2x^{2}+2)-U_{n-1}(x/2)U_{n-2}(x/2)(4yz-2x)+ U_{n-2}(x/2)^{2}(4y^{2}-2y^{4}-2)=0 .$
	\end{center}
	When $n\equiv 0,1,3,4$ mod(6), $(1,1,0)\in \tau$ is a solution to (\ref{7}).
When $n\equiv 2,5$ mod(6), $(1,0,1)\in \tau$ is solution to (\ref{7}). 
Thus we say that the word map corresponding to the given word is surjective.
\end{proof}

\newpage
\begin{prop}\label{a10}
		The word map induced by the word $w(a,b)=[[a,b],a^{n}b^{m}[a,b]b^{-m}a^{-n}]$, $n\geq 2$ is surjective on $SU(2)$ when  $n\equiv 1,2,4,5 $ (mod $6$) and $m\equiv 0,2$ (mod $4$). 
	\end{prop}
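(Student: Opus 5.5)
We apply Lemma \ref{L2} with $u=[a,b]$ and $v=a^{n}b^{m}[a,b]b^{-m}a^{-n}$. Since $v$ is a conjugate of $[a,b]$, we have $p_u=p_v=k(x,y,z)$. It therefore suffices to find a single point $(x,y,z)\in\tau$ with $k(x,y,z)=0$ and $p_{uv}(x,y,z)=0$. We will not compute $p_{uv}$ in general, which would be an unmanageable expansion in $U_{n-1},U_{n-2}$ and the analogous Chebyshev expressions in $y$. Instead we specialize to a point at which the factor $b^{m}$ becomes central.

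\textbf{The specific point.} We choose $y=T(B)=0$. By fact (3) of the preliminaries, $B^{2}=T(B)B-I=-I$. Hence $B^{m}=(-1)^{m/2}I$ whenever $m$ is even; this is exactly the hypothesis $m\equiv 0,2 \pmod 4$. Then $B^{m}$ is central, so
\[
v(A,B)=A^{n}B^{m}[A,B]B^{-m}A^{-n}=A^{n}[A,B]A^{-n}.
\]
Consequently $p_{uv}(x,0,z)$ coincides with the polynomial $p_{[a,b]a^{n}[a,b]a^{-n}}(x,0,z)$ computed in Proposition \ref{a2}. This identification is legitimate because both are trace functions on pairs $(A,B)$ with $T(B)=0$, and every point $(x,0,z)\in\tau$ is realized by such a pair. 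By the reduction already carried out in Proposition \ref{a2}, on the locus $k=0$ the equation $p_{uv}=0$ becomes
\[
1+(x^{2}-2)\,U_{n-1}(x/2)^{2}=0 .
\]

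\textbf{Choice of $x$ and the role of $n \bmod 6$.} We take $x=1$, so that $x/2=\cos(\pi/3)$. Then
\[
U_{n-1}(1/2)=\frac{\sin(n\pi/3)}{\sin(\pi/3)},
\]
which equals $\pm1$ exactly when $3\nmid n$, that is, when $n\equiv 1,2,4,5\pmod 6$. In that case the equation reads $1-U_{n-1}(1/2)^{2}=0$, which holds. Finally, with $x=1$ and $y=0$, the condition $k=0$ becomes $1+z^{2}-2=0$, so we take $z=1$. The point $(1,0,1)$ lies in $[-2,2]^{3}$ and satisfies $|k|=0\le 2$, so it belongs to $\tau$. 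Lemma \ref{L2} then gives surjectivity.

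\textbf{Main obstacle.} The main point requiring care is the justification that specializing to $y=0$ legitimately replaces $v$ by $a^{n}[a,b]a^{-n}$ inside the polynomial system. Concretely, one realizes $(1,0,1)$ by an explicit pair $(A,B)$ and checks that $T([A,B])=0$, $T(v(A,B))=0$ and $T(uv(A,B))=0$ directly, using the formula of Proposition \ref{a2}. The remaining step is the trigonometric evaluation of $U_{n-1}(1/2)$, which is routine. We note that the arguments in Proposition \ref{a2} for other values of $x$ suggest the congruence restriction on $n$ could be removed. For the statement as given, however, the single point $(1,0,1)$ suffices.
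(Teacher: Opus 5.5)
Your argument is correct. It uses the same Lemma~\ref{L2} and the same witness point $(1,0,1)\in\tau$ as the paper; the difference is in how you evaluate $p_{uv}$ there.

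The paper expands $T([A,B]A^{n}B^{m}[A,B]B^{-m}A^{-n})$ in full as a polynomial in $U_{n-1}(x/2)$, $U_{n-2}(x/2)$, $U_{m-1}(y/2)$, $U_{m-2}(y/2)$. It then simply asserts that $(1,0,1)$ is a root. Implicitly this works because, for even $m$, $U_{m-1}(0)=0$ and $U_{m-2}(0)^{2}=1$. These kill every term except those reproducing the expression of Proposition~\ref{a2}.

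You specialize before expanding. $T(B)=0$ gives $B^{2}=-I$, so $B^{m}=\pm I$ is central and $v(A,B)=A^{n}[A,B]A^{-n}$. This lets you quote the reduced equation $1+(x^{2}-2)U_{n-1}(x/2)^{2}=0$ directly. At $x=1$ it reads $1-U_{n-1}(1/2)^{2}=0$, i.e.\ $3\nmid n$.

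What your route buys:
\begin{itemize}
\item It avoids a long, hard-to-verify expansion.
\item It makes clear that the hypotheses amount exactly to $m$ even and $3\nmid n$.
\item It substantiates your closing remark. The root-finding in Proposition~\ref{a2} always produces a root $x$ with $x^{2}<2$ (namely $x=\pm 1$, or a root in $(0,\sqrt{2})$). Then $(x,0,\sqrt{2-x^{2}})\in\tau$ solves the system, so the congruence condition on $n$ is unnecessary whenever $m$ is even.
\end{itemize}

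What the paper's route buys: its full expansion is, in principle, valid at every $(x,y,z)$, so it could be used to attack odd $m$. The paper does not exploit this.
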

	\begin{proof}
	Let $A,B\in SU(2)$. We have $T([A,B])=T(ABA^{-1}B^{-1})$, $T(A^{n}B^{m}[A,B]B^{-m}A^{-n})=T(ABA^{-1}B^{-1})$.\\
    Using (1) of Lemma \ref{L1},
		\begin{equation*}\label{}
			\begin{aligned}
				[[A,B] A^{n}B^{m}[A,B]B^{-m}A^{-n}]&= [A,B](U_{n-1}(x/2)A-U_{n-2}(x/2)I)(U_{m-1}(y/2)B-U_{m-2}(y/2)I)
				\\& \hspace*{0.5cm} [A,B](U_{m-1}(y/2)B^{-1}-U_{m-2}(y/2)I)(U_{n-1}(x/2)A^{-1}-U_{n-2}(x/2)I)
				\\&= (U_{n-1}(x/2)U_{m-1}(y/2)[A,B]AB-U_{n-1}(x/2)U_{m-2}(y/2)[A,B]A\\& \hspace*{0.75cm}+U_{n-2}(x/2)U_{m-2}(y/2)[A,B]-U_{n-2}(x/2)U_{m-1}(y/2)[A,B]B)\\& \hspace*{0.2cm}
				(U_{n-1}(x/2)U_{m-1}(y/2)[A,B]B^{-1}A^{-1}-U_{m-1}(y/2)U_{n-2}(x/2)[A,B]B^{-1}\\& \hspace*{0.75cm}-U_{m-2}(y/2)U_{n-1}(x/2)[A,B]A^{-1}+U_{m-2}(y/2)U_{n-2}(x/2)[A,B])
			\end{aligned}
		\end{equation*}
			Therefore,
            \begin{equation*}\label{}
			\begin{aligned}
				T([[A,B] A^{n}B^{m}[A,B]B^{-m}A^{-n}])&= U_{n-1}(x/2)^{2}U_{m-1}(y/2)^{2}(k(x,y,z)z^{2}-2z^{2}+2)\\& \hspace*{-1.2cm}-U_{n-1}(x/2)U_{n-2}(x/2)U_{m-1}(y/2)^{2}(k(x,y,z)^{2}yz-2yz+x)\\& \hspace*{-1.2cm}-2U_{n-1}(x/2)^{2}U_{m-1}(y/2)U_{m-2}(y/2)^{2}(k(x,y,z)xz-4xz+2y)\\& \hspace*{-5.2cm}+U_{n-1}(x/2)U_{n-2}(x/2)U_{m-1}(y/2)U_{m-2}(y/2)(k(x,y,z)^{2}z-2z+(x^{2}y-2xy)k(x,y,z)-x^{2}y+2xy)\\& \hspace*{0.2cm}+U_{n-1}(x/2)^{2}U_{m-2}(y/2)^{2}(x^{2}k(x,y,z)-2x^{2}+2)\\& \hspace*{0.2cm}-U_{n-1}(x/2)U_{n-2}(x/2)U_{m-2}(y/2)^{2}(k(x,y,z)^{2}x-2x)\\& \hspace*{0.2cm}-U_{n-2}(x/2)^{2}U_{m-1}(y/2)U_{m-2}(y/2)(k(x,y,z)^{2}y-yk(x,y,z)-y)\\& \hspace*{0.2cm}-U_{m-2}(y/2)^{2}U_{n-1}(x/2)U_{n-2}(x/2)(k(x,y,z)x-x)\\& \hspace*{0.2cm}+U_{n-2}(x/2)^{2}U_{m-2}(y/2)^{2}(k(x,y,z)^{2}-2)\\& \hspace*{0.2cm}-U_{m-1}(y/2)^{2}U_{n-1}(x/2)U_{n-2}(x/2)x\\& \hspace*{0.2cm}+U_{n-2}(x/2)^{2}U_{m-1}(y/2)^{2}(y^{2}k(x,y,z)-2y^{2}+2)\\& \hspace*{0.2cm}-U_{n-2}(x/2)^{2}U_{m-1}(y/2)U_{m-2}(y/2)(k(x,y,z)y-y)
			\end{aligned}
		\end{equation*}
		Consider the system of equations,
			\begin{equation}\label{}
			\begin{aligned}
				p_{[a,b]}&=0\\ p_{a^{n}b^{m}[a,b]b^{-m}a^{-n}}&=0\\ p_{[a,b]a^{n}b^{m}[a,b]b^{-m}a^{-n}}&=0
			\end{aligned}
		\end{equation}
		The above system of equations is
			\begin{equation*}
			\begin{aligned}
            p_{[a,b]}&=k(x,y,z)=0\\
				p_{a^{n}b^{m}[a,b]b^{-m}a^{-n}}&= k(x,y,z)=0\\
				p_{[a,b]a^{n}b^{m}[a,b]b^{-m}a^{-n}}&=2z^{2}U_{n-1}(x/2)^{2}U_{m-1}(y/2)^{2}-2yU_{n-1}(x/2)^{2}U_{m-1}(y/2)U_{m-2}(y/2)\\&\hspace*{-1.95cm}+2U_{m-1}(y/2)^{2}+4xzU_{n-1}(x/2)^{2}U_{m-1}(y/2)^{2}U_{m-2}(y/2)+(2-2x^{2})U_{n-1}(x/2)^{2}U_{m-2}(y/2)^{2}\\&\hspace*{-1.95cm}+(2xy-4z)U_{n-1}(x/2)U_{m-1}(y/2)U_{n-2}(x/2)U_{m-2}(y/2)+2xU_{n-1}(x/2)U_{n-2}(x/2)U_{m-2}(y/2)^{2}\\&\hspace*{-1.95cm}+2yU_{n-2}(x/2)^{2}U_{m-1}(y/2)U_{m-2}(y/2)-2U_{n-2}(x/2)^{2}U_{m-2}(y/2)^{2}-2y^{2}U_{m-1}(y/2)^{2}U_{n-2}(x/2)^{2}=0	
			\end{aligned}
		\end{equation*}

		Clearly, $(1,0,1)\in \tau $ is a solution to the above system of equations when $n\equiv 1,2,4,5 $ (mod $6$) and $m\equiv 0,2$ (mod $4$). 
		\end{proof}

	\begin{prop}\label{a11}
		The word map induced by the word $w(a,b)=[[a,b],[a,(ab)^{n}]]$, $n\geq 2$ is surjective on $SU(2)$.
	\end{prop}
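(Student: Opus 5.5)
We apply Lemma \ref{L2} with $u=[a,b]$ and $v=[a,(ab)^{n}]$. The task is to find a point of $\tau$ at which
\[
p_{u}=0,\qquad p_{v}=0,\qquad p_{uv}=0 .
\]
As before, $p_u=k(x,y,z)$. The plan is to reduce $p_v$ to a one-variable condition in $z$, fix such a $z$, and then solve the remaining equation $p_{uv}=0$ on the curve cut out of $\{k=0\}$ by that value of $z$.

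\emph{Step 1: computing $p_v$.} Put $C=AB$, so $T(C)=z$. By (1) of Lemma \ref{L1},
\[
C^{n}=U_{n-1}(z/2)C-U_{n-2}(z/2)I .
\]
For $\alpha,\beta$ scalars and $A,C\in SL(2,\mathbb{C})$ one has
\[
T([A,\alpha C+\beta I])-2=\alpha^{2}\bigl(T([A,C])-2\bigr).
\]
This follows by expanding $(\alpha C+\beta I)^{-1}$ via the adjugate and applying facts (5) and (6). Moreover $[A,C]=[A,AB]=A[A,B]A^{-1}$, so $T([A,C])=k(x,y,z)$. Hence
\[
p_{v}=2+U_{n-1}(z/2)^{2}\bigl(k(x,y,z)-2\bigr).
\]
On $\{k=0\}$ the condition $p_v=0$ is therefore equivalent to $U_{n-1}(z/2)^{2}=1$. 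I would choose $z=2\cos\frac{\pi}{n+1}$. Then
\[
U_{n-1}(z/2)=\frac{\sin\frac{n\pi}{n+1}}{\sin\frac{\pi}{n+1}}=1 ,
\]
and since $n\ge 2$ this $z$ satisfies $|z|<2$. For odd $n$ the simpler choice $z=0$ also works, because $U_{n-1}(0)=\pm1$ in that case.

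\emph{Step 2: computing $p_{uv}$.} Substitute $C^{n}=U_{n-1}C-U_{n-2}I$ (arguments $z/2$ suppressed) into $[A,B]\,A\,C^{n}A^{-1}C^{-n}$. Writing $C^{-n}=U_{n-1}C^{-1}-U_{n-2}I$, the product expands into four terms with coefficients $U_{n-1}^2$, $-U_{n-1}U_{n-2}$, $-U_{n-1}U_{n-2}$ and $U_{n-2}^2$. Their traces are
\[
T([A,B]ACA^{-1}C^{-1}),\quad T([A,B]ACA^{-1}),\quad T([A,B]C^{-1}),\quad T([A,B]).
\]
Each reduces to a polynomial in $x,y,z,k$ using facts (3)–(6), in the same way as equations (\ref{8.1})–(\ref{8.4}) were obtained. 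Now impose $k=0$ and $U_{n-1}=1$. The identity (2) of Lemma \ref{L1} then gives
\[
U_{n-2}^{2}-zU_{n-2}=0,
\]
so $U_{n-2}\in\{0,z\}$; for $z=2\cos\frac{\pi}{n+1}$ one checks $U_{n-2}(z/2)=z$. The third equation thus becomes an explicit low-degree polynomial equation $F(x,y)=0$ with the fixed value of $z$.

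\emph{Step 3: solving on the conic.} For fixed $z$ with $|z|<2$, the set $\{k=0\}$ is the ellipse $x^{2}+y^{2}-xyz=2-z^{2}$. Every point of it lies in $\tau$, since $k=0$ forces $|x|,|y|\le 2$. I would parametrize this ellipse by an angle $\phi$ and look for two points at which $F$ has opposite signs, for example points with $x=0$ and with $y=0$, or with $x=\pm y$. The intermediate value theorem then yields a common zero of all three equations in $\tau$.

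The main obstacle is Step 2: obtaining a clean closed form for $T([A,B]ACA^{-1}C^{-1})$ and $T([A,B]ACA^{-1})$, and then exhibiting the sign change of $F$ uniformly in $n$. If the direct sign check proves messy, I would switch to the quaternion picture. On $k=0$, both $[A,B]$ and $[A,C^{n}]$ are traceless, that is, pure unit quaternions, and $T(uv)=0$ says exactly that they are orthogonal. I would then try to rotate $B$ inside the one-parameter family, keeping $x$ and $z$ fixed, until orthogonality is achieved by a continuity argument.
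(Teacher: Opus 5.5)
There is a genuine gap, and it sits in your choice of $z$. Your formula $p_v=2+U_{n-1}(z/2)^2(k-2)$ is correct, since $\det(\alpha C+\beta I)=\alpha^2+\alpha\beta z+\beta^2=1$. It agrees with the paper's $p_{[a,(ab)^n]}$ after (2) of Lemma \ref{L1}. So on $\{k=0\}$ the second equation is indeed $U_{n-1}(z/2)^2=1$.

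The problem is $z=2\cos\frac{\pi}{n+1}$ together with your Step 3 claim that, for every $|z|<2$, the slice $\{k=0\}$ is an ellipse. For $|z|<2$ the form $x^2-zxy+y^2$ is positive definite. Hence $x^2+y^2-xyz=2-z^2$ has no real solution when $z^2>2$, and only $(x,y)=(0,0)$ when $z^2=2$. Since $2\cos\frac{\pi}{n+1}\ge\sqrt2$ for all $n\ge3$, your slice is empty for $n\ge4$. For $n=3$ it is the single point $(0,0,\sqrt2)$, where the third polynomial equals $-2$. So there is nothing to run the intermediate value argument on. The quaternion fallback does not help either: for $n\ge4$ no pair $(A,B)$ with this $T(AB)$ has $T([A,B])=0$, and for $n=3$ there is only one conjugacy class of such pairs, so there is no family to deform.

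Your own reduction shows exactly what is needed. With $U_{n-1}(z/2)=\varepsilon=\pm1$ and $U_{n-2}(z/2)=\varepsilon z$ (equivalently $U_n(z/2)=0$), the third polynomial restricted to $\{k=0\}$ is
\[
2-2x^2-2z^2+2xyz=2(y^2-1).
\]
So a solution exists exactly when the slice reaches $|y|=1$, i.e.\ when $z^2\le 4/3$.

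For $n=2$ your $z=1$ works. Your side remark $z=0$ rescues odd $n$. For even $n\ge4$, however, the proposal gives no admissible $z$. Insisting on $U_{n-1}=1$ rather than $U_{n-1}^2=1$ is what pushed you to the largest root, and you need a small one. For even $n$, $z=2\cos\frac{n\pi}{2(n+1)}=2\sin\frac{\pi}{2(n+1)}\in(0,1]$ is a root of $U_n(z/2)$. Then $y=1$ together with $x^2-zx+z^2-1=0$ (discriminant $4-3z^2>0$) is a solution, and no sign-change argument is needed.

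For $n\ge3$ the paper instead uses the other branch $U_{n-2}(z/2)=0$. It takes $z=0$ for odd $n$, and $z=2\cos\frac{(n-2)\pi}{2(n-1)}$ for even $n$, which equals $1$ at $n=4$ and is $<1$ for $n>4$. There the third equation collapses to $x^2=1$, and with $x=1$ one solves $y^2-zy+z^2-1=0$.
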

	\begin{proof}
		Let $A,B\in Su(2)$. Consider,
		\begin{equation*}
			\begin{aligned}
				[A,(AB)^{n}]&= A(AB)^{n}A^{-1}(AB)^{-n}\\ &=A(U_{n-1}(z/2)AB-U_{n-2}(z/2)I)A^{-1}(U_{n-1}(z/2)B^{-1}A^{-1}-U_{n-2}(z/2)I)\\&= U_{n-1}(z/2)^{2}A^{2}BA^{-1}B^{-1}A^{-1}-U_{n-1}(z/2)U_{n-2}(z/2)A^{2}BA^{-1}\\&\hspace*{0.5cm}-U_{n-1}(z/2)U_{n-2}(z/2)B^{-1}A^{-1}+ U_{n-2}(z/2)^{2}I
			\end{aligned}
		\end{equation*} 
		\begin{equation*}
			\begin{aligned}
				[A,B][A,{(AB)}^{n}]&= [A,B][A,(U_{n-1}(z/2)AB-U_{n-2}(z/2)I)]\\ &=ABA^{-1}B^{-1}A(U_{n-1}(z/2)AB-U_{n-2}(z/2)I)A^{-1}(U_{n-1}(z/2)B^{-1}A^{-1}-U_{n-2}(z/2)I)\\&= U_{n-1}(z/2)^{2}ABA^{-1}B^{-1}A^{2}BA^{-1}B^{-1}A^{-1}-U_{n-1}(z/2)U_{n-2}(z/2)ABA^{-1}B^{-1}A^{2}BA^{-1}\\&\hspace*{0.5cm}-U_{n-1}(z/2)U_{n-2}(z/2)ABA^{-1}B^{-1}AA^{-1}B^{-1}A^{-1}+ U_{n-2}(z/2)^{2}ABA^{-1}B^{-1}AA^{-1}.
			\end{aligned}
		\end{equation*}
		Hence we get,
		\begin{equation}
			\begin{aligned}
				T([A,{(AB)}^{n}])&=U_{n-1}(z/2)^{2}T(ABA^{-1}B^{-1})-2U_{n-1}(z/2)U_{n-2}(z/2)T(AB)+ 2U_{n-2}(z/2)^{2},
			\end{aligned}
		\end{equation}
		\begin{equation}\label{11.1}
			\begin{aligned}
				T([A,B][A,{(AB)}^{n}])&=U_{n-1}(z/2)^{2}T(A^{-1}B^{-1}A^{2}BA^{-1})-U_{n-1}(z/2)U_{n-2}(z/2)T(B^{2}A^{-1}B^{-1}A^{2})\\&\hspace*{0.5cm}-U_{n-1}(z/2)U_{n-2}(z/2)T(A^{-1}B^{-1})+ U_{n-2}(z/2)^{2}T(ABA^{-1}B^{-1}).
			\end{aligned}
		\end{equation}
		
		And
		\begin{equation}\label{11.2}
			\begin{aligned}
				T(A^{-2}B^{-1}A^{2}B)&=T((T(A)A^{-1}-I)B^{-1}(T(A)A-I)B)\\&=T(T(A)^{2}A^{-1}B^{-1}AB-T(A)A^{-1}-T(A)B^{-1}AB+I)\\&=T(A)^{2}T(A^{-1}B^{-1}AB)-2T(A)^{2}+2.
			\end{aligned}
		\end{equation}
		
		Also,
		\begin{equation}\label{11.3}
			\begin{aligned}
				T(B^{2}A^{-1}B^{-1}A^{2})&= T((T(B)B-I)A^{-1}B^{-1}(T(A)A-I))\\&=T(T(A)T(B)BA^{-1}B^{-1}A-T(B)BA^{-1}B^{-1}-T(A)A^{-1}B^{-1}A+A^{-1}B^{-1})\\&= T(A)T(B)T(ABA^{-1}B^{-1})-2T(A)T(B)+T(AB).
			\end{aligned}
		\end{equation}

		Substituting (\ref{11.2}) and (\ref{11.3}) in (\ref{11.1}), we get;
		\begin{equation}
			\begin{aligned}
				T([A,B][A,{(AB)}^{n}])&=U_{n-1}(z/2)^{2}(T(A)^{2}T(A^{-1}B^{-1}AB)-2T(A)^{2}+2)\\&\hspace*{0.5cm}-U_{n-1}(z/2)U_{n-2}(z/2)(T(A)T(B)T(ABA^{-1}B^{-1})-2T(A)T(B)+T(AB))\\&\hspace*{0.5cm}-U_{n-1}(z/2)U_{n-2}(z/2)T(A^{-1}B^{-1})+ U_{n-2}(z/2)^{2}T(ABA^{-1}B^{-1}).
			\end{aligned}
		\end{equation}
		So the polynomials are,
		\begin{equation*}
			\begin{aligned}
				p_{[a,b][a,(ab)^{n}]}&=U_{n-1}(z/2)^{2}(x^{2}k(x,y,z)-2x^{2}+2)\\&\hspace*{0.5cm}-U_{n-1}(z/2)U_{n-2}(z/2)(xyk(x,y,z)-2xy+z)\\&\hspace*{0.5cm}-U_{n-1}(z/2)U_{n-2}(z/2)z+ U_{n-2}(z/2)^{2}k(x,y,z),\\
				p_{[a,b]}&=k(x,y,z),\\
				p_{[a,(ab)^{n}]}&= U_{n-1}(z/2)^{2}k(x,y,z)-2U_{n-1}(z/2)U_{n-2}(z/2)z+ 2U_{n-2}(z/2)^{2}.
			\end{aligned}
		\end{equation*}
        Consider the system,
		\begin{equation}\label{5}
			\begin{aligned}
				p_{[a,b]}&=0,\\ 	p_{[a,(ab)^{n}]}&=0, \\ 	p_{[a,b][a,(ab)^{n}]}&=0.
			\end{aligned}
		\end{equation}
		The system (\ref{5}) is equivalent to the following system,
		\begin{equation}\label{11.5}
			\begin{aligned}
				k(x,y,z)&=0,\\
				U_{n-2}(z/2)(U_{n-2}(z/2)-U_{n-1}(z/2)z) &=0 ,\\
				U_{n-1}(z/2)(U_{n-1}(z/2)(1-x^{2})+U_{n-2}(z/2)(xy-z))&=0.
			\end{aligned}		
		\end{equation}
		
		When $n=2$, the system (\ref{11.5}) becomes 
		\begin{equation}\label{5.6}
			\begin{aligned}
				k(x,y,z)&=0,\\
				1-z^{2} &=0, \\
				z(z(1-x^{2})+xy-z)&=0.
			\end{aligned}		
		\end{equation}
		Clearly, $(1,1,1)\in \tau $ is solution to the system (\ref{11.5}). Thus, when $n=2$ the given word becomes surjective. 
		
		When $n=4$,  the system (\ref{11.5})  becomes
		\begin{equation}\label{5.7}
			\begin{aligned}
				k(x,y,z)&=0,\\
				(z^{2}-1)((z^{2}-1)-(z^{3}-2z)z) &=0, \\
				(z^{3}-2z)((z^{3}-2z)(1-x^{2})+(z^{2}-1)(xy-z))&=0.
			\end{aligned}		
		\end{equation}
		In this case also $(1,1,1)\in \tau $ is a solution  the system (\ref{5.7}).\\
		When $n$ is an odd number then $U_{n-2}(0)=0$. So $(1,1,0)\in \tau $ is a solution to (\ref{5}). Hence, when $n$ is odd the given word map is surjective.
		
		Now consider the case when $n$ is even and greater than 4, i.e. $n>4$. Take $z=2\mathrm{cos}(\frac{(n-2)\pi }{2(n-1)})$.
		\begin{equation*}
			\begin{aligned}
				U_{n-2}\left(\frac{z}{2}\right)=	U_{n-2}\left(\mathrm{cos}(\frac{(n-2)\pi }{2(n-1)}) \right) =\frac{\mathrm{sin}(\frac{(n-1)(n-2)\pi }{2(n-1)})}{\mathrm{sin}(\frac{(n-2)\pi }{2(n-1)})}=0.
			\end{aligned}
		\end{equation*}
		Also,
		\begin{center}
       
			$\mathrm{cos}\left(\frac{(n-2)\pi}{2(n-1)} \right)<\frac{1}{2}=\mathrm{cos}\left(\frac{\pi}{3}\right)\Leftrightarrow  \frac{(n-2)\pi}{2(n-1)} > \frac{\pi}{3} \Leftrightarrow 3n-6 > 2n-2 \Leftrightarrow n>4.$
		\end{center}
		\begin{center}
			$\Rightarrow $ $z=2\mathrm{cos}(\frac{(n-2)\pi }{2(n-1)})< 1 $.
		\end{center}
		Take $x=1$. Then we can find a $y\in [-2,2]$ such that $(1,y,2\mathrm{cos}(\frac{(n-2)\pi }{2(n-1)}))\in \tau $ is solution to (\ref{11.5}).	
	\end{proof}
	
	\textit{Proof of Theorem \ref{A}}. (a) The Proposition \ref{a1} gives the case of $n=1$ and proposition \ref{a2} will give the rest. (b) Proposition \ref{a3} and proposition \ref{a4} will prove the complete case. (c) proposition \ref{a5} and \ref{a6} together completes the proof. (d) Propostion \ref{a7}. (e) Proposition \ref{a8} and proposition \ref{a9} complete the proof. (f)	Proposition \ref{a10} gives the  proof. (g)	Proposition \ref{a11} gives the  proof.
	
	\section{Surjectivity of word maps in $SL(2,\mathbb{C})$}
    In this section, we prove the surjectivity of certain word maps in $SL(2,\mathbb{C})$ with the help of SageMath computations. \\
	Define $C(A):= \{gAg^{-1}\mid g\in SL(2,\mathbb{C})\}$. Then it is easy to see the following:
	$$SL(2,\mathbb{C})=\{I\}\bigcup \{-I\}\bigcup C\left(
	\begin{bmatrix}
		-1 & 1 \\
		0 & -1
	\end{bmatrix}
	\right) \bigcup C\left(
	\begin{bmatrix}
		1 & 1 \\
		0 & 1
	\end{bmatrix}
	\right)\bigcup\limits_{t\in C^{*},t\neq \pm1} C\left(
	\begin{bmatrix}
		t & 0 \\
		0 & t^{-1}
	\end{bmatrix}
	\right)$$\\
	We know that, $w(PAP^{-1},PBP^{-1})=Pw(A,B)P^{-1}$.
	
	\begin{lemma}\label{onto}\cite{TBYGZ}
		A regular non-constant function on $SL(2,\mathbb{C})^{n}$ omits no values in $\mathbb{C}$.  
	\end{lemma}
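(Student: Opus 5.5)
The statement to prove is Lemma \ref{onto}: a regular non-constant function $f$ on $SL(2,\mathbb{C})^n$ omits no value in $\mathbb{C}$. Fix $c\in\mathbb{C}$. I will restrict $f$ to a suitable one-parameter affine family and reduce to the fact that a non-constant one-variable polynomial has a root. The difficulty is that $SL(2,\mathbb{C})$ is not an affine space. A bare restriction to a curve $\mathbb{C}^*\to SL(2,\mathbb{C})$ only yields a Laurent polynomial, and Laurent polynomials such as $t$ can omit $0$. So the family must be chosen so that the restriction is a genuine polynomial.

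\textbf{Step 1 (unipotent parametrisation).} Set $U(s)=\begin{bmatrix}1&s\\0&1\end{bmatrix}$ and $L(s)=\begin{bmatrix}1&0\\s&1\end{bmatrix}$. Every element of $SL(2,\mathbb{C})$ is a product of at most four elementary unipotent matrices, alternating upper and lower. Hence the morphism
\[
\Phi:\mathbb{C}^{4n}\to SL(2,\mathbb{C})^n,\qquad (s_{ij})\mapsto \bigl(U(s_{i1})L(s_{i2})U(s_{i3})L(s_{i4})\bigr)_{i=1}^n
\]
is surjective. Its entries are polynomials in the $s_{ij}$, so $F=f\circ\Phi$ is a polynomial on $\mathbb{C}^{4n}$. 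Because $\Phi$ is surjective and $f$ is non-constant, $F$ is non-constant.

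\textbf{Step 2 (one variable).} A non-constant polynomial on $\mathbb{C}^{4n}$ attains every complex value. To see this, pick points $p,q$ with $F(p)\neq F(q)$ and restrict $F$ to the line $t\mapsto p+t(q-p)$. This gives a one-variable polynomial $g(t)$ with $g(0)\ne g(1)$, so $g$ is non-constant. By the fundamental theorem of algebra, $g(t)=c$ has a root $t_0$. Then $f\bigl(\Phi(p+t_0(q-p))\bigr)=c$, so $c$ lies in the image of $f$.

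\textbf{Main obstacle.} The key point is the surjectivity of $\Phi$ in Step 1, i.e. that four unipotent factors suffice. I would check it directly. For $\begin{bmatrix}a&b\\c&d\end{bmatrix}$ with $c\neq 0$, one has $U(s_1)L(s_2)U(s_3)$ with $s_2=c$, $s_1=(a-1)/c$ and $s_3=(d-1)/c$; the $(1,2)$ entry is then forced by the determinant condition. If $c=0$, first multiply by $L(1)$ to make the lower-left entry nonzero, then apply the previous case; this produces the fourth factor.
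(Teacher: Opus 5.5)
Your proof is correct, and it necessarily takes a different route from the paper, because the paper gives no proof: the lemma is quoted from \cite{TBYGZ}.

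The usual argument for this statement is algebraic. Suppose $f$ omitted a value $c$. Then $f-c$ would be a unit in the coordinate ring of $SL(2,\mathbb{C})^n$. By Rosenlicht's theorem, the units on a connected algebraic group are constant multiples of characters. Since $SL(2,\mathbb{C})^n$ is perfect, it has no nontrivial characters, so $f-c$, and hence $f$, would be constant.

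You instead build an explicit surjection $\Phi$ from affine space using unipotent factors, and then apply the fundamental theorem of algebra along a line. This route is completely elementary and self-contained. It also reproves triviality of units as a by-product, since your Step 2 shows that a nowhere-vanishing polynomial on $\mathbb{C}^{4n}$ is constant. The units argument is shorter once Rosenlicht's theorem is available, and it covers any irreducible affine variety whose only units are constants, including varieties not dominated by an affine space.

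Two minor points on Step 1:

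\emph{Side of multiplication.} When the lower-left entry is $0$, you should multiply by $L(\pm1)$ on the right. Then the extra factor comes last, and the product has the shape $U L U L$ of $\Phi$; note that the lower-left entry of $\begin{bmatrix}a&b\\0&d\end{bmatrix}L(t)$ is $dt\neq 0$. Multiplying on the left would instead give the shape $L U L U$.

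\emph{Surjectivity of $\Phi$ is more than you need.} Dominance of $\Phi$ suffices: if $f\circ\Phi$ were constant, $f$ would be constant on a Zariski-dense set, hence everywhere. So the three-factor map $U L U$, whose image is the open set of matrices with nonzero lower-left entry, would already do. The step you flagged as the main obstacle is therefore lighter than it looks.
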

	\begin{lemma}\label{1}\cite{TBYGZ}
		For every nontrivial word $w(x,y)$ the image contains every element $z \in SL(2,\mathbb{C})$ with $T(z)\neq \pm2$.
	\end{lemma}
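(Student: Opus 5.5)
The plan is to reduce the statement to a statement about traces and then apply Lemma \ref{onto}. Fix a nontrivial word $w\in \mathbf{F}_2$ and consider the function
$$f_w: SL(2,\mathbb{C})\times SL(2,\mathbb{C})\to \mathbb{C},\qquad (A,B)\mapsto T(w(A,B)).$$
The entries of $w(A,B)$ are polynomials in the entries of $A$, $B$, $A^{-1}$, $B^{-1}$. Since inversion in $SL(2,\mathbb{C})$ is given by polynomials, $f_w$ is a regular function on $SL(2,\mathbb{C})^2$. If $f_w$ is non-constant, Lemma \ref{onto} shows it omits no value in $\mathbb{C}$. So for every $t\in\mathbb{C}$ there are $A,B$ with $T(w(A,B))=t$.

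The key step, which I expect to be the main obstacle, is proving that $f_w$ is not constant. First, $f_w(I,I)=T(I)=2$. So it suffices to find one pair $(A,B)$ with $T(w(A,B))\neq 2$. I would take $A,B$ to be loxodromic generators of a classical Schottky group in $SL(2,\mathbb{C})$: two loxodromic elements whose isometric circles are pairwise disjoint, chosen by the ping-pong construction. Such a group is free on $A,B$, and every nontrivial element of it is loxodromic, so its trace is not in $[-2,2]$. Hence $w(A,B)\neq I$ and $T(w(A,B))\notin[-2,2]$, and in particular $f_w(A,B)\neq 2$.

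A concrete fallback is the free group in $SL(2,\mathbb{Z})$ generated by $\begin{bmatrix}1&2\\0&1\end{bmatrix}$ and $\begin{bmatrix}1&0\\2&1\end{bmatrix}$. That group does contain parabolic elements, so the fallback would need an extra argument. One option is to conjugate the generators by a deformation to make them loxodromic. Another is to use Horowitz's theorem that the trace polynomial $p_w$ of a nontrivial word is non-constant in $(x,y,z)$.

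Finally, let $z\in SL(2,\mathbb{C})$ with $T(z)=t\neq\pm2$. The characteristic polynomial $\lambda^2-t\lambda+1$ has two distinct roots $\lambda,\lambda^{-1}$, because its discriminant $t^2-4$ is nonzero. So $z$ is conjugate in $SL(2,\mathbb{C})$ to $\mathrm{diag}(\lambda,\lambda^{-1})$. By the first step there exist $A,B$ with $T(w(A,B))=t$, and by the same reasoning $w(A,B)$ is also conjugate to $\mathrm{diag}(\lambda,\lambda^{-1})$. Hence $z=Pw(A,B)P^{-1}$ for some $P\in SL(2,\mathbb{C})$. The identity $w(PAP^{-1},PBP^{-1})=Pw(A,B)P^{-1}$ then puts $z$ in the image of the word map.
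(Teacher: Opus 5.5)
Your proposal is correct and follows essentially the same route as the paper: apply Lemma~\ref{onto} to the regular function $(A,B)\mapsto T(w(A,B))$ to reach any trace $t\neq\pm 2$, then use that matrices with the same trace $t\neq\pm2$ are conjugate, together with $w(PAP^{-1},PBP^{-1})=Pw(A,B)P^{-1}$. The one difference is that you actually verify the non-constancy hypothesis of Lemma~\ref{onto}, using a purely loxodromic Schottky pair for which $T(w(A,B))\notin[-2,2]$ while $T(w(I,I))=2$; the paper's proof skips this step and only notes that the trace is an integer polynomial.
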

	\begin{proof}
		Choose $\alpha \neq \pm 2$. Let $G=\{(a_1,b_1,c_1,d_1)\in \mathbb{C}^4|a_{1}d_{1}-b_{1}c_{1}=1\}\subset \mathbb{C}^{4}$. Note that $G\cong SL(2,\mathbb{C})$.
		Define a function $f: G\times G \to \mathbb{C}$ by  $$f((a_{1},b_{1},c_{1},d_{1}),(a_{2},b_{2},c_{2},d_{2}))=T(w(a,b)),$$ where $T$ is the trace and note that it is a polynomial with integer coefficients. Therefore, by the Lemma \ref{onto}, there are elements $a,b\in SL(2,\mathbb{C})$  such that $T(w(a,b))=\alpha$. Let $z\in SL(2,\mathbb{C})$ with $T(z)=\alpha$. Let $u=w(a,b)$. So, $T(u)=T(z)$.
		Thus, $z$ and $u$ are conjugates, i.e., there exist $v\in SL(2,\mathbb{C})$ such that $z=vuv^{-1}$. 
		$$vuv^{-1}=vw(a,b)v^{-1}=w(vav^{-1},vbv^{-1}).$$ Hence $z=w(vav^{-1},vbv^{-1})$. 
		So $z$ is in image of $w$ for all $z\in SL(2,\mathbb{C})$ with $T(z)=\alpha\neq \pm 2$.
	\end{proof}
	\begin{remark}\label{Remark1}
      ~
		\begin{enumerate}
			\item From the proof of Lemma \ref{1}, it is clear that if $z\in \text{Im }w$, then all elements conjugate to $z$ is also there in image of $w$. In particular, $\text{Im }w$ is invariant under action by conjugation.
			\item Thus, in order to check whether the word map $w$ is surjective on $SL(2,\mathbb{C})$ it is sufficient to check whether the elements $z$ with $T(z)=\pm2$ are in the image.
		\end{enumerate}
		
	\end{remark}

	\begin{prop}\label{B1}
		The word map induced by the word $w(a,b)=[[a,b],a[a,b]a^{-1}]$ is surjective on $SL(2,\mathbb{C})$.
	\end{prop}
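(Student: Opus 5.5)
By Remark \ref{Remark1}, Lemma \ref{1} already puts every $z\in SL(2,\mathbb{C})$ with $T(z)\neq \pm 2$ in the image of $w(a,b)=[[a,b],a[a,b]a^{-1}]$. The image is also closed under conjugation. So it remains to show that four elements lie in the image: $I$, $-I$, a representative of $C\left(\begin{bmatrix}1&1\\0&1\end{bmatrix}\right)$, and a representative of $C\left(\begin{bmatrix}-1&1\\0&-1\end{bmatrix}\right)$. The identity is $w(I,I)$. For $-I$ I would use Proposition \ref{a1}: the word map on $SU(2)\subset SL(2,\mathbb{C})$ is surjective, so $-I=w(A,B)$ for some $A,B\in SU(2)$.

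The two non-semisimple classes need genuinely complex inputs. I would first use the trace computation from the proof of Proposition \ref{a1}. That computation only used the Cayley--Hamilton relation $A^2=T(A)A-I$ and the trace identities, so it holds verbatim in $SL(2,\mathbb{C})$:
\[
T(w(A,B))=x^{2}k-2x^{2}+2, \qquad x=T(A),\quad k=T([A,B])=k(x,y,z),
\]
where now $x,y,z\in\mathbb{C}$ are arbitrary. An element $g\neq\pm I$ of $SL(2,\mathbb{C})$ with $T(g)=2$ (resp.\ $-2$) is automatically conjugate to $\begin{bmatrix}1&1\\0&1\end{bmatrix}$ (resp.\ $\begin{bmatrix}-1&1\\0&-1\end{bmatrix}$). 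It therefore suffices to produce one pair $(A,B)$ with $T(w(A,B))=2$ and $w(A,B)\neq I$, and one pair with $T(w(A,B))=-2$ and $w(A,B)\neq -I$.

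For trace $2$ I would take $x=0$, i.e.\ $A=\begin{bmatrix}0&1\\-1&0\end{bmatrix}$. Then $T(w)=2$ for every $B$. I would choose a simple $B$, for instance $B=\begin{bmatrix}t&0\\0&t^{-1}\end{bmatrix}$ or a triangular matrix with one free parameter, and compute $w(A,B)$ explicitly. The aim is to find a value of the parameter at which an off-diagonal entry is nonzero.

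For trace $-2$ I need $x^{2}(k-2)=-4$. I would fix, say, $x=1$ and pick $y,z$ with $k(1,y,z)=-2$, which is solvable over $\mathbb{C}$, for example with $y=0$ and $z^{2}=-1$. I would then realize the triple $(x,y,z)$ by explicit matrices, e.g.\ a companion-type $A$ of trace $x$ and a $B$ chosen so that $T(B)=y$ and $T(AB)=z$. Finally I would compute $w(A,B)$ and check that it is not scalar.

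The main obstacle is this last non-scalarity check. The trace condition is easy to arrange, but one must rule out landing exactly on $\pm I$, and the trace polynomial cannot see that. I would handle it by a direct (SageMath-assisted) multiplication for the concrete pairs above, perturbing the free parameter if an unlucky choice gives a scalar. If a scalar output persisted on a whole family, I would switch to a non-diagonal $B$, e.g.\ $B=\begin{bmatrix}1&s\\0&1\end{bmatrix}$ combined with a lower-triangular factor.
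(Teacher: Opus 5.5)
The overall reduction matches the paper: Lemma~\ref{1} plus conjugation invariance leave the four classes $I$, $-I$ and the two non-semisimple classes, with $I=w(I,I)$ and $-I$ coming from Proposition~\ref{a1}. The gap is in the step for the non-semisimple classes, which rests on a misread formula. The polynomial $x^{2}k-2x^{2}+2$ in the proof of Proposition~\ref{a1} is $p_{uv}$, the trace of the \emph{product} $uv$, where $u=[A,B]$ and $v=A[A,B]A^{-1}$. It is not the trace of $w(A,B)=[u,v]$. Since $T(u)=T(v)=k$, the correct formula is
\[
T(w(A,B))=2k^{2}+P^{2}-k^{2}P-2,\qquad P=2+x^{2}(k-2).
\]
Hence $T(w)-2=(P-2)(P+2-k^{2})$ and $T(w)+2=P^{2}-k^{2}P+2k^{2}$.

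This breaks both of your constructions:
\begin{itemize}
\item Your trace $-2$ choice ($x=1$, $k=-2$) gives $P=-2$ and $T(w)=18$.
\item Your trace $2$ choice $x=0$ does give $T(w)=2$, but for a degenerate reason. The product $uv$ is conjugate to $[A^{-2},B^{-1}]$, and $A^{2}=-I$ makes this equal to $I$. So $A[A,B]A^{-1}=[A,B]^{-1}$ and $w(A,B)=[u,u^{-1}]=I$ for \emph{every} $B$.
\end{itemize}
That is why your diagonal $B$ returns $I$, and $B=\begin{bmatrix}1&s\\0&1\end{bmatrix}$ does too. Your fallback of changing $B$ cannot succeed, because the obstruction is the choice of $A$. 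The other way to get $P=2$, namely $k=2$, is equally useless. In that case $A,B$ have a common eigenvector, so $u$ and $v$ lie in one abelian unipotent subgroup and $w=I$.

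To repair the argument for trace $2$, use the branch $x^{2}=k+2$ with $x\neq 0$ and $k\neq 2$. For instance, $A=\mathrm{diag}(2,1/2)$ and $B=\begin{bmatrix}1&1\\-1&0\end{bmatrix}$ give $u=\begin{bmatrix}4&3\\0&1/4\end{bmatrix}$ and $v=\begin{bmatrix}4&12\\0&1/4\end{bmatrix}$. These do not commute, so $w(A,B)$ is a nontrivial unipotent.

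For trace $-2$, proceed as follows. Fix $k\neq 0,2$, let $P$ be a root of $P^{2}-k^{2}P+2k^{2}=0$, and set $x^{2}=(P-2)/(k-2)$. Then pick $y,z\in\mathbb{C}$ with $k(x,y,z)$ equal to this value of $k$, and realize $(x,y,z)$ by a pair in $SL(2,\mathbb{C})$. Non-scalarity is then automatic: $[u,v]=-I$ would force $u$ and $v$ to anticommute, hence $T(u)=k=0$.

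The paper avoids trace formulas altogether here. It checks by a radical-membership computation that the $(1,2)$ entry of $w$ does not vanish identically on the trace $\pm 2$ locus of an explicit family of pairs.
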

	
	\begin{proof}
		By using the Lemma \ref{1}, it is enough to show that all elements of $SL(2,\mathbb{C})$ with trace $\pm2$ is in the image of $w$. 
		
		First, we will show that all elements in $C\left(
		\begin{bmatrix}
			-1 & 1 \\
			0 & -1
		\end{bmatrix}
		\right) \bigcup C\left(
		\begin{bmatrix}
			1 & 1 \\
			0 & 1
		\end{bmatrix}
		\right)$ is in the image of $w$. From Remark \ref{Remark1}, it enough to show there is an element in image from each conjugacy class. i.e., we have to show that there are matrices $a,b\in SL(2,\mathbb{C})$ such that, \begin{center}
			$w(a,b)=	\begin{bmatrix}
				p_{11} & p_{12} \\
				p_{21} & p_{22}
			\end{bmatrix}$ with $p_{11}+p_{22}=\pm2;  p_{12}\neq0$.
		\end{center}
		We may look for these pairs among the matrices $a=	\begin{bmatrix}
			0 & x \\
			y & z
		\end{bmatrix}$  and $b= 	\begin{bmatrix}
			1 & t \\
			0 & 1
		\end{bmatrix}$.

Consider the polynomial ring $\mathbb{Q}[x,y,z,t]$ over rationals. Let $I$  be the ideal generated by the polynomials $det(a)-1$, $T(A)-2$ and $J$ be the ideal generated by the polynomials $det(a)-1$, $T(A)+2$ in $\mathbb{Q}[x,y,z,t]$. These ideals correspond to the affine subsets $S\subset SL(2,\mathbb{C})^{2}$ and $T\subset SL(2,\mathbb{C})^{2}$ respectively(with identification $(x,y,z,t)$ as $a=	\begin{bmatrix}
			0 & x \\
			y & z  
		\end{bmatrix}$ and $b= 	\begin{bmatrix}
			1 & t \\
			0 & 1 
		\end{bmatrix}$).\\


		The following SageMath computation shows that the polynomial $p_{12}$ is not in the radical of $I$ and $J$. 
		\begin{lstlisting}[language=Python]
			R.<t, x, y, z> = PolynomialRing(QQ, 4)
			X = Matrix(R, 2, 2, [0, x, y, z])
			Y = Matrix(R, 2, 2, [1, t, 0, 1])
			X1 = Matrix(R, 2, 2, [z, -x, -y, 0])
			Y1 = Matrix(R, 2, 2, [1, -t, 0, 1])
			
			C = X * Y * X1 * Y1
			q11, q12 = C[0,0], C[0,1]
			q21, q22 = C[1,0], C[1,1]
			
			C1 = Matrix(R, 2, 2, [q22, -q12, -q21, q11])
			
			D = X * C * X1
			r11, r12 = D[0,0], D[0,1]
			r21, r22 = D[1,0], D[1,1]
			D1 = Matrix(R, 2, 2, [r22, -r12, -r21, r11])
			
			A = C * D * C1 * D1
			TA = A.trace()
			
			p12 = A[0,1]
			
			I = R.ideal([x*y + 1, TA - 2])
			J = R.ideal([x*y + 1, TA + 2])
			
			p12_in_radical_I = p12 in I.radical()
			p12_in_radical_J = p12 in J.radical()
			
			print("Is p12 in radical of I?", p12_in_radical_I)
			print("Is p12 in radical of J?", p12_in_radical_J)
			Is p12 in radical of I? False
			Is p12 in radical of J? False
		\end{lstlisting}

        So the polynomial $p_{12}$ is not vanishing on $S$ and $T$.
		This guarentees the existence of $a,b\in SL(2,\mathbb{C})$ with $w(a,b) \in C\left(
		\begin{bmatrix}
			1 & 1 \\
			0 & 1
		\end{bmatrix}
		\right)$.  Also same for $C\left(
		\begin{bmatrix}
			-1 & 1 \\
			0 & -1
		\end{bmatrix}
		\right)$.
		
		Hence, we have 
		$$C\left(
		\begin{bmatrix}
			-1 & 1 \\
			0 & -1
		\end{bmatrix}
		\right) \bigcup C\left(
		\begin{bmatrix}
			1 & 1 \\
			0 & 1
		\end{bmatrix} \right)\subset \text{Im }w.$$
		Now, to prove surjectivity it is enough to show $-I\in\text{Im }w$. Surjectivity of the word $w$ in $SU(2)$ will give $-I\in \text{Im }w$.
		Hence, the given word map is surjective on $SL(2,\mathbb{C})$.
		\end{proof}

	\begin{prop}\label{B2}
		The word map induced by the word $w(a,b)=[[a,b],b[a,b]b^{-1}]]$ is surjective in $SL(2,\mathbb{C})$.
	\end{prop}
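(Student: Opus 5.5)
The proof will follow the template of Proposition \ref{B1}. By Lemma \ref{1}, every element of $SL(2,\mathbb{C})$ with trace $\neq \pm 2$ already lies in the image of $w(a,b)=[[a,b],b[a,b]b^{-1}]$. So it remains to handle the four classes of trace $\pm2$: $\{I\}$, $\{-I\}$, $C\left(\begin{bmatrix}1&1\\0&1\end{bmatrix}\right)$ and $C\left(\begin{bmatrix}-1&1\\0&-1\end{bmatrix}\right)$. The identity is trivially in the image, since $w(I,I)=I$. By Remark \ref{Remark1}, it suffices to exhibit one element in each of the remaining three classes.

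For $-I$, I will use Proposition \ref{a3}. It shows that the word map is surjective on $SU(2)\subset SL(2,\mathbb{C})$. In particular there are $A,B\in SU(2)$ with $w(A,B)=-I$, and these also serve in $SL(2,\mathbb{C})$.

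For the two unipotent-type classes, I will search among pairs of the same shape as before: $a=\begin{bmatrix}0&x\\y&z\end{bmatrix}$ and $b=\begin{bmatrix}1&t\\0&1\end{bmatrix}$, with $\det a=1$, i.e. $xy+1=0$. A matrix of trace $\pm2$ that is not $\pm I$ is conjugate to $\pm\begin{bmatrix}1&1\\0&1\end{bmatrix}$. So it suffices to find points of the varieties
\[
S=\{xy+1=0,\ T(w(a,b))=2\}, \qquad T=\{xy+1=0,\ T(w(a,b))=-2\}
\]
at which the off-diagonal entry $p_{12}$ of $w(a,b)$ does not vanish. Note that here $[a,b]b[a,b]b^{-1}$ simplifies to $ab^{2}a^{-1}b^{-2}$, as in the proof of Proposition \ref{a3}, which keeps the entries manageable. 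I will compute $w(a,b)$ symbolically in $\mathbb{Q}[t,x,y,z]$ by the same SageMath code as in Proposition \ref{B1}, replacing $D=X C X^{-1}$ by $D=Y C Y^{-1}$ (with $Y^{-1}=Y1$). Then I will check that $p_{12}\notin\sqrt{I}$ and $p_{12}\notin\sqrt{J}$, where $I=(xy+1,\,TA-2)$ and $J=(xy+1,\,TA+2)$. By the Nullstellensatz, this gives points of $S$ and of $T$ with $p_{12}\neq0$. Each such point yields $w(a,b)$ with trace $\pm 2$ that is not scalar, hence lying in the required conjugacy class.

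The main obstacle is ensuring that both varieties are nonempty and that $p_{12}$ does not vanish identically on them. If the ansatz proves degenerate for one sign (e.g.\ $T$ empty, or $p_{12}\in\sqrt{J}$), I would enlarge the family. First I would let $b$ be a general upper triangular matrix $\begin{bmatrix}s&t\\0&s^{-1}\end{bmatrix}$, or swap the roles of $a$ and $b$. As a fallback, I would argue directly: $T(w)$ is a non-constant regular function on the slice, so by Lemma \ref{onto} it attains $\pm2$. Then I would check that the locus where $w(a,b)=\pm I$ is a proper subvariety of the fibre, e.g.\ by a dimension count or by exhibiting a single explicit point.
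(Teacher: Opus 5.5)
You follow the paper's strategy: Lemma \ref{1} for trace $\neq\pm2$, Proposition \ref{a3} for $-I$, and a radical-membership test for the non-scalar classes of trace $\pm2$. However, the ansatz you commit to provably fails for the class of $\begin{bmatrix}1&1\\0&1\end{bmatrix}$. Put $k=T([a,b])$. We have $T(b[a,b]b^{-1})=k$ and $T([a,b]\,b[a,b]b^{-1})=T([a,b^{2}])=T(b)^{2}(k-2)+2$. The commutator trace formula then factors as
\[
T(w(a,b))-2=T(b)^{2}(k-2)^{2}\bigl(T(b)^{2}-k-2\bigr).
\]
If $T(b)=\pm2$, the last factor is $2-k$, so $T(w(a,b))=2$ forces $k=2$. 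Then $[a,b]$ and $b[a,b]b^{-1}$ both have trace $2$ and their product has trace $2$. Hence they share a fixed line, they commute, and $w(a,b)=I$.

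In your coordinates this is explicit. With $u=-y^{2}t^{2}$,
\[
w(a,b)=\begin{bmatrix}1-2u^{2} & 2tu(1+u)\\ -2y^{2}tu(1-2u) & 1+2u^{2}+4u^{3}\end{bmatrix}.
\]
So $T(w)=2$ means $y^{6}t^{6}=0$. Since $xy=-1$, this gives $t=0$, hence $b=I$ and $p_{12}=0$ on all of $S$. Thus $p_{12}\in\sqrt{I}$, and your Sage test returns True for the $+2$ sign. The $-2$ sign is fine with your ansatz: there $u^{3}=-1$, and $u=e^{\pm i\pi/3}$ gives $p_{12}\neq0$.

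So enlarging the family is not optional: you need $T(b)\neq\pm2$, because nontrivial unipotent values can only come from the factor $T(b)^{2}-T([a,b])-2$. Your first fallback does work. For instance, take $x=-1$, $y=z=1$ and $b=\begin{bmatrix}2&3/2\\0&1/2\end{bmatrix}$. Then $[a,b]=\begin{bmatrix}1/4&-3/4\\0&4\end{bmatrix}$, $b[a,b]b^{-1}=\begin{bmatrix}1/4&33/4\\0&4\end{bmatrix}$, and $w(a,b)=\begin{bmatrix}1&-135/64\\0&1\end{bmatrix}$, a nontrivial unipotent.

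This is also why the paper's ansatz $b=\begin{bmatrix}w&1\\0&z\end{bmatrix}$ with $wz=1$ is adequate: it lets $T(b)$ vary. Note, however, that the listing printed there tests the trace and $(1,2)$-entry of $D=b[a,b]b^{-1}$ rather than of $w$. Your plan to form $CDC^{-1}D^{-1}$ is the correct computation.

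Your last fallback is not sound as stated. Lemma \ref{onto} concerns regular functions on $SL(2,\mathbb{C})^{n}$, not on a slice. More seriously, on your slice $T(w)$ does attain $2$, but only where $w=I$, so the ``proper subvariety'' check fails there. On all of $SL(2,\mathbb{C})^{2}$, a dimension count cannot separate either. The hypersurfaces $\{T(b)=0\}$ (where $b^{2}=-I$ gives $b[a,b]b^{-1}=[a,b]^{-1}$) and $\{T([a,b])=2\}$ both lie inside $\{T(w)=2\}$, with $w=I$ on them. Only an explicit point, such as the one above, closes the $+2$ case.
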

	
	\begin{proof}
    Similar to the Proposition \ref{B1} we need to show that all elements in $C\left(
		\begin{bmatrix}
			-1 & 1 \\
			0 & -1
		\end{bmatrix}
		\right) \bigcup C\left(
		\begin{bmatrix}
			1 & 1 \\
			0 & 1
		\end{bmatrix}
		\right)$ are in the image of $w$. 
We have to show that there are matrices $a,b\in SL(2,\mathbb{C})$ such that, \begin{center}
			$w(a,b)=	\begin{bmatrix}
				p_{11} & p_{12} \\
				p_{21} & p_{22}
			\end{bmatrix}$ with $p_{11}+p_{22}=\pm2;  p_{12}\neq0$.
		\end{center}

		
		We may look for these pairs among the matrices $a=	\begin{bmatrix}
			1 & x \\
			y & 1
		\end{bmatrix}$  and $b= 	\begin{bmatrix}
			w & 1 \\
			0 & z
		\end{bmatrix}$.

		
		Let $I\subset \mathbb{Q}[x,y,z,w]$ be the ideal generated by the polynomials $det(a)-1$, $det(b)-1$, $T(A)-2$ and $J$ be the ideal generated by the polynomial $det(a)-1$, $det(b)-1$, $T(A)+2$. These ideals correspond to the affine subsets $S\subset SL(2,\mathbb{C})^{2}$ and $T\subset SL(2,\mathbb{C})^{2}$ respectively (with identification $(x,y,w,z)$ as $a=	\begin{bmatrix}
			1 & x \\
			y & 1
		\end{bmatrix}$ and $b= 	\begin{bmatrix}
			w & 1 \\
			0 & z
		\end{bmatrix}$).\\

		Following SageMath computation shows that the polynomial $p_{12}$ is non-zero in $S$ and $T$. 
		\begin{lstlisting}[language=Python]
			R.<w, x, y, z> = PolynomialRing(QQ, 4)
			
			X = Matrix(R, 2, 2, [1, x, y, 1])
			Y = Matrix(R, 2, 2, [w, 1, 0, z])
			X1 = Matrix(R, 2, 2, [1, -x, -y, 1])
			Y1 = Matrix(R, 2, 2, [w, -1, 0, z])
			
			C = X * Y * X1 * Y1
			q11, q12 = C[0,0], C[0,1]
			q21, q22 = C[1,0], C[1,1]
			
			C1 = Matrix(R, 2, 2, [q22, -q12, -q21, q11])
			
			D = Y * C * Y1
			r11, r12 = D[0,0], D[0,1]
			r21, r22 = D[1,0], D[1,1]
			D1 = Matrix(R, 2, 2, [r22, -r12, -r21, r11])
			
			TD = D.trace()
			
			p12 = D[0,1]
			
			
			I = R.ideal([x*y, w*z-1, TD - 2])
			J = R.ideal([x*y, w*z-1, TD + 2])
			
			r12_in_radical_I = r12 in I.radical()
			r12_in_radical_J = r12 in J.radical()
			
			print("Is p12 in radical of I?", p12_in_radical_I)
			print("Is p12 in radical of J?", p12_in_radical_J)
			Is p12 in radical of I? False
			Is p12 in radical of J? False
		\end{lstlisting}
		
		This guarentees the existence of $a,b\in SL(2,\mathbb{C})^{2}$ with $w(a,b) \in C\left(
		\begin{bmatrix}
			1 & 1 \\
			0 & 1
		\end{bmatrix}
		\right)$. Also same for $C\left(
		\begin{bmatrix}
			-1 & 1 \\
			0 & -1
		\end{bmatrix}
		\right)$.
		
		Hence, we have 
		$$C\left(
		\begin{bmatrix}
			-1 & 1 \\
			0 & -1
		\end{bmatrix}
		\right) \bigcup C\left(
		\begin{bmatrix}
			1 & 1 \\
			0 & 1
		\end{bmatrix} \right)\subset \text{Im }w.$$
		To establish the surjectivity of the word map, it suffices to show that $-I \in \text{Im }w$. Since the word map $w$ is surjective on $SU(2)$, it follows that $-I$ lies in its image. Therefore, the word map is surjective on $SL(2,\mathbb{C})$.
		\end{proof}
	
	\begin{prop}\label{B3}
		The word map induced by the word $w(a,b)=[[a,b],ab[a,b]b^{-1}a^{-1}]]$ is surjective on $SL(2,\mathbb{C})$.
	\end{prop}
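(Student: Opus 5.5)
By Lemma \ref{1} and Remark \ref{Remark1}, the image of $w(a,b)=[[a,b],ab[a,b]b^{-1}a^{-1}]$ is conjugation invariant and already contains every element of trace $\neq\pm2$. So only four conjugacy classes remain to be reached: $I$, $-I$, and the two nontrivial unipotent-type classes $C\left(\begin{bmatrix}1&1\\0&1\end{bmatrix}\right)$ and $C\left(\begin{bmatrix}-1&1\\0&-1\end{bmatrix}\right)$. The identity is trivial, since $w(I,I)=I$. For $-I$ we use Proposition \ref{a5} (part (c) of Theorem \ref{A} with $n=1$), which gives $-I\in w(SU(2)\times SU(2))\subset w(SL(2,\mathbb{C})\times SL(2,\mathbb{C}))$.

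The substantive step is to produce $a,b\in SL(2,\mathbb{C})$ with $T(w(a,b))=\pm2$ and $w(a,b)\neq\pm I$; then $w(a,b)$ is a non-scalar element of trace $\pm2$, hence lies in the corresponding unipotent-type class. Following Propositions \ref{B1} and \ref{B2}, I will work on a low-dimensional family, for instance $a=\begin{bmatrix}0&x\\y&z\end{bmatrix}$ and $b=\begin{bmatrix}1&t\\0&1\end{bmatrix}$ with $xy=-1$. I will form the polynomial entries $p_{ij}$ of $w(a,b)$, with $C=aba^{-1}b^{-1}$, $D=(ab)C(ab)^{-1}$ and $w=CDC^{-1}D^{-1}$, using adjugates for inverses exactly as in the Sage code above. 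Let $I_\pm$ be the ideals in $\mathbb{Q}[x,y,z,t]$ generated by $xy+1$ and $T(w(a,b))\mp2$. I will check by a Gröbner/radical computation that $p_{12}\notin\sqrt{I_+}$ and $p_{12}\notin\sqrt{I_-}$. By the Nullstellensatz, this gives a complex point of each variety $V(I_\pm)$ where $p_{12}\neq0$.

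There is a conceptual shortcut I would first try, in order to avoid relying on machine computation. Since $D=(ab)C(ab)^{-1}$, we have $T(w)=p_w(x,y,z)=z^2k-2z^2+2$, as computed in Proposition \ref{a5}, where $k=T([a,b])$ and $z=T(ab)$ (here $z$ denotes the trace, not the matrix entry). The identity $T(w)=z^2k-2z^2+2$ is a trace identity valid in all of $SL(2,\mathbb{C})$. To get $T(w)=2$, I can take $k=2$, i.e.\ $[a,b]$ of trace $2$. To get $T(w)=-2$, I need $z^2(k-2)=-4$, e.g.\ $z=2$, $k=1$, or any choice with $z^2(2-k)=4$.

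Such trace triples are realized by actual pairs $(a,b)$, since the map $(a,b)\mapsto(T(a),T(b),T(ab))$ onto $\mathbb{C}^3$ is surjective for $SL(2,\mathbb{C})$. One must then check that $w(a,b)$ is not $\pm I$ at some such point. This is where the main obstacle lies: the level sets of the trace are positive-dimensional, and we must rule out that $w\equiv\pm I$ identically on them. For $T(w)=2$ with $k=2$, there is a danger that $[a,b]$ is trivial, making $w=I$. So we choose $[a,b]$ parabolic and non-identity, which means $a,b$ share a fixed line but do not commute, and then test whether $[a,b]$ commutes with $(ab)[a,b](ab)^{-1}$. Two parabolics commute only if they fix the same line, so it suffices to choose $ab$ not preserving the fixed line of $[a,b]$. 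If this hand argument gets messy, I will fall back on the radical-membership computation described above, which settles the non-scalar condition directly. Combining the four classes gives surjectivity.
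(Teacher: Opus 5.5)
Your main line (the reduction via Lemma~\ref{1} and Remark~\ref{Remark1}, $-I$ from Proposition~\ref{a5}, and the radical test for $p_{12}$ on the family $a=\begin{bmatrix}0&x\\y&z\end{bmatrix}$, $b=\begin{bmatrix}1&t\\0&1\end{bmatrix}$, $xy=-1$) is exactly the paper's proof. The paper's printed code for this case declares the generators as $t,b,c,d$ but uses $x,y,z$, so you really must rerun the computation yourself. The ``conceptual shortcut'' you would try first, however, is wrong as written, for two independent reasons.

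First, $z^2k-2z^2+2$ is not $T(w)$. Write $u=[a,b]$ and $v=(ab)u(ab)^{-1}$. What Proposition~\ref{a5} computes is $m:=T(uv)=z^2(k-2)+2$, the third polynomial in Lemma~\ref{L2}; the paper's sentence calling it the polynomial of $w$ is a slip. Since $w=[u,v]$ and $T(u)=T(v)=k$, the commutator trace formula gives
\[
T(w)=2k^2+m^2-k^2m-2=2+z^2(k-2)^2\,(z^2-k-2).
\]
So your choice $z=2$, $k=1$ gives $T(w)=6$, not $-2$.

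Second, the branch $k=2$ can never give a non-scalar element. $T([a,b])=2$ means $a,b$ have a common eigenline $\ell$, so $ab$ preserves $\ell$ too. Then $u$ and $v$ are both unipotent fixing $\ell$, hence they commute. Thus $w\equiv I$ on $\{k=2\}$, and the step ``choose $ab$ not preserving the fixed line of $[a,b]$'' is impossible.

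Once corrected, the shortcut does yield a computer-free proof.
\begin{itemize}
\item For trace $2$, use the branch $z^2=k+2$ with $k\neq 2$. For example, $a=\begin{bmatrix}0&-2/3\\3/2&1\end{bmatrix}$, $b=\begin{bmatrix}1&1\\0&1\end{bmatrix}$ (so $k=17/4$, $z=5/2$) gives $w(a,b)=\begin{bmatrix}361&120\\-1080&-359\end{bmatrix}$. This is a point of the paper's family with $p_{12}\neq 0$.
\item For trace $-2$, you need $z^2(k-2)^2(z^2-k-2)=-4$ with $k\neq 0$. The condition $k\neq0$ is needed because $w=-I$ forces $uv=-vu$ and so $T(u)=0$; this is what happens at the $SU(2)$ point $(1,0,1)$ of Proposition~\ref{a5}. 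Taking $k=3$, $z=1$, for example $a=\begin{bmatrix}0&1\\-1&2\end{bmatrix}$, $b=\begin{bmatrix}1&1\\0&1\end{bmatrix}$, gives $w(a,b)=\begin{bmatrix}-1&0\\6&-1\end{bmatrix}$. Here $p_{12}=0$, so the right test is ``non-scalar'', not ``$p_{12}\neq 0$''.
\end{itemize}
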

	
	\begin{proof}
		
		We begin by showing that all elements in the conjugacy classes
		\[
		C\left(
		\begin{bmatrix}
			-1 & 1 \\
			0 & -1
		\end{bmatrix}
		\right) \cup 
		C\left(
		\begin{bmatrix}
			1 & 1 \\
			0 & 1
		\end{bmatrix}
		\right)
		\]
		are contained in the image of $w$. According to Remark \ref{Remark1}, it is enough to find one representative from each conjugacy class in the image of $w$. That is, we aim to find matrices $a, b \in SL(2,\mathbb{C})$ such that
		\[
		w(a,b) =
		\begin{bmatrix}
			p_{11} & p_{12} \\
			p_{21} & p_{22}
		\end{bmatrix}, \quad \text{with } p_{11} + p_{22} = \pm 2 \text{ and } p_{12} \neq 0.
		\]
		
		To construct such elements, we consider matrices of the form
		\[
		a = \begin{bmatrix}
			0 & x \\
			y & z
		\end{bmatrix}, \quad 
		b = \begin{bmatrix}
			1 & t \\
			0 & 1
		\end{bmatrix}.
		\]
		
	Let $I \subset \mathbb{Q}[x,y,z,t]$ be the ideal generated by the polynomials corresponding to $\det(a) - 1$ and the trace condition $\text{Tr}(A) - 2$, and let $J$ be the ideal generated by $\det(a) - 1$ and $\text{Tr}(A) +2$. These ideals define affine subsets $S \subset SL(2,\mathbb{C})^2$ and $T \subset SL(2,\mathbb{C})^2$, respectively, where we identify the tuple $(x,y,z,t)$ with the above choices of $a$ and $b$.

		Following SageMath computation shows that the polynomial $p_{12}$ is non-zero on $S$ and $T$. 
		\begin{lstlisting}[language=Python]
			R.<t, b, c, d> = PolynomialRing(QQ, 4)
			X = Matrix(R, 2, 2, [0, x, y, z])
			Y = Matrix(R, 2, 2, [1, t, 0, 1])
			X1 = Matrix(R, 2, 2, [z, -x, -y, 0])
			Y1 = Matrix(R, 2, 2, [1, -t, 0, 1])
			
			C = X * Y * X1 * Y1
			q11, q12 = C[0,0], C[0,1]
			q21, q22 = C[1,0], C[1,1]
			
			C1 = Matrix(R, 2, 2, [q22, -q12, -q21, q11])
			
			D =  X * Y * C  * Y1 * X1
			r11, r12 = D[0,0], D[0,1]
			r21, r22 = D[1,0], D[1,1]
			D1 = Matrix(R, 2, 2, [r22, -r12, -r21, r11])
			
			A = C * D * C1 * D1
			TA = A.trace()
			
			p12 = A[0,1]
			
			I = R.ideal([x*y + 1, TA - 2])
			J = R.ideal([x*y + 1, TA + 2])
			
			p12_in_radical_I = p12 in I.radical()
			p12_in_radical_J = p12 in J.radical()
			
			print("Is p12 in radical of I?", p12_in_radical_I)
			print("Is p12 in radical of J?", p12_in_radical_J)
			Is p12 in radical of I? False
			Is p12 in radical of J? False
		\end{lstlisting}
		
		This guarantees the existence of $a,b\in SL(2,\mathbb{C})$ with $w(a,b) \in C\left(
		\begin{bmatrix}
			1 & 1 \\
			0 & 1
		\end{bmatrix}
		\right)$. Also same for $C\left(
		\begin{bmatrix}
			-1 & 1 \\
			0 & -1
		\end{bmatrix}
		\right)$.
		
		Hence we have 
		$$C\left(
		\begin{bmatrix}
			-1 & 1 \\
			0 & -1
		\end{bmatrix}
		\right) \bigcup C\left(
		\begin{bmatrix}
			1 & 1 \\
			0 & 1
		\end{bmatrix} \right)\subset \text{Im }w.$$
		Now to prove surjectivity it is enough to show $-I\in\text{Im }w$. Surjectivity of the word $w$ in $SU(2)$ will give that $-I\in \text{Im }w$.
		Hence, the given word map is surjective on $SL(2,\mathbb{C})$.
	\end{proof}

		\begin{prop}\label{B4}
		The word map induced by the word $w(a,b)=[[a,b],ab^{2}[a,ab]b^{-2}a^{-1}]$ is surjective on $SL(2,\mathbb{C})$.
	\end{prop}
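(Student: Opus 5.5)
Following the scheme of Propositions \ref{B1}--\ref{B3}, I would split $SL(2,\mathbb{C})$ into the classes listed at the start of this section and handle each one. The stated word has $[a,ab]$ in the middle; this is presumably a typo for $[a,b]$, i.e.\ $w_4$ of Theorem \ref{B}. With the word as printed the plan is the same, and only the $SU(2)$ input changes. The statement rests on three points.

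\emph{Step 1.} By Lemma \ref{1}, every $z\in SL(2,\mathbb{C})$ with $T(z)\neq\pm2$ lies in the image of $w$, since $w$ is a nontrivial word. By Remark \ref{Remark1}, the image is conjugation invariant. It therefore remains to treat $I$, $-I$, and the two unipotent-type classes $C\left(\begin{bmatrix}1&1\\0&1\end{bmatrix}\right)$ and $C\left(\begin{bmatrix}-1&1\\0&-1\end{bmatrix}\right)$. The identity is trivially in the image, taking $a=b=I$.

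\emph{Step 2.} For $-I$, I would invoke the $SU(2)$ result. Proposition \ref{a8} gives surjectivity of $[[a,b],ab^{2}[a,b]b^{-2}a^{-1}]$ on $SU(2)$, and $SU(2)\subset SL(2,\mathbb{C})$, so $-I\in \text{Im }w$. If the word is literally taken with $[a,ab]$, I would instead check directly via Lemma \ref{L2} that $p_u=p_v=p_{uv}=0$ has a point in $\tau$. Here $u=[a,b]$ and $v=ab^2[a,ab]b^{-2}a^{-1}$, and $p_v=p_{[a,ab]}=k(x,y,z)$ since $[a,ab]=a[a,b]a^{-1}$. So the computation reduces to one trace polynomial, to be evaluated at candidate points such as $(1,1,0)$ or $(1,0,1)$.

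\emph{Step 3.} This is the main computational step. I need $a,b$ with $T(w(a,b))=\pm2$ and $w(a,b)\neq\pm I$; given the trace, the condition $p_{12}\neq0$ suffices. I would use the ansatz $a=\begin{bmatrix}0&x\\y&z\end{bmatrix}$, $b=\begin{bmatrix}1&t\\0&1\end{bmatrix}$. I would form the ideals $I=(xy+1,\,T(w)-2)$ and $J=(xy+1,\,T(w)+2)$ in $\mathbb{Q}[x,y,z,t]$ and check, as in the SageMath code above, that the $(1,2)$-entry $p_{12}$ of $w(a,b)$ lies in neither radical. By the Nullstellensatz, $p_{12}$ then fails to vanish somewhere on each variety, which yields representatives of both classes.

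The obstacle is ensuring the varieties $V(I)$ and $V(J)$ are nonempty and that $p_{12}$ is genuinely nonvanishing there; the radical computation certifies exactly this. If the ansatz degenerates, so that $T(w)\equiv2$ identically, I would switch to a triangular $b=\begin{bmatrix}w&1\\0&z\end{bmatrix}$ as in Proposition \ref{B2}. Since every nonidentity trace-$\pm2$ element is conjugate to one of the two Jordan-type representatives, Steps 1--3 together give surjectivity.
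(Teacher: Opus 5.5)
Your proposal is correct and is essentially the paper's proof. Lemma~\ref{1} and conjugation invariance reduce everything to the trace-$\pm2$ classes, $-I$ comes from the $SU(2)$ surjectivity of Proposition~\ref{a8}, and the two Jordan-type classes come from checking that $p_{12}$ lies in neither $\sqrt{I}$ nor $\sqrt{J}$ for an ansatz with $b$ upper unipotent. The differences are cosmetic: you put the zero in the $(1,1)$ rather than the $(2,2)$ entry of $a$, and you state explicitly the identity case and the reading of $[a,ab]$ as $[a,b]$, which matches $w_4$ in Theorem~\ref{B} and the paper's proof. Your ansatz does succeed: $x=-1$, $y=1$, $z=-1$, $t=1$ gives $w(a,b)=\begin{bmatrix}5&6\\-6&-7\end{bmatrix}$, and $x=-1$, $y=1$, $z=-\tfrac12$, $t=\tfrac32$ gives $w(a,b)=\begin{bmatrix}361&180\\-720&-359\end{bmatrix}$.
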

	
	\begin{proof}
        According to Remark \ref{Remark1}, it is enough to find one representative from each conjugacy class in the image of $w$. That is, we aim to find matrices $a, b \in SL(2,\mathbb{C})$ such that
		\[
		w(a,b) =
		\begin{bmatrix}
			p_{11} & p_{12} \\
			p_{21} & p_{22}
		\end{bmatrix}, \quad \text{with } p_{11} + p_{22} = \pm 2 \text{ and } p_{12} \neq 0.
		\]
		
		To construct such elements, we consider matrices of the form
		\[
		a = \begin{bmatrix}
			x & y \\
			z & 0
		\end{bmatrix}, \quad 
		b = \begin{bmatrix}
			1 & t \\
			0 & 1
		\end{bmatrix}.
		\]
		Let $I \subset \mathbb{Q}[x,y,z,t]$ be the ideal generated by the polynomials corresponding to $\det(a) -1$ and the trace condition $\text{Tr}(A) - 2$, and let $J$ be the ideal generated by $\det(a) -1$ and $\text{Tr}(A) +2$. These ideals define affine subsets $S \subset SL(2,\mathbb{C})^2$ and $T \subset SL(2,\mathbb{C})^2$, respectively, where we identify the tuple $(x,y,z,t)$ with the above choices of $a$ and $b$.

		The following SageMath computation shows that the polynomial $p_{12}$ is non-zero on $S$ and $T$. 
		\begin{lstlisting}[language=Python]
			R.<t, b, c, d> = PolynomialRing(QQ, 4)
			X = Matrix(R, 2, 2, [x, y, z, 0])
			Y = Matrix(R, 2, 2, [1, t, 0, 1])
			X1 = Matrix(R, 2, 2, [0, -y, -z, x])
			Y1 = Matrix(R, 2, 2, [1, -t, 0, 1])
			
			C = X * Y * X1 * Y1
			q11, q12 = C[0,0], C[0,1]
			q21, q22 = C[1,0], C[1,1]
			
			C1 = Matrix(R, 2, 2, [q22, -q12, -q21, q11])
			
			D = X * Y * Y * C * Y1 * Y1 * Y1 * X1
			r11, r12 = D[0,0], D[0,1]
			r21, r22 = D[1,0], D[1,1]
			D1 = Matrix(R, 2, 2, [r22, -r12, -r21, r11])
			
			A = C * D * C1 * D1
			TA = A.trace()
			
			p12 = A[0,1]
			
			I = R.ideal([y*z + 1, TA - 2])
			J = R.ideal([y*z + 1, TA + 2])
			
			p12_in_radical_I = p12 in I.radical()
			p12_in_radical_J = p12 in J.radical()
			
			print("Is p12 in radical of I?", p12_in_radical_I)
			print("Is p12 in radical of J?", p12_in_radical_J)
			Is p12 in radical of I? False
			Is p12 in radical of J? False
		\end{lstlisting}
		This guarentees the existence of $a,b\in SL(2,\mathbb{C})$ with $w(a,b) \in C\left(
		\begin{bmatrix}
			-1 & 1 \\
			0 & -1
		\end{bmatrix}
		\right)$. Also same for $C\left(
		\begin{bmatrix}
			1 & 1 \\
			0 & 1
		\end{bmatrix}
		\right)$.
		
		Hence we have 
		$$C\left(
		\begin{bmatrix}
			-1 & 1 \\
			0 & -1
		\end{bmatrix}
		\right) \bigcup C\left(
		\begin{bmatrix}
			1 & 1 \\
			0 & 1
		\end{bmatrix} \right)\subset \text{Im }w.$$
		To establish the surjectivity of the word map, it suffices to show that $-I \in \text{Im } w$. Since the word map $w$ is surjective on $SU(2)$, it follows that $-I$ lies in its image. Therefore, the word map is surjective on $SL(2,\mathbb{C})$.
	\end{proof}

	\begin{prop}\label{B5}
		The word map induced by the word $w(a,b)=[[a,b],a]$ is surjective on $SL(2,\mathbb{C})$.
	\end{prop}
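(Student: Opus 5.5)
By Lemma \ref{1}, every element of $SL(2,\mathbb{C})$ with trace $\neq\pm2$ lies in the image of $w(a,b)=[[a,b],a]$. By Remark \ref{Remark1}, the image is conjugation invariant. It therefore suffices to exhibit one preimage for each of four elements: $I$, $-I$, a representative of $C\left(\begin{bmatrix}1&1\\0&1\end{bmatrix}\right)$, and a representative of $C\left(\begin{bmatrix}-1&1\\0&-1\end{bmatrix}\right)$. For $[[a,b],a]$ these can be written down explicitly, so I would avoid the SageMath radical-membership argument used for the previous propositions.

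The identity is trivial: take $b=I$. For the two unipotent-type classes I would use diagonal and triangular matrices. Let $a=\operatorname{diag}(\lambda,\lambda^{-1})$ and $b=\begin{bmatrix}1&t\\0&1\end{bmatrix}$. Then $aba^{-1}=\begin{bmatrix}1&\lambda^{2}t\\0&1\end{bmatrix}$, so
\[
c:=[a,b]=\begin{bmatrix}1&(\lambda^{2}-1)t\\0&1\end{bmatrix},
\]
and
\[
[c,a]=c\,(ac^{-1}a^{-1})=\begin{bmatrix}1&(\lambda^{2}-1)t-\lambda^{2}(\lambda^{2}-1)t\\0&1\end{bmatrix}=\begin{bmatrix}1&-(\lambda^{2}-1)^{2}t\\0&1\end{bmatrix}.
\]
Choosing $\lambda^{2}\neq1$ and $t\neq0$ puts $w(a,b)$ in $C\left(\begin{bmatrix}1&1\\0&1\end{bmatrix}\right)$.

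The trace $-2$ cases need a non-triangular choice, since the entries above stay upper unitriangular. I would use the $SU(2)$ picture to get $-I$. Note that $[[a,b],a]=[u,v]$ with $u=[a,b]$ and $v=a$, so Lemma \ref{L2} applies. We have $p_{u}=k(x,y,z)$, $p_{v}=x$, and
\[
p_{uv}=T(ABA^{-1}B^{-1}A)=T(BA^{-1}B^{-1}A^{2})=T(A)T(BA^{-1}B^{-1}A)-T(BA^{-1}B^{-1})=xk-x.
\]
On $x=0$ this is $0$, so the system reduces to $x=0$ and $y^{2}+z^{2}=2$, which has the solution $(0,1,1)\in\tau$. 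Hence $w$ is surjective on $SU(2)$, so $-I\in\operatorname{Im}w$.

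For the class of $\begin{bmatrix}-1&1\\0&-1\end{bmatrix}$, I would deform this solution. Take $a$ of trace $0$; since $a^{2}=-I$ we get $a^{-1}=-a$, so $w=[a,b]\,a\,[a,b]^{-1}a^{-1}$, and with $c=[a,b]$ we have $w=-c\,a\,c^{-1}a$. A cleaner route is to work on the trace-$(-2)$ locus $\{T(w)=-2\}$, which contains the $SU(2)$ point giving $-I$. If that locus also contains a point where $w\neq -I$, then $w$ there is conjugate to $\begin{bmatrix}-1&1\\0&-1\end{bmatrix}$, because the only elements of trace $-2$ are $-I$ and that class. The set $\{w=-I\}$ is cut out by more equations than $\{T(w)=-2\}$, so it should be a proper closed subset of an irreducible component. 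To make this concrete, I would try $a=\begin{bmatrix}0&1\\-1&0\end{bmatrix}$ and $b=\begin{bmatrix}p&q\\r&s\end{bmatrix}$ general, compute $T(w)$ as a polynomial in $p,q,r,s$, and choose a point on $T(w)=-2$ with $w\neq-I$. By Lemma \ref{onto} the trace of $w$ restricted to such a family is non-constant, so the level set is nonempty, and its dimension exceeds that of $\{w=-I\}$.

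I expect this last step to be the main obstacle: showing that the trace-$(-2)$ fibre is not contained in $w^{-1}(-I)$. If the explicit computation is messy, I would fall back on the same SageMath test as in the previous propositions. With $a=\begin{bmatrix}0&x\\y&z\end{bmatrix}$ and general $b$, I would check that the off-diagonal entry $p_{12}$ of $w$ does not lie in the radical of the ideal $(\det a-1,\ \det b-1,\ T(w)+2)$.
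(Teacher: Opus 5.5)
\paragraph{What works.} Your reduction via Lemma~\ref{1} and Remark~\ref{Remark1} is correct, and so are the identity case, the unipotent computation $[[a,b],a]=\begin{bmatrix}1&-(\lambda^2-1)^2t\\0&1\end{bmatrix}$, and the $SU(2)$ argument for $-I$ (the system $x=0$, $y^2+z^2=2$ has the solution $(0,1,1)\in\tau$, then apply Lemma~\ref{L2}). Your $-I$ step differs from the paper's, which writes down a diagonal $a$ and the Weyl element $b$. For that pair $[a,b]$ is itself diagonal and commutes with $a$, so the paper's witness actually gives $I$. Your route is the sound one.

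\paragraph{The gap.} You give no proof for the class of $\begin{bmatrix}-1&1\\0&-1\end{bmatrix}$. Saying that $\{w=-I\}$ is ``cut out by more equations'' than the trace $-2$ fibre proves nothing. For the word $a^2$ from the introduction, $\{T(a^2)=-2\}=\{a^2=-I\}$ exactly, which is why that class is missed there. Lemma~\ref{onto} only shows the level set is nonempty; it says nothing about dimension.

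Your concrete family cannot work at all. Set $x=T(a)$ and $\kappa=T([a,b])$. Your own formula $T([a,b]a)=x\kappa-x$ and the commutator trace identity give
\[
T(w)=k(\kappa,x,x\kappa-x)=\kappa^2-2+x^2(2-\kappa).
\]
If $T(a)=0$, then $T(w)=-2$ forces $\kappa=0$. Now $c=[a,b]$, $a$ and $ca$ all have trace $0$, so by Cayley--Hamilton $c^2=a^2=(ca)^2=-I$. Hence $ca=-ac$ and $w=[c,a]=-I$. So for $a=\begin{bmatrix}0&1\\-1&0\end{bmatrix}$ and arbitrary $b$, the trace $-2$ fibre coincides with $w^{-1}(-I)$. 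Your fallback is the SageMath radical test the paper uses, but you have not carried it out.

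\paragraph{How to close it.} The same formula finishes the job. We have $w=-I$ if and only if $T(c)=T(a)=T(ca)=0$. For the forward direction, use $cac^{-1}=-a$ and $ac^{-1}a^{-1}=-c^{-1}$. Equivalently, $w=-I$ exactly when $x=\kappa=0$. So any pair with $\kappa^2=x^2(\kappa-2)$ and $x\neq0$ gives $T(w)=-2$ and $w\neq-I$. For example, $(x,y,z)=(3,2,4)$ gives $\kappa=3$. Concretely, $a=\begin{bmatrix}2&1\\1&1\end{bmatrix}$ and $b=\begin{bmatrix}1&0\\1&1\end{bmatrix}$ yield $[a,b]=\begin{bmatrix}3&-1\\1&0\end{bmatrix}$ and $w(a,b)=\begin{bmatrix}-13&24\\-6&11\end{bmatrix}$. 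This matrix has trace $-2$ and is not scalar, so it lies in the required class, with no computer algebra needed.
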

	
	\begin{proof}
		It is enough to show that all elements of $SL(2,\mathbb{C})$ with trace $\pm2$ is in the image of $w$. 
		Let $a=	\begin{bmatrix}
			t & 0 \\
			0 & -1/t
		\end{bmatrix}$ and $b=	\begin{bmatrix}
			0 & 1 \\
			-1 & 0
		\end{bmatrix}$. Then $w(a,b)=[[a,b],a]=	\begin{bmatrix}
			t^4 & 0 \\
			0 & 1/t^4
		\end{bmatrix}$. Thus, $\begin{bmatrix}
			-1 & 0 \\
			0 & -1
		\end{bmatrix}$ is in the image if we choose $t$ such that $t^4=-1$.\\
		Now, we will show that all elements in $C\left(
		\begin{bmatrix}
			-1 & 1 \\
			0 & -1
		\end{bmatrix}
		\right) \bigcup C\left(
		\begin{bmatrix}
			1 & 1 \\
			0 & 1
		\end{bmatrix}
		\right)$ is in the image of $w$. From Remark \ref{Remark1}, it enough to show there is an element in image from each conjugacy class. i.e., we have to show that there are matrices $a,b\in SL(2,\mathbb{C})$ such that, \begin{center}
			$w(a,b)=	\begin{bmatrix}
				p_{11} & p_{12} \\
				p_{21} & p_{22}
			\end{bmatrix}$ with $p_{11}+p_{22}=\pm2;  p_{12}\neq0$.
		\end{center}
		Take, $a=	\begin{bmatrix}
			x & 0 \\
			 y & z
		\end{bmatrix}$  and $b= 	\begin{bmatrix}
			1 & t \\
			0 & 1
		\end{bmatrix}$.

		
		Let $I\subset \mathbb{Q}[x,y,z,t]$ be the ideal generated by the polynomials $det(a)-1$, $T(A)-2$ and $J$ be the ideal generated by  polynomials $det(a)-1$, $T(A)+2$. These ideals corresponds to the affine subsets $S\subset SL(2,\mathbb{C})^{2}$ and $T\subset SL(2,\mathbb{C})^{2}$ respectively (with identification $(x,y,z,t)$ as $a=	\begin{bmatrix}
			x & 0 \\
			 y & z
		\end{bmatrix}$ and $b= 	\begin{bmatrix}
			1 & t \\
			0 & 1
		\end{bmatrix}$).\\

		Following SageMath computation shows that the polynomial $p_{12}$ is non-zero on $S$ and $T$. 
		\begin{lstlisting}[language=Python]
			R.<x, y, z, t> = PolynomialRing(QQ, 4)
			
			X = Matrix(R, 2, 2, [x, 0, y, z])
			Y = Matrix(R, 2, 2, [1, t, 0, 1])
			X1 = Matrix(R, 2, 2, [z, 0, -y, x])
			Y1 = Matrix(R, 2, 2, [1, -t, 0, 1])
			
			C = X * Y * X1 * Y1
			q11, q12 = C[0,0], C[0,1]
			q21, q22 = C[1,0], C[1,1]
			
			C1 = Matrix(R, 2, 2, [q22, -q12, -q21, q11])
			
			D = C * X * C1 * X1
			r11, r12 = D[0,0], D[0,1]
			r21, r22 = D[1,0], D[1,1]
			D1 = Matrix(R, 2, 2, [r22, -r12, -r21, r11])
			
			TD = D.trace()
			
			p12 = D[0,1]
			
			
			I = R.ideal([x*y-1, TD - 2])
			J = R.ideal([x*y-1, TD + 2])
			
			p12_in_radical_I = p12 in I.radical()
			p12_in_radical_J = p12 in J.radical()
			
			print("Is p12 in radical of I?", p12_in_radical_I)
			print("Is p12 in radical of J?", p12_in_radical_J)
			Is p12 in radical of I? False
			Is p12 in radical of J? False
		\end{lstlisting}
		
		This guarentees the existence of $a,b\in SL(2,\mathbb{C})$ with $w(a,b) \in C\left(
		\begin{bmatrix}
			-1 & 1 \\
			0 & -1
		\end{bmatrix}
		\right)$. Also same for $C\left(
		\begin{bmatrix}
			1 & 1 \\
			0 & 1
		\end{bmatrix}
		\right)$.
		
		Hence we have 
		$$C\left(
		\begin{bmatrix}
			-1 & 1 \\
			0 & -1
		\end{bmatrix}
		\right) \bigcup C\left(
		\begin{bmatrix}
			1 & 1 \\
			0 & 1
		\end{bmatrix} \right)\subset \text{Im }w.$$
		This proves the surjectivity of the word map corresponds to the given word .
		\end{proof}
	\textit{Proof of Theorem \ref{B}}. The Proposition \ref{B1} give surjectivity of the word $w_{1}$, Proposition \ref{B2} shows the surjectivity of the word $w_{2}$, Proposition \ref{B3} shows the surjectivity of the word $w_{3}$,  Proposition \ref{B4} shows the surjectivity of the word $w_{4}$, Proposition \ref{B5} shows the surjectivity of the word $w_{5}$.
  \subsection*{Acknowledgment:} 
		It is a pleasure to thank my advisor Anirban Bose for his suggestions and comments on the manuscripts. The author would also like to thank T. Bandman and Y. G. Zarhin for some helpful discussions and clarification of doubts. 

	\bibliographystyle{alpha}
	\bibliography{Word}

\end{document}